\newcommand{\intM}[1]{\int_{\Sigma}{{#1}d\mu}}
\newcommand{\llll}[1]{\left\|{#1}\right\|}
\newcommand{\oo}[1]{\left({#1}\right)}
\newcommand{\cc}[1]{\left[{#1}\right]}
\newcommand{\ben}{\begin{eqnarray*}}
\newcommand{\een}{\end{eqnarray*}}
\newcommand{\inner}[1]{\left\langle{#1}\right\rangle}
\newcommand{\cgam}{c_{\gamma}}
\newcommand{\vn}[1]{\lVert#1\rVert}
\newcommand{\ctild}{c_{\tilde{\gamma}}}
\newcommand{\cmss}{c_{\textrm{M\hskip-0.1mm S\hskip-0.1mm S}}}
\newcommand{\R}{\mathbb{R}}
\newcommand{\norm}[1]{\left|{#1}\right|}
\newcommand{\nablanorm}[3]{\left|\nabla_{\oo{#1}}{#2}\right|^{#3}}
\newcommand{\Ao}[1]{\left\|A^{o}\right\|_{2,\cc{\gamma>0}}^{#1}}
\newcommand{\A}[1]{\left\|A\right\|_{2,\cc{\gamma>0}}^{#1}}
\newcommand{\Kappa}{\alpha}
\newcommand{\snabla}{\overline{\nabla}}
\newcommand{\sDelta}{\overline{\Delta}}
\theoremstyle{plain}
\newtheorem{theorem}{Theorem}
\newtheorem{corollary}[theorem]{Corollary}
\newtheorem{lemma}[theorem]{Lemma}
\newtheorem{proposition}[theorem]{Proposition}
\theoremstyle{remark}
\newtheorem{rmk}{Remark}
\begin{document}

\date{****Received 6 December 2013; received in final form *******} 

\title{The geometric triharmonic heat flow of immersed surfaces near spheres}

\author[J. McCoy]{James McCoy}
\address{Institute for Mathematics and its Applications, School of Mathematics and Applied Statistics\\
University of Wollongong\\
Northfields Ave, Wollongong, NSW $2500$,\\
Australia}
\email{jamesm@uow.edu.au}

\author[S. Parkins]{Scott Parkins}
\address{Institute for Mathematics and its Applications, School of Mathematics and Applied Statistics\\
University of Wollongong\\
Northfields Ave, Wollongong, NSW $2500$,\\
Australia}
\email{srp854@uow.edu.au}

\author[G. Wheeler]{Glen Wheeler}
\address{Institute for Mathematics and its Applications, School of Mathematics and Applied Statistics\\
University of Wollongong\\
Northfields Ave, Wollongong, NSW $2500$,\\
Australia}
\email{glenw@uow.edu.au}

\thanks{
The research of the second author was supported by an Australian
Postgraduate Award.  The research of the first and third authors was supported
by Discovery Project DP120100097 of the Australian Research Council.
Results from this paper will appear in the PhD thesis of the second author.
}

\subjclass{53C44}

\begin{abstract}
We consider closed immersed surfaces in $\mathbb{R}^{3}$ evolving by the
geometric triharmonic heat flow.
Using local energy estimates, we prove interior estimates and a positive
absolute lower bound on the lifespan of solutions depending solely on the local
concentration of curvature of the initial immersion in $L^2$.
We further use an $\varepsilon$-regularity type result to prove a gap lemma for
stationary solutions.
Using a monotonicity argument, we then prove that a blowup of the flow
approaching a singular time is asymptotic to a non-umbilic embedded stationary
surface.
This allows us to conclude that any solution with initial $L^2$-norm of the
tracefree curvature tensor smaller than an absolute positive constant converges
exponentially fast to a round sphere with radius equal to
$\sqrt[3]{3V_0/4\pi}$, where $V_0$ denotes the signed enclosed volume of the
initial data.
\end{abstract}

\maketitle

\begin{section}{Introduction}

Let $f:\Sigma\times\left[0,T\right)\rightarrow\mathbb{R}^{3}$, $T>0$, be a
one-parameter family of compact immersed surfaces
$f\oo{\cdot,t}=f_{t}:\Sigma\rightarrow f_{t}\oo{\Sigma}=\Sigma_{t}$.
We say $f$ is evolving under the geometric triharmonic heat flow if it satisfies the
following equation
\begin{equation}
\frac{\partial}{\partial t}f=-(\Delta^{2}H)\nu\,.\label{trilaplacianIntro1}\tag{GTHF}
\end{equation}
with given smooth initial surface $f\left( \cdot, 0 \right) = f_0$.
Above $\Delta$ and $H$ are respectively the Laplace-Beltrami operator and mean
curvature of the surface $\Sigma_{t}$, and $\nu$ is the outer unit normal.
Further notation and conventions are set out in Section 2.

The mean curvature vector $\vec{H}$ satisfies $\Delta f = \vec{H} = -H\nu$.
As $\vec{H}$ is a section of the normal bundle, it is natural to apply to this
the induced Laplacian in the normal bundle $\Delta^\perp$.
In this case one recovers $(\Delta^2 H)\nu = \Delta^\perp\Delta^\perp\Delta f$.
This is natural from a geometric perspective as well as from the perspective of
curvature flow: only normal terms in the speed of the flow affect geometric
quantities which are invariant under the diffeomorphism group, and so any
natural geometric operator should have image in the normal bundle.
One may however wish to interpret $\vec{H}$ as a vector in $\R^3$ and instead
apply the rough Laplacian $\Delta_R$ to it.
In this case one recovers a lengthy expression for $\Delta_R\Delta_R\Delta f$
which has leading order term $(\Delta^2 H)\nu$ with many other terms
contributing in tangential and normal directions.
We have termed \eqref{trilaplacianIntro1} the geometric triharmonic heat flow
in order to distinguish it from the possible second interpretation.

As \eqref{trilaplacianIntro1} is a sixth-order quasilinear degenerate parabolic
system of partial differential equations, the local existence of a solution is
standard (see, eg. \cite{Mantegazza1,Mantegazza2}) for regular enough initial
data.

\begin{theorem}
\label{TMste}
Suppose $\Sigma$ is a complete, compact 2-manifold without boundary.
Let $f_0:\Sigma\rightarrow\R^3$ be a $C^{6,\alpha}$ isometric immersion.
There exists a maximal $T>0$ and a corresponding unique one-parameter family of smooth isometric immersions $f:\Sigma\times(0,T)\rightarrow\R^3$ satisfying \eqref{trilaplacianIntro1} and
\[
f(\cdot,t) \rightarrow f_0(\cdot)\quad\text{  as  }\quad t\searrow0\,,
\]
locally smoothly in the $C^{6,\alpha}$ topology.
\end{theorem}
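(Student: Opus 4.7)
My plan is to reduce \eqref{trilaplacianIntro1} to a strictly forward parabolic scalar quasilinear equation of order six on $\Sigma$, and then invoke the standard short-time theory for higher-order quasilinear parabolic equations developed systematically for geometric flows in \cite{Mantegazza1,Mantegazza2}. To this end I would parametrize nearby immersions as normal graphs over $f_{0}$, writing
\[
f\oo{x,t}=f_{0}\oo{x}+u\oo{x,t}\,\nu_{0}\oo{x},\qquad u\oo{\cdot,0}\equiv 0,
\]
with $u$ a scalar height function on $\Sigma$ depending on $t$. For $\vn{u}_{C^{2}\oo{\Sigma}}$ sufficiently small, the induced metric, second fundamental form, unit normal $\nu\oo{u}$ and mean curvature $H\oo{u}$ are smooth functions of $\oo{u,\nabla u,\nabla^{2}u}$, and $\inner{\nu\oo{u},\nu_{0}}$ stays positive.

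Since the content of \eqref{trilaplacianIntro1} is invariant under tangential reparametrizations, pairing both sides with $\nu_{0}$ and using $\partial_{t}f=\oo{\partial_{t}u}\nu_{0}$ yields the scalar equation
\[
\partial_{t}u = -\frac{\Delta^{2}H\oo{u}}{\inner{\nu\oo{u},\nu_{0}}}.
\]
The standard linearisation $H\oo{u}=H_{0}-\oo{\Delta_{g_{0}}+\vn{A_{0}}^{2}}u+O\oo{u^{2}}$, together with $\inner{\nu\oo{0},\nu_{0}}=1$, shows that the principal part at $u\equiv 0$ is $\partial_{t}u=\Delta_{g_{0}}^{3}u$, and since $\Delta_{g_{0}}^{3}$ has principal symbol $-\norm{\xi}^{6}$, the equation is strictly forward parabolic of order six --- both at $u\equiv 0$ and throughout a $C^{6,\alpha}$-neighbourhood of it.

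With this reduction in hand, standard Schauder/fixed-point theory for higher-order quasilinear parabolic equations (cf.\ the framework set out in \cite{Mantegazza1,Mantegazza2} for geometric flows of arbitrary even order) produces a unique $C^{6,\alpha}$ solution $u$ on $\Sigma\times[0,T_{1}]$ for some $T_{1}>0$; parabolic bootstrapping then upgrades $u$ to be smooth for $t>0$ and gives $f\oo{\cdot,t}\to f_{0}$ locally smoothly in $C^{6,\alpha}$ as $t\searrow 0$. Unwinding the graph representation returns the desired one-parameter family of smooth isometric immersions, and a standard continuation argument --- extending while the $C^{6,\alpha}$-norm of $u$ remains controlled and then re-applying local existence --- yields a unique maximal interval of existence $\Sigma\times(0,T)$, $T\in(0,\infty]$. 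Uniqueness of $f$ as a geometric flow, modulo tangential reparametrizations, follows from uniqueness of $u$.

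The main technical point I would expect to work through carefully is the verification that the graph-reduced scalar equation is strictly parabolic with the correct sign for forward evolution: the sign conventions for $\nu$, $H$ and $\Delta$ all intervene, and on a curved $\Sigma$ the linearisation acquires additional curvature terms that must be handled as lower-order perturbations. Once this principal-symbol calculation is settled there is no further obstacle, and short-time existence, uniqueness and smoothness all follow from the standard linear theory in exactly the form already codified in \cite{Mantegazza1,Mantegazza2}.
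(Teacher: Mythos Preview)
Your proposal is correct and is precisely the standard argument the paper has in mind: the paper does not give its own proof of this theorem but simply states that local existence is ``standard (see, eg.\ \cite{Mantegazza1,Mantegazza2}) for regular enough initial data.'' Your normal-graph reduction to a strictly parabolic sixth-order scalar equation, principal-symbol check, and appeal to the Mantegazza framework is exactly the content behind that citation, so there is nothing to compare --- you have supplied the details the paper omits.
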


In this paper we will always assume that the initial data $f_0$ is smooth, and
so then the flow generated by $f_0$ is $f:\Sigma\times[0,T)\rightarrow\R^3$,
which is a smooth family on the half-open interval $[0,T)$.

Applications of equations involving a thrice-iterated Laplacian are growing,
and include for example the modelling of ulcers \cite{UW05}, computer graphics
\cite{BWH95,U04}, and interactive design \cite{KUW04}.  In each of these
applications it is important that the Laplacian considered possesses some
inherent geometric invariances, with the natural candidate being the
Laplace-Beltrami operator.
We hope in future work to generalise equation \eqref{trilaplacianIntro1} to
include lower order terms; such sixth order equations appear as the phase field
crystal equation proposed as a model for microstructure evolution of two-phase
systems on atomic length and diffusive time scales (see \cite{GomezNogueira}
and the references contained therein) and in surface modelling \cite{LiuXu,
Tosun}.  

In Section 6 we study properly immersed surfaces which are not necessarily closed.
Our main result is the following tracefree curvature estimate, which holds quite generally.
In particular, the immersion does not need to satisfy $\Delta^2H = 0$.

\begin{theorem}
\label{Tracefreetheorem1}
Let $x\in\R^3$, $\rho>0$, and $f:\Sigma\rightarrow\mathbb{R}^{3}$ be a locally $W^{5,2}$ immersion satisfying
\begin{equation}
\int_{f^{-1}(B_{2\rho}(x))}{\norm{A^{o}}^{2}\,d\mu}\leq\varepsilon_{0}\label{TracefreesmallnessBALL}
\end{equation}
for $\varepsilon_{0}>0$ sufficiently small.
Then there is a universal constant $c>0$ such that
\begin{equation*}
\llll{A^{o}}_{\infty,f^{-1}(B_\rho(x))}^{6}
\leq c\vn{A^o}^4_{2,f^{-1}(B_{2\rho}(x))}
  \Big(\vn{\nabla\Delta H}^2_{2,f^{-1}(B_{2\rho}(x))} + \rho^{-6}\Big)\,.
\end{equation*}
\end{theorem}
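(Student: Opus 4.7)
The plan is to combine a Simons-type identity for the tracefree second fundamental form with a Michael-Simon-Sobolev inequality, absorbing cubic curvature terms by means of the smallness hypothesis \eqref{TracefreesmallnessBALL}. First, I would use the standard identity
$\Delta A^o_{ij} = (\nabla^2 H)^o_{ij} + A\ast A\ast A^o$
which, upon one further differentiation and use of the Codazzi equations, yields $\nabla\Delta A^o = \nabla^3 H + A\ast A\ast \nabla A$. This is the mechanism that converts $L^2$ control of $\nabla\Delta H$ into $L^2$ control of $\nabla\Delta A^o$, modulo cubic-in-curvature error terms.

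Next, I would fix a cutoff $\gamma = \tilde\gamma\circ f$ where $\tilde\gamma\in C^\infty_c(\R^3)$ is supported in $B_{2\rho}(x)$ and equal to one on $B_\rho(x)$, and form the weighted energy $\int|A^o|^{2k}|\nabla A^o|^2 \gamma^s\,d\mu$ for a suitably chosen $k$. Multiplying the above Simons identity by $|A^o|^{2k}A^o\gamma^s$ and integrating by parts repeatedly to raise the derivative count on $H$ up to $\nabla\Delta H$, I expect to arrive at a weighted estimate of the form
\begin{equation*}
\int |A^o|^{2k}|\nabla A^o|^2 \gamma^s\,d\mu
\leq
c\int |\nabla\Delta H|^2\gamma^{s'}\,d\mu
+ (\textrm{cubic errors})
+ c\rho^{-\sigma}\mu(\textrm{supp}\,\gamma).
\end{equation*}
The cubic error terms are then absorbed by combining Michael-Simon-Sobolev with the smallness of $\|A^o\|_{2,f^{-1}(B_{2\rho}(x))}$: each such term splits as an $L^2$ factor involving $A^o$ times a remainder, and MSS turns the $L^2$ factor into something controlled by $\|A^o\|_{2,f^{-1}(B_{2\rho}(x))}^2 \leq \varepsilon_0$, small enough to move to the left-hand side.

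To pass from weighted $L^p$ to $L^\infty$, I would exploit that $\Sigma$ is two-dimensional, so $W^{2,p}$ embeds into $L^\infty$ for any $p>1$. I would apply Michael-Simon-Sobolev twice, first raising to $L^q$ for arbitrary $q<\infty$ on products like $|A^o|^{k+1}\gamma^{s/2}$, then again on a suitable power such as $|A^o|^3 \gamma^m$ to reach pointwise control, tracking constants to enforce the scaling balance $[\vn{A^o}^6_\infty] = [\vn{A^o}^4_{2}\vn{\nabla\Delta H}^2_{2}] = [\vn{A^o}^4_{2}\rho^{-6}]$. The factor $\vn{A^o}^4_{2}$ on the right side of the target inequality arises naturally from placing two copies of $|A^o|$ inside the test function (so that the final $L^\infty$ power is $6$) and using the $L^2$ smallness on the extra $|A^o|^2$ outside the $\nabla\Delta H$ pairing.

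The main obstacle is bookkeeping. Simons' identity and the successive integrations by parts spawn a large number of cubic and higher-order curvature correction terms, and derivatives of $\gamma$ generate boundary contributions that must be controlled with care; one needs to verify that \emph{every} such term carries either a factor of $\varepsilon_0$ (so that it can be absorbed on the left) or an appropriate power of $\rho^{-1}$ (so that it collapses into the $\rho^{-6}$ term on the right). The possible need to separate $H$ from $A^o$ (via $A=A^o+\tfrac12 Hg$) without introducing uncontrolled $H$-dependence is what requires the MSS inequality to be invoked repeatedly rather than naively iterating a single Sobolev embedding.
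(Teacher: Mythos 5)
Your overall strategy (Simons identity for $A^o$, Michael--Simon Sobolev inequality, absorption of cubic curvature terms via the smallness hypothesis, weighted cutoffs, and a Sobolev step to pass to $L^\infty$) is the same one the paper follows. The specific chain the paper uses is: Lemma \ref{TracefreeLemma1} gives $\Delta A^o = S^o(\nabla_{(2)}H) + \tfrac12 H^2 A^o - |A^o|^2 A^o$; Lemmas \ref{Tracefreelemma2}--\ref{Tracefreelemma6} are a sequence of plain $L^2$ energy estimates (no weights $|A^o|^{2k}$, all with fixed powers of $\gamma$) that iteratively control
\[
\intM{|\nabla A^o|^2\gamma^2}, \quad \intM{|\nabla_{(2)}H|^2\gamma^4}, \quad
\intM{|A^o|^4\gamma^2}, \quad \intM{|\nabla_{(2)}A^o|^2\gamma^4}, \quad \intM{|A^o|^2 H^4\gamma^4}
\]
in terms of $\vn{A^o}_{2}^{2/3}(\intM{|\nabla\Delta H|^2\gamma^6})^{2/3}$ (and lower order, with $\cgam$-weights); then Theorem \ref{Tracefreetheorem1OLD} applies the Kuwert--Sch\"atzle multiplicative interpolation inequality (Theorem \ref{EvolutionTheorem1}) to $\Psi=|A^o|\gamma^{3/2}$ in the form $\vn{\Psi}_\infty^6 \le C\vn{\Psi}_2^2(\vn{\nabla\Psi}_4^4 + \vn{\Psi H}_4^4)$, and controls the two $L^4$ terms on the right by one more round of Michael--Simon together with Lemmas \ref{Tracefreelemma2}--\ref{Tracefreelemma6}.

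The genuine gap in your proposal is the final passage to $L^\infty$. You write that you would ``apply Michael--Simon--Sobolev twice\ldots to reach pointwise control'' and invoke the abstract embedding $W^{2,p}\hookrightarrow L^\infty$ for $p>1$ in dimension two. Neither of these yields an $L^\infty$ bound with a \emph{universal} constant and the required multiplicative structure on an arbitrary immersed surface. Michael--Simon in dimension two is an $L^1\to L^2$ type inequality; iterating it raises the Lebesgue exponent but does not terminate at $p=\infty$ without a full Moser iteration argument, and the abstract $W^{2,p}\hookrightarrow C^0$ embedding has constants that depend on the intrinsic geometry (volume non-collapse, injectivity radius), which is exactly the information unavailable here. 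The ingredient you are missing is the multiplicative interpolation inequality of Kuwert--Sch\"atzle (Theorem \ref{EvolutionTheorem1}), which is the tailored replacement: its $\vn{Hu}_p$ term absorbs the geometric dependence and it immediately produces the power structure $\vn{\Psi}_\infty^6 \lesssim \vn{\Psi}_2^2(\vn{\nabla\Psi}_4^4 + \vn{\Psi H}_4^4)$, which is precisely where the factor $\vn{A^o}_2^4$ (namely $\vn{\Psi}_2^2$ times one extra $\vn{A^o}_2^2$ from the MSS control of the $L^4$ terms) originates. Without this tool, neither the exponent $6$ on the left nor the $\vn{A^o}_2^4$ prefactor comes out of the analysis cleanly.

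Two further, smaller remarks. First, your Moser-style family of weighted energies $\intM{|A^o|^{2k}|\nabla A^o|^2\gamma^s}$ is unnecessary: the paper closes the argument with $k=0$ throughout, and the extra powers of $|A^o|$ would only complicate the absorption bookkeeping. Second, you differentiate the Simons identity once more to reach $\nabla\Delta A^o = \nabla_{(3)}H + \cdots$, but this third-order quantity is never needed; the highest order controlled in the paper is $\intM{|\nabla_{(2)}A^o|^2\gamma^4}$ (Lemma \ref{Tracefreelemma5}), which suffices because the Kuwert--Sch\"atzle step only demands $L^4$ of $\nabla\Psi$, i.e.\ two covariant derivatives of $A^o$ after one application of Michael--Simon.
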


This has as an immediate corollary the following higher-order analogue of
$\varepsilon$-regularity (see \cite{Kuwert1,Schoen,Wheeler2} for the
corresponding result for the Willmore operator, minimal surfaces, and the
surface diffusion operator respectively).

\begin{corollary}
\label{CYepsreg}
Let $f:\Sigma\rightarrow\mathbb{R}^{3}$ be a locally $W^{5,2}$ closed immersion
satisfying $\Delta^2H = 0$ weakly.
Suppose that for $x\in\R^3$, $\rho>0$, the local smallness condition
\eqref{TracefreesmallnessBALL} is satisfied for $\varepsilon_0>0$ sufficiently
small.
Then there is an absolute constant $c>0$ such that
\begin{equation*}
\llll{A^{o}}_{\infty,f^{-1}(B_\rho(x))}^{6}
\leq c\,\varepsilon^2\rho^{-6}\,.
\end{equation*}
\end{corollary}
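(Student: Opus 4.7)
The plan is to deduce the corollary directly from Theorem~\ref{Tracefreetheorem1}. The only extra ingredient required is the observation that under the hypothesis $\Delta^2H=0$ weakly on a closed surface $\Sigma$, the term $\vn{\nabla\Delta H}^2_{2,f^{-1}(B_{2\rho}(x))}$ appearing on the right-hand side of the theorem vanishes identically.

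To see this, I would use that since $f$ is a locally $W^{5,2}$ immersion of the compact manifold $\Sigma$, we have $H\in W^{3,2}(\Sigma)$ globally, which is ample regularity for $H$ itself to serve as a test function in the weak formulation $\int_\Sigma \Delta H\cdot\Delta\phi\,d\mu = 0$. Taking $\phi = H$ (by density from smooth test functions, if one insists) and using that $\Sigma$ has no boundary, two integrations by parts yield
\[
0 = \int_\Sigma \Delta H\cdot\Delta H\,d\mu = \int_\Sigma \norm{\Delta H}^2\,d\mu\,,
\]
so $\Delta H\equiv 0$ almost everywhere on $\Sigma$, and consequently $\nabla\Delta H\equiv 0$ as an $L^2$ section on all of $\Sigma$, in particular on $f^{-1}(B_{2\rho}(x))$.

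Substituting this into Theorem~\ref{Tracefreetheorem1} collapses the conclusion to
\[
\llll{A^{o}}_{\infty,f^{-1}(B_\rho(x))}^{6} \leq c\,\vn{A^o}^4_{2,f^{-1}(B_{2\rho}(x))}\,\rho^{-6}\,,
\]
and the smallness hypothesis \eqref{TracefreesmallnessBALL} immediately gives $\vn{A^o}^4_{2,f^{-1}(B_{2\rho}(x))}\leq \varepsilon_0^2$, yielding the stated bound (with $\varepsilon$ in the corollary identified with $\varepsilon_0$). There is essentially no obstacle to overcome here: all of the analytic effort is absorbed into Theorem~\ref{Tracefreetheorem1}, and the corollary is its expected specialisation to the case where the ambient equation $\Delta^2 H=0$ holds globally. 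The only mild technicality is justifying the test-function argument at $W^{5,2}$ regularity, but this is standard on a closed manifold.
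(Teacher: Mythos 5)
Your proposal is correct and is essentially the paper's intended route: the paper calls Corollary~\ref{CYepsreg} an ``immediate corollary'' of Theorem~\ref{Tracefreetheorem1}, and the same integration-by-parts observation (that $\Delta^2H=0$ weakly on a closed surface forces $\nabla\Delta H\equiv 0$ in $L^2$) is the one the paper itself uses in the compact case of the Gap Lemma, merely phrased there as $\int_\Sigma|\nabla\Delta H|^2\,d\mu=-\int_\Sigma\Delta H\cdot\Delta^2H\,d\mu=0$ rather than via $\phi=H$. The two test-function choices are equivalent at the stated $W^{5,2}$ regularity, and your substitution into Theorem~\ref{Tracefreetheorem1} together with \eqref{TracefreesmallnessBALL} gives the claimed bound.
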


This estimate is a key first step in a larger regularity program for
weak solutions, improving the a-priori regularity assumption dramatically under
the natural assumption that the $L^2$-norm of the tracefree curvature tensor
$A^o$ is globally bounded.

If the $L^2$-norm of $A^o$ is in fact globally small, then we may apply again
Theorem \ref{Tracefreetheorem1} to obtain the following much stronger result
than Corollary \ref{CYepsreg} above.
It is proven in Section 8.

\begin{theorem}[Gap Lemma]
\label{GapTheorem1}
Suppose $f:\Sigma\rightarrow\mathbb{R}^{3}$ is a proper locally $C^6$ immersion with $\Delta^{2}H\equiv0$.
Then, if $f$ is closed and satisfies
\begin{equation}
\label{GLA1}
\intM{\norm{A^{o}}^{2}}<\varepsilon_{0}<8\pi
\end{equation}
for $\varepsilon_{0}>0$ sufficiently small, there exist $x\in\R^3$ and $\rho>0$ such that
\begin{equation*}
f\oo{\Sigma}=\mathbb{S}^{2}_{\rho}(x),
\end{equation*}
where $\mathbb{S}^{2}_\rho(x)$ denotes a standard round sphere in $\mathbb{R}^{3}$ with radius $\rho$ and centre $x\in\R^3$.
If $f$ is not closed, then under assumption \eqref{GLA1} we instead conclude that
\begin{equation*}
f\oo{\Sigma}=\mathbb{P}^{2},
\end{equation*}
where $\mathbb{P}^{2}$ denotes a standard flat plane in $\mathbb{R}^{3}$.
\end{theorem}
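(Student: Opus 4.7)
My plan is to reduce the statement to showing that $A^o$ vanishes identically on $\Sigma$, and then to invoke the classical classification of umbilic surfaces in $\mathbb{R}^{3}$.

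First I would handle the closed case directly via Corollary \ref{CYepsreg}. Since $\Sigma$ is compact and $f$ is $C^{6}$, $f(\Sigma)$ is bounded, so I may pick $x \in \R^{3}$ and $\rho$ large enough that $f(\Sigma) \subset B_{\rho}(x)$. For any such $\rho$, the hypothesis \eqref{TracefreesmallnessBALL} reduces to the global smallness \eqref{GLA1}, so the corollary yields
\[
\vn{A^{o}}_{\infty,\Sigma}^{6} \le c\,\varepsilon_{0}^{2}\,\rho^{-6}.
\]
Sending $\rho \to \infty$ forces $A^{o} \equiv 0$.

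For the non-closed case Corollary \ref{CYepsreg} does not apply directly, so I would return to Theorem \ref{Tracefreetheorem1}. Fix any $p \in \Sigma$, set $x = f(p)$, and apply the theorem at scale $\rho$. The smallness \eqref{GLA1} controls the first factor, giving
\[
\norm{A^{o}(p)}^{6} \le c\,\varepsilon_{0}^{2}\,\bigl(\vn{\nabla\Delta H}_{2,f^{-1}(B_{2\rho}(x))}^{2} + \rho^{-6}\bigr).
\]
The substantive step is to prove $\nabla\Delta H \equiv 0$ on $\Sigma$. For this I would test the pointwise identity $\Delta^{2} H \equiv 0$ against $(\gamma \circ f)^{2k}\Delta H$ for a smooth cutoff $\gamma$ on $\R^{3}$ of radius $\rho$; properness makes $f^{-1}(\operatorname{supp}\gamma)$ compact and legitimises integration by parts without boundary terms. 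After absorbing cross-terms one obtains
\[
\int_{\Sigma} (\gamma \circ f)^{2k} \norm{\nabla\Delta H}^{2} d\mu \le C\int_{\Sigma} (\gamma\circ f)^{2k-2}\norm{\nabla(\gamma\circ f)}^{2} \norm{\Delta H}^{2} d\mu.
\]
Iterating this trick to control $\vn{\Delta H}_{2}^{2}$ by cutoff derivatives hitting successively lower-order quantities (using the $L^{\infty}$-bound on $A^{o}$ from Theorem \ref{Tracefreetheorem1} to tame the commutators), and sending $\rho \to \infty$ so that $\norm{\nabla(\gamma\circ f)} = O(\rho^{-1})$, forces $\nabla\Delta H \equiv 0$. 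Substituting back and taking $\rho \to \infty$ yields $A^{o}(p) = 0$ for every $p$.

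With $A^{o} \equiv 0$, Codazzi--Mainardi implies $\nabla H \equiv 0$, so $H$ is constant on each connected component of $\Sigma$. In the closed case, this constant is nonzero and the immersion is therefore a round sphere of radius $2/H$. In the non-closed case, properness rules out $H \ne 0$: otherwise $f(\Sigma)$ would lie in a bounded sphere, forcing $\Sigma$ to be compact. Hence $A \equiv 0$, the image lies in a single affine plane $P$, and properness together with connectedness (a proper immersion between connected surfaces of the same dimension is a covering map onto an open-and-closed subset of the simply-connected target $P$) identifies $f(\Sigma)$ with $P = \mathbb{P}^{2}$.

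The principal difficulty is the non-closed case, specifically the iterated cutoff integration-by-parts scheme needed to show $\nabla\Delta H \equiv 0$. One must arrange that each step sheds a power of $\rho^{-1}$ and that the derivatives of $\gamma$ produced by the Leibniz rule eventually fall on quantities that are either a priori bounded (via Theorem \ref{Tracefreetheorem1}) or controlled by the global smallness \eqref{GLA1}, so that the $\rho \to \infty$ exhaustion can be closed without circular dependence on the very bound we are trying to establish.
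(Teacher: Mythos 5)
Your overall architecture — reduce to $A^o\equiv0$, then classify umbilics, splitting into $H\neq0$ (sphere, ruled out for noncompact by properness) and $H\equiv0$ (plane, covering-map argument) — matches the paper's and the endgame is fine. The closed case is also fine, though roundabout: the paper's own argument is simpler, namely a single global integration by parts $\int_\Sigma |\nabla\Delta H|^2\,d\mu = -\int_\Sigma \Delta H\,\Delta^2 H\,d\mu = 0$ followed by Theorem \ref{Tracefreetheorem1OLD} with $\gamma\equiv1$, whereas you route through Corollary \ref{CYepsreg}, whose proof itself already contains the cutoff argument you would need for the noncompact case.

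The genuine gap is in the non-closed case. Your single integration by parts correctly yields $\int\gamma^{6}|\nabla\Delta H|^{2}\le c\,c_{\gamma}^{2}\int\gamma^{4}|\Delta H|^{2}$ (cf.\ \eqref{GapTheorem1,2}), but your plan for then controlling $\int\gamma^4|\Delta H|^2$ by ``iterating this trick'' towards lower-order quantities does not close. Iterating the same cutoff IBP downward terminates at quantities like $\int_{[\gamma>0]}|H|^2\,d\mu$ or $\int_{[\gamma>0]}|A|^2\,d\mu$, and neither of these is controlled by the hypothesis \eqref{GLA1}: only $\int|A^o|^2$ is small, while $\int H^2$ on a noncompact surface has no Gauss--Bonnet bound and $\int|A|^2$ is simply not assumed finite. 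Moreover, your suggestion to ``tame the commutators'' with the $L^\infty$ bound on $A^o$ from Theorem \ref{Tracefreetheorem1} is circular, since that $L^\infty$ bound itself contains $\|\nabla\Delta H\|_{2}$, exactly the quantity you are trying to drive to zero — a risk you flag but do not resolve. What actually closes the estimate in the paper is not downward iteration but the multiplicative Sobolev inequality of Lemma \ref{Tracefreelemma5} (built on Lemma \ref{Tracefreelemma2}), which bounds $\int\gamma^{4}|\nabla_{(2)}A^o|^{2}$ (hence $\int\gamma^{4}|\Delta H|^{2}$ via \eqref{Prelimiaries2}) by $c\,\|A^o\|_{2}^{2/3}\bigl(\int\gamma^{6}|\nabla\Delta H|^{2}\bigr)^{2/3}+c\,c_{\gamma}^{4}\|A^o\|_{2}^{2}$. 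Substituting this back produces $\int\gamma^{6}|\nabla\Delta H|^{2}$ on the right with a \emph{sublinear} power and a small coefficient $\|A^o\|_{2}^{2/3}$, so Young's inequality absorbs it and yields \eqref{GapTheorem1,4}. This interpolation step, with the smallness of $\|A^o\|_{2}$ doing the absorbing, is the missing idea in your proposal; without it, the cutoff scheme alone does not terminate.
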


Gap Lemmata are known in a variety of contexts: for the Willmore operator
\cite{Kuwert1}, the surface diffusion operator \cite{Wheeler2}, and a family of
fourth-order geometric operators \cite{Wheeler4}.
We expect Theorem \ref{Tracefreetheorem1} to enjoy further applications.

Using $\text{Vol}\oo{\Sigma_{t}}$ and $\norm{\Sigma_{t}}$ to denote the signed
enclosed volume and surface area respectively of $\Sigma_{t}$ in
$\mathbb{R}^{3}$, we compute that under the flow \eqref{trilaplacianIntro1}
\begin{equation}
\label{EQvolevo}
\frac{d}{dt}\text{Vol}\oo{\Sigma_{t}}=-\intM{\Delta^{2}H}=0
\end{equation}
and
\begin{equation}
\label{EQareaevo}
\frac{d}{dt}\norm{\Sigma_{t}}=-\intM{H\Delta^{2}H}=-\intM{\norm{\Delta H}^{2}}\leq0 \mbox{.}
\end{equation}
From \eqref{EQvolevo}, \eqref{EQareaevo}, the flow \eqref{trilaplacianIntro1}
is isoperimetrically natural.
The flow may develop singularities while improving the isoperimetric ratio,
manifested as curvature singularities.
Our first main parabolic result is the following characterisation of the
singular time.

\begin{theorem}[Lifespan Theorem]\label{LifespanTheorem}
Suppose $f:\Sigma\times\left[0, T\right)\longrightarrow\mathbb{R}^{n}$
satisfies \eqref{trilaplacianIntro1} with smooth initial data $f_0$.
Then there are constants $\rho>0,\varepsilon_{0}>0$ and $C<\infty$ such that if
$\rho$ is chosen with 
\begin{equation}
\int_{f^{-1}\oo{B_{\rho}\oo{x} }}{\norm{A}^{2}d\mu}\Bigg|_{t=0}=\varepsilon\oo{x}\leq\varepsilon_{0}\text{  for any }x\in\mathbb{R}^{3},\label{Lifespansmallness}
\end{equation}
then the maximal time $T$ of smooth existence for the flow satisfies
\begin{equation}
T\geq\frac{1}{C}\rho^{6},\label{lifespan1}
\end{equation}
and we have the estimate
\begin{equation}
\int_{f^{-1}\oo{B_{\rho}\oo{x} }}{\norm{A}^{2}d\mu}\leq C\varepsilon_{0}\text{  for }0\leq t\leq\frac{1}{C}\rho^{6}.\label{lifespan2}
\end{equation}
\end{theorem}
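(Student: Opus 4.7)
The plan is to follow the Struwe/Ecker--Huisken paradigm, adapted to the sixth order setting, by combining a localised energy inequality for $|A|^2$ with a continuity argument. Because the smallness is in $|A|^2$ and $|A^o|^2\leq|A|^2$, the hypothesis \eqref{Lifespansmallness} in particular gives local smallness of the tracefree curvature, so Theorem \ref{Tracefreetheorem1} is available throughout.

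First I would compute the pointwise evolution equation for $|A|^2$ under \eqref{trilaplacianIntro1}. Because the speed is $-(\Delta^2 H)\nu$, the Simons-type identities give a leading term of the form $-2\langle A,\nabla^2\Delta^2H\rangle$ plus cubic-in-$A$ corrections arising from the commutators; all lower order pieces can be bounded schematically by sums of the form $\nabla^{(i)}A\ast\nabla^{(j)}A\ast\nabla^{(k)}A\ast\nabla^{(l)}A$ with $i+j+k+l\leq 4$, and so on up to six $A$'s with no derivatives. Next I would multiply by $\gamma^s$, where $\gamma$ is a standard cutoff supported in $f^{-1}(B_{2\rho}(x))$, identically $1$ on $f^{-1}(B_\rho(x))$, with $|\snabla^{(k)}\gamma|\leq c\rho^{-k}$, and $s$ chosen large (say $s\geq 2k$ for all $k$ that appear after integration by parts). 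Integrating by parts three times produces the good term $\int|\nabla^{3}A|^2\gamma^s\,d\mu$, plus tangential/normal commutator terms, plus boundary-type terms with derivatives of $\gamma$ carrying negative powers of $\rho$.

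The main computational step is then to absorb the nonlinear error terms into the good term. For this I would apply the Michael--Simon Sobolev inequality on the support of $\gamma$, combined with the interpolation inequality
\begin{equation*}
\vn{\nabla^{(k)} A}_{p,[\gamma>0]}\leq c\,\vn{A}_{2,[\gamma>0]}^{1-\theta}\vn{\nabla^{(3)} A}_{2,[\gamma>0]}^{\theta}+c\rho^{-k-2/p+1}\vn{A}_{2,[\gamma>0]},
\end{equation*}
so that every nonlinear term can be written as a factor of $\vn{A}_{2,[\gamma>0]}^{2}$ (which is $\leq C\varepsilon_0$ by hypothesis) times either the good term or a scale-invariant $\rho^{-6}$-remainder. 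Choosing $\varepsilon_0$ small enough forces absorption, and one obtains the local energy inequality
\begin{equation*}
\frac{d}{dt}\int |A|^2\gamma^s\,d\mu + \frac{1}{2}\int|\nabla^{(3)}A|^2\gamma^s\,d\mu
\leq c\rho^{-6}\int_{[\gamma>0]}|A|^2\,d\mu + c\rho^{-6}\,|[\gamma>0]|.
\end{equation*}

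Finally I would close the argument by continuity. Define
\[
t^\ast = \sup\Big\{t\in[0,T):\ \sup_{x\in\R^3}\int_{f^{-1}(B_\rho(x))}|A|^2\,d\mu\Big|_{\tau}\leq 2C\varepsilon_0\text{ for all }\tau\leq t\Big\}.
\]
On $[0,t^\ast)$ the local energy inequality, integrated in time, yields
\begin{equation*}
\int|A|^2\gamma^s\,d\mu\Big|_t \leq \varepsilon_0 + c\,\varepsilon_0\,t\,\rho^{-6} + c\,t\,\rho^{-6}|\Sigma_0\cap B_{2\rho}|,
\end{equation*}
which for $t\leq \rho^6/C$ with $C$ sufficiently large gives $\int|A|^2\gamma^s\,d\mu\leq \tfrac{3}{2}C\varepsilon_0$, strictly less than the defining threshold; hence $t^\ast\geq\rho^6/C$, and taking $\gamma\equiv 1$ on $f^{-1}(B_\rho(x))$ yields both \eqref{lifespan1} and \eqref{lifespan2}. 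If $t^\ast<T$, this continuity step also shows the inequality propagates; if $t^\ast=T<\rho^6/C$, then the local $L^2$-smallness of $A$ together with Theorem \ref{Tracefreetheorem1} gives pointwise control of $|A^o|$, and combined with the evolution of $H$ extends the flow smoothly past $T$, contradicting maximality.

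The main obstacle I anticipate is purely combinatorial: for a sixth order flow the evolution of $|A|^2$ generates a large zoo of terms $\nabla^{(i)}A\ast\cdots\ast\nabla^{(l)}A$, and one has to keep careful track of the power of $\gamma$ and the power of $\rho$ produced at each integration by parts to ensure that every term is either absorbable into $\int|\nabla^{(3)}A|^2\gamma^s\,d\mu$ (using smallness of $\vn{A}_2$) or is a genuine $\rho^{-6}$-remainder. The interpolation/Michael--Simon step is the technical heart; once that is done the continuity step is standard.
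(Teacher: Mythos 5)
Your overall strategy---a localised energy inequality for $\vn{A}_{2}^2$, a continuity argument with a threshold, and an extension step at a hypothetical early blowup time---is the same as the paper's, and the Kuwert--Sch\"atzle cutoff interpolation (Lemma \ref{EvolutionLemma3}) is indeed the key technical tool. But there are two genuine gaps.

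First, the area term $c\rho^{-6}\,|[\gamma>0]|$ in your local energy inequality should not be there, and if it were it would destroy the result. The paper's Proposition \ref{EvolutionProposition3} shows that after interpolation the error is only $c\,\cgam^6\,\vn{A}_{2,[\gamma>0]}^2\leq c\,\cgam^6\,\varepsilon_0$: every application of Lemma \ref{EvolutionLemma3} trades a derivative term for another derivative term plus a scale-invariant $\vn{A}_{2,[\gamma>0]}^2$ remainder, never an area. The curvature-dependent derivative bounds \eqref{E:gammaprop} for the cutoff are handled by Lemma \ref{EvolutionLemma4}, again without introducing an area. With your extra term, plugging $t=\rho^6/C$ into $\varepsilon_0 + c\varepsilon_0 t\rho^{-6}+ct\rho^{-6}|\Sigma\cap B_{2\rho}|$ forces $C$ to depend on $|\Sigma\cap B_{2\rho}|/\varepsilon_0$, so $C$ is no longer an absolute constant; worse, the area at time $\tau\in(0,t)$ is not a priori controlled by the initial area. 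Note also that the hypothesis \eqref{Lifespansmallness} is local smallness of $\vn{A}^2_2$, not global smallness of $\vn{A^o}_2^2$, so you cannot invoke Willmore-energy monotonicity and Simon's monotonicity to bound the area either. You need to verify that the inequality closes cleanly, as in Proposition \ref{EvolutionProposition3}.

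Second, the extension step at $t^\ast=T$ is substantially under-justified. Pointwise control of $|A^o|$ from Theorem \ref{Tracefreetheorem1} is not enough to restart a sixth-order quasilinear flow; in fact Theorem \ref{Tracefreetheorem1} requires $\vn{\nabla\Delta H}_2$ on the right, which is not a priori controlled along the flow, and even a uniform bound on $|A|$ would not suffice to apply Theorem \ref{TMste} at time $T$. What is needed is a uniform bound on $\vn{\nabla_{(k)}A}_\infty$ for every $k$ so that $\Sigma_t\to\Sigma_T$ in $C^\infty$ and the flow can be restarted from $\Sigma_T$. This is precisely the paper's Proposition \ref{EvolutionProposition6}, built from the local energy inequalities for $\vn{\nabla_{(k)}A}_{2,\gamma^s}^2$ (Proposition \ref{EvolutionProposition5}), the $L^\infty$--$L^2$ Sobolev estimate (Proposition \ref{EvolutionProposition2}), a nested family of cutoffs, and a Gronwall argument. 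Your proposal compresses this into a sentence, but it is a significant and necessary piece of the argument.
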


By linearising \eqref{trilaplacianIntro1} around a round sphere, one finds that
the spectrum is entirely real and non-positive.
Using standard centre-manifold methods, one can conclude that any solution with
initial data close to a sphere $S_1$ in $C^{6,\alpha}$ exists for all time and
converges exponentially fast to a sphere $S_2$ in the $C^\infty$ topology.
Note that $S_1$ need not equal $S_2$ in any sense: the perturbation could be a
small translation, in which case the centre changes, or it may change the
enclosed volume of the initial data from that of $S_1$, in which case the
radius changes.
Using completely different methods, we are able to improve this statement,
weakening the initial condition to geometric averaged closeness in $L^2$.

Let us briefly detail the argument.
Note that if the initial immersion is locally smooth with finite total
curvature then for any $\varepsilon_{0}>0$ it is always possible to find a
positive $\rho=\rho\oo{\varepsilon_{0},\Sigma_{0}}$ such that assumption
$\oo{\ref{Lifespansmallness}}$ is satisfied.
Theorem \ref{LifespanTheorem} is a characterisation of singularities in the
following sense.
It tells us that the only way the flow can cease to exist and lose regularity
in finite time $T<\infty$ is if we encounter a specific type of curvature
singularity: if $\rho\left( t\right)$ denotes the largest radius such that
\eqref{Lifespansmallness} holds at time $t$, then $\rho\left( t\right) \leq
\sqrt[6]{C\left( T-t\right)}$ so at least $\varepsilon_0$ of the curvature
concentrates in a ball $f^{-1}\left( B_{\rho\left( T\right)}\left( x\right)
\right)$.
That is,
$$\lim_{t\rightarrow T} \int_{f^{-1}\left( B_{\rho\left( t\right)}\left( x\right) \right)} \left| A \right|^2 d\mu \geq \varepsilon_0$$
where $x=x\left( t\right)$ is the centre of a ball where the integral is maximised at time $t$.

By considering a sequence of rescalings, we are able to extract a convergent
subsequence asymptotic to a stationary solution of \eqref{trilaplacianIntro1}.
This relies on the monotonicity of the $L^2$-norm of the tracefree curvature
tensor.
The stationary solution extracted is entire, with small total tracefree
curvature in $L^2$, and so by applying the Gap Lemma we are able to show
that this is a contradiction.
Subconvergence of the flow to a specific sphere results, which we then
strengthen by a standard linearised stability argument.
Our final result is the following.

\begin{theorem}[Long time existence and exponential convergence to round spheres]
\label{Theorem1}
There exists an absolute constant $\varepsilon_{0}>0$ such that a geometric
triharmonic heat flow $f:\Sigma\times\left[0,T\right)\rightarrow\mathbb{R}^{3}$
with smooth initial data satisfying
\begin{equation}
\label{Theorem1,1}
\intM{\norm{A^{o}}^{2}}\Big|_{t=0}\leq\varepsilon_{0}<8\pi
\end{equation}
exists for all time, is a family of embeddings, and for some $x\in\R^3$ the
family $\Sigma_{t}$ converges to $\mathbb{S}_{\sqrt[3]{3V_0/4\pi}}^{2}(x)$
exponentially fast in the $C^\infty$ topology, where $V_0>0$ is the signed
enclosed volume of the initial immersion.
\end{theorem}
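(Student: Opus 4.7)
The plan has four phases: (i) derive topological and integral consequences of \eqref{Theorem1,1} and preserve global $L^2$-smallness of $A^o$ along the flow; (ii) prove long-time existence by a blowup-contradiction argument against the Gap Lemma; (iii) use energy decay to obtain subconvergence to a round sphere whose radius is forced by the preserved volume; (iv) upgrade subconvergence to exponential $C^\infty$-convergence via linearised stability.

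\emph{Phase (i).} The pointwise identity $\norm{A^o}^2 = \tfrac12 H^2 - 2K$ together with Gauss-Bonnet gives
\[
\intM{\norm{A^o}^2} = \tfrac12\intM{H^2} - 8\pi\chi(\Sigma).
\]
Combining this with the Willmore lower bound $\intM{H^2}\geq16\pi$ and \eqref{Theorem1,1} with $\varepsilon_0<8\pi$ forces $\chi(\Sigma)>0$, so $\Sigma$ is topologically a sphere, and bounds $\intM{\norm{A}^2}\leq 2\varepsilon_0+16\pi$. A standard energy calculation under \eqref{trilaplacianIntro1} should yield an evolution
\[
\frac{d}{dt}\intM{\norm{A^o}^2} = -2\intM{\norm{\nabla\Delta A^o}^2} + \text{(absorbable cubic-in-$A$ terms)}
\]
and, under the smallness assumption, the cubic terms are absorbed by interpolation into the leading dissipation, giving non-increase of $\intM{\norm{A^o}^2}$ on $[0,T)$. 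Combined with \eqref{EQvolevo} and \eqref{EQareaevo}, this also controls the isoperimetric ratio and shows $V_0>0$.

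\emph{Phase (ii).} Suppose $T<\infty$. For each $t$ let $\rho(t)>0$ be the largest radius realising \eqref{Lifespansmallness} and $x(t)$ a maximising centre; Theorem \ref{LifespanTheorem} forces $\rho(t)^6\leq C(T-t)$ and concentration $\int_{f^{-1}(B_{\rho(t)}(x(t)))}\norm{A}^2\,d\mu\geq\varepsilon_0$. Pick $t_k\nearrow T$, set $\rho_k=\rho(t_k)$, and rescale:
\[
\tilde f_k(p,\tau) := \rho_k^{-1}\bigl(f(p,t_k+\rho_k^{6}\tau) - x(t_k)\bigr),
\]
again a solution of \eqref{trilaplacianIntro1} on lengthening intervals ending at $\tau=0$. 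Scale invariance of $\intM{\norm{A^o}^2\,d\mu}$ and of $\intM{\norm{A}^2\,d\mu}$ preserves the smallness on each $\tilde f_k$; Theorem \ref{Tracefreetheorem1} applied on compacta away from the concentration point, together with parabolic bootstrapping through \eqref{trilaplacianIntro1}, provides uniform $C^\infty$-estimates, yielding a smooth subsequential limit $\tilde f_\infty:\tilde\Sigma\times(-\infty,0]\to\R^3$. Crucially, the total energy drop $\int_0^T -\tfrac{d}{dt}\intM{\norm{A^o}^2}\,dt\leq\varepsilon_0$ is finite while the rescaled drop on any fixed interval $[\tau_0,0]$ equals $\rho_k^{6}$ times a bounded quantity and hence vanishes as $k\to\infty$; thus $\tfrac{d}{d\tau}\intM{\norm{\tilde A_\infty^o}^2}\equiv 0$ and the evolution identity forces $\Delta^2H\equiv 0$ on $\tilde f_\infty$. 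The limit is non-closed (a blowup at a concentration point), so the non-closed case of Theorem \ref{GapTheorem1} forces $\tilde f_\infty(\tilde\Sigma)=\mathbb{P}^2$, which has $A\equiv0$, contradicting the concentration $\int_{B_1}\norm{\tilde A}^2\,d\mu\geq\varepsilon_0>0$ at $\tau=0$. Hence $T=\infty$.

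\emph{Phases (iii)--(iv).} With $T=\infty$, \eqref{EQareaevo} together with the isoperimetric lower bound $\norm{\Sigma_t}\geq(36\pi)^{1/3}V_0^{2/3}$ gives $\int_0^\infty\intM{\norm{\Delta H}^2}\,dt<\infty$, so $\intM{\norm{\Delta H}^2}\to 0$ along some $t_k\to\infty$. Bootstrapped smoothness bounds then yield, modulo translations, smooth subconvergence of $\Sigma_{t_k}$ to a closed stationary immersion satisfying $\intM{\norm{A^o}^2}<8\pi$; the closed case of Theorem \ref{GapTheorem1} identifies this limit as $\mathbb{S}^2_\rho(x_\infty)$, with $\rho=\sqrt[3]{3V_0/4\pi}$ determined by preservation of volume. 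Once $\Sigma_{t_k}$ is $C^{6,\alpha}$-close to this specific sphere, the linearisation of \eqref{trilaplacianIntro1} around $\mathbb{S}^2_\rho$ has non-positive spectrum with a spectral gap after quotienting by the kernel — translations (frozen by choosing $x_\infty$ as centre) and rescalings (excluded by conservation of $V_0$); a standard linearised-stability/centre-manifold argument then promotes subconvergence to full exponential $C^\infty$-convergence to $\mathbb{S}^2_{\sqrt[3]{3V_0/4\pi}}(x_\infty)$, and embeddedness follows from the open nature of the embedded condition in $C^{6,\alpha}$.

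The main obstacle is Phase (ii): one needs (a) uniform $C^\infty$ smoothness of the rescaled flows on compact subsets of $(-\infty,0]$ — requiring Theorem \ref{Tracefreetheorem1} in a scale-invariant form together with time-direction parabolic bootstrapping — and (b) the quantitative monotonicity of $\intM{\norm{A^o}^2}$ strong enough to force the limit to be genuinely stationary. A secondary subtlety in Phase (iv) is identifying the limit sphere \emph{uniquely} rather than merely up to the centre-manifold kernel, which relies on the stability analysis (together with volume preservation) rather than on any conserved geometric centre of mass.
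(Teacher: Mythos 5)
Your overall plan — preserved $L^2$-smallness of $A^o$, a blowup-versus-Gap-Lemma contradiction for long-time existence, subconvergence to a round sphere, then linearised stability — matches the paper's. But there is a genuine gap in Phase (ii), a computational error in Phase (i), and a smaller omission concerning embeddedness.

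\emph{The gap in Phase (ii).} You assert that the blowup limit ``is non-closed (a blowup at a concentration point)'' and then apply only the non-closed branch of the Gap Lemma. Non-closedness is not automatic, and proving it is precisely what the paper's Theorem \ref{BlowupTheorem3} is for. A priori the blowup could be a compact stationary surface; the Gap Lemma then gives a round sphere, which carries $\int\norm{A}^2>0$, so the concentration estimate $\int_{\tilde f^{-1}(\overline{B_1})}\norm{A}^2\,d\mu\geq\varepsilon_1$ is perfectly consistent with a compact limit and yields no contradiction. The paper rules this out with a scaling/area argument: $\mu(\Sigma_{t_j})=r_j^2\,\mu_j(\Sigma)\big|_{\tau=0}$, so a compact blowup (hence bounded $\mu_j(\Sigma)$) together with $r_j\to 0$ would force $\mu(\Sigma_{t_j})\to 0$, contradicting the isoperimetric lower bound $\mu(\Sigma_t)\geq\sqrt[3]{36\pi V_0^2}>0$. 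The latter requires that each $\Sigma_t$ be an embedded closed surface (via Li--Yau and preserved smallness, see below) and uses \eqref{EQvolevo}. Without this step your contradiction does not close.

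\emph{The error in Phase (i).} The leading dissipation is $\intM{\norm{\nabla\Delta H}^{2}}$, not $\intM{\norm{\nabla\Delta A^o}^{2}}$, and one cannot obtain the latter by a direct calculation — the evolution of $A^o$ is not $\partial_t A^o=\Delta^3 A^o+\cdots$, and $\nabla^*A^o=\tfrac12\nabla H$ is a contraction, not an equivalence. The paper's key step is the Gauss--Bonnet identity $\frac{d}{dt}\intM{\norm{A^o}^2}=\tfrac12\frac{d}{dt}\intM{H^2}$, after which the evolution of $H$ and integration by parts give
\[
\frac{d}{dt}\intM{\norm{A^o}^2}\leq-\intM{\norm{\nabla\Delta H}^2}+\llll{A^o}_\infty^2\intM{\norm{\Delta H}^2},
\]
with the remainder absorbed using the pointwise tracefree estimate (Theorem \ref{Tracefreetheorem1}). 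Since your Phases (ii)--(iii) rely on knowing the precise dissipation (to identify the limit as stationary), the corrected form is needed. (Also note $2\intM{K}=4\pi\chi(\Sigma)$, not $8\pi\chi(\Sigma)$; this affects your numerical bound on $\intM{\norm{A}^2}$ though not the conclusion $\chi(\Sigma)=2$.)

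\emph{Embeddedness.} Your appeal to the ``open nature of the embedded condition in $C^{6,\alpha}$'' gives embeddedness only near the terminal sphere. The theorem asserts embeddedness on all of $[0,T)$, and indeed this is used \emph{inside} the area-vanishing argument above. The paper obtains it from the Li--Yau inequality (Theorem \ref{LiYauTheorem1}) together with the preserved smallness of $\intM{\norm{A^o}^2}$: since $\chi(\Sigma)=2$, Gauss--Bonnet gives $\tfrac14\intM{H^2}=\tfrac12\intM{\norm{A^o}^2}+4\pi<8\pi$ for all $t$.

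Finally, your Phase (iii) route through \eqref{EQareaevo} (finiteness of $\int_0^\infty\intM{\norm{\Delta H}^2}\,dt$ via the area/isoperimetric bound) is a valid alternative to the paper's use of the $\vn{\nabla\Delta H}_2^2$ dissipation from Theorem \ref{Preservedsmallnesstheorem1}; both identify a stationary limit to which the Gap Lemma applies.
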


The paper is organised as follows.
After fixing notation in Section 2 and stating evolution equations and
interpolation inequalities in Section 3, we establish local control on the
$L^2$-norm of iterated covariant derivatives of the curvature in Section 4.
These provide us with all the requisite tools to prove the Theorem
\ref{LifespanTheorem}, which we do in Section 5.
Section 6 is devoted to elliptic analysis, and culminates in the proof of
Theorem \ref{Tracefreetheorem1}.
Section 7 establishes that $\vn{A^o}_2^2$ is a Lyapunov functional for the flow
if initially small enough.
Section 8 contains the proof of the Gap Lemma.
In Section 9 we use a compactness theorem and most of our earlier work to
establish the existence and several key properties of the blowup of a singular
time for the flow.
This is applied in Section 10 to conclude global existence and subconvergence
to a sphere, which is then strengthened by a standard linearised stability
argument.

\end{section}

\begin{section}{Preliminaries}

We consider a surface $\Sigma$ immersed into $\R^3$ via a smooth immersion
$f:\Sigma\rightarrow\mathbb{R}^{3}$.
The induced metric $g$ on $\Sigma$ is given by pulling back the standard
Euclidean inner product on $\mathbb{R}^{3}$ along $f$. It is defined pointwise
by
\begin{equation}
g_{ij}=\inner{\partial_{i}f,\partial_{j}f}.\label{Prelimiaries1}
\end{equation}
Here $\partial$ denotes the coordinate derivatives on $\Sigma$ and
$\inner{\cdot,\cdot}$ the standard Euclidean inner product. This induces an
inner product on all tensors along $f$ of similar type, which
are defined via traces over pairs of indices. For example, the inner product on
the $\oo{1,2}$-tensors $S,T$ is defined by
\[
\inner{S,T}=g^{jm}g^{kn}g_{il}S_{jk}^{i}T_{mn}^{l}.
\]
Here we adopt the Einstein convention: repeated indices are summed from $1$
to $2$.
The norm squared of a tensor $T$ is the inner product of $T$ with itself and is
denoted $\norm{T}^{2}$.
For tensors $S$ and $T$ we will also frequently utilise the notation of
Hamilton \cite{Hamilton1}, using $S\star T$ to denote an arbitrary linear
combination of new tensors that is formed by contracting pairs of indices of
$S$ and $T$ by $g$. For example, $\inner{S,T}=S\star T$. A useful property of
$\star-$notation is that
\[
S\star T\leq c\norm{S}\norm{T}
\]
for some constant $c$ that depends only on the number of indices in $S$ and $T$ combined (recall the dimension and codimension are fixed).
For a tensor $T$, we define
\[
P_{j}^{i}\oo{T}=\sum_{r_{1}+\dots+r_{j}=i}\nabla_{\oo{r_{1} }}T\star\cdots\star\nabla_{\oo{r_{j} }}T,
\]
a sum of terms, each of which contains $j$ factors of $T$ with total order of
covariant derivatives equal to $i$.
This notation is particularly useful when only the number of factors of $T$ and
total number of derivatives is important.  As an example
\[
\norm{\nabla_{\oo{2}}T}^{2}=\nabla_{\oo{2}}T\star\nabla_{\oo{2}} T=P_{2}^{4}\oo{T}.
\]
The mean curvature is defined to be
\[
H=g^{ij}A_{ij}=A_{i}^{i}
\]
where $A_{ij}$ are the components of the second fundamental form $A$:
\[
A_{ij}=-\inner{\partial_{ij}f,\nu}.
\]
The Gauss curvature $K$ is the determinant of the Weingarten map $A_i^j := g^{ik}A_{jk}$,
\[
K = \text{det }A_i^j\,.
\]
We also define the tracefree second fundamental form to be the symmetric
tracefree part of $A$, denoted $A^{o}$, with components 
\[
A_{ij}^{o}=A_{ij}-\frac{1}{2}g_{ij}H.
\]
A short calculation shows that
\begin{equation}
\oo{\nabla^{*}A^{o}}_{j}:=\nabla^{i}A_{ij}^{o}=g^{ip}\nabla_{p}\oo{A_{ij}-\frac{1}{2}g_{ij}H}=\frac{1}{2}\nabla_{j}H;
\label{EQabove10}
\end{equation}
note that $-\nabla^{*}$ is the geometric divergence operator or formal adjoint of $\nabla$ in $L^2$.
Here we have used the total symmetry of the $\oo{0,3}$-tensor $\nabla A$, known as the Codazzi equations:
\[
\nabla_{i}A_{jk}=\nabla_{j}A_{ki}=\nabla_{k}A_{ij}.
\]
It follows that
\begin{equation*}
\norm{\nabla H}^{2}= 4\norm{\nabla^{*}A^{o}}^{2}\leq4\norm{\nabla A^{o}}^{2}.
\end{equation*}
Moreover, for any $k\in\mathbb{N}$ we have
\begin{equation}
\nablanorm{k}{H}{2}\leq4\nablanorm{k}{A^{o}}{2}\label{Prelimiaries2}
\end{equation}
and hence
\begin{equation}
\nablanorm{k}{A}{2}=\nablanorm{k}{A^{o}}{2}+\frac{1}{2}\nablanorm{k}{H}{2}\leq3\nablanorm{k}{A^{o}}{2}.\nonumber
\end{equation}
The Laplace-Beltrami operator acts on the components of an $\oo{m,n}$-tensor $S$ via
\[
\Delta S_{j_{1}\dots j_{n}}^{i_{1}\dots i_{m}}=g^{pq}\nabla_{pq}S_{j_{1}\dots j_{n}}^{i_{1}\dots i_{m}}.
\]
We define the integral of a compactly supported function
$h:\Sigma\rightarrow\mathbb{R}$ as 
\[
\intM{h\,},
\]
where $d\mu$ is the induced measure on $\Sigma$:
\[
d\mu:=\sqrt{\det\left( g_{ij} \right)}\, d\mathcal{L}^{2}.
\]
We denote the area by
\[
\mu(\Sigma) = |\Sigma| = \int_\Sigma\,d\mu\,.
\]
We will frequently employ the Divergence Theorem to `integrate by parts' over a
non-compact immersion.
To do so we include a cut-off function $\gamma:\Sigma\rightarrow\mathbb{R}^{3}$ in the integrand.
We keep this function arbitrary but sufficiently smooth with compact support so
that we may eventually conclude global results from results that hold on the
support of $\gamma$.
In particular, we take $\gamma=\tilde{\gamma}\circ f:\Sigma\rightarrow\cc{0,1}$
for some $C^{3}$ function $\tilde{\gamma}$ satisfying
\[
0\leq\tilde{\gamma}\leq1\text{  and  }\llll{\tilde{\gamma}}_{C^{3}\oo{\mathbb{R}^{3} }}\leq\ctild<\infty.
\]
Using the chain rule and the fact that $\left\{\partial_{i}f\right\}$ provides
a basis for the tangent bundle $T\Sigma$, there is a universal, bounded
constant $\cgam>0$ such that $\gamma$ satisfies
\begin{equation}
\llll{\nabla\gamma}_{\infty}\leq\cgam,\llll{\nabla_{\oo{2}}\gamma}_{\infty}\leq\cgam\oo{\cgam+\norm{A}},\text{  and  }
\llll{\nabla_{\oo{3}}\gamma}_{\infty}\leq\cgam\oo{\cgam^{2}+\cgam\norm{A}+\norm{\nabla A}} \mbox{.}\label{E:gammaprop}
\end{equation}

\end{section}

\begin{section}{Evolution equations for elementary geometric quantities}
Under \eqref{trilaplacianIntro1}, we have the following evolution equations for geometric quantities associated to $\Sigma_t$.
The proof of Lemma \ref{EvolutionLemma1} is standard and straightforward.
\begin{lemma}\label{EvolutionLemma1}
Let $f:\Sigma\times\left[0,T\right)\rightarrow\mathbb{R}^{3}$ satisfy ${\eqref{trilaplacianIntro1}}$. Then
\begin{align*}
\frac{\partial}{\partial t}g=-2\oo{\Delta^2 H}A,\hspace*{.5cm}&\frac{\partial}{\partial t}d\mu=-H\oo{\Delta^{2}H}d\mu, \hspace*{.5cm}\frac{\partial}{\partial t}\nu=\nabla\Delta^{2}H,\\
\frac{\partial}{\partial t}A=\Delta^{3}A+P_{3}^{4}\oo{A},&\text{  and  } \frac{\partial}{\partial t}H=\Delta^{3}H+(\Delta^{2}H)\norm{A}^{2}.
\end{align*}
\end{lemma}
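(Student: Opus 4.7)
The plan is to compute each time derivative by working in local coordinates, using $\partial_t f = -(\Delta^2 H)\nu$ together with the Gauss--Weingarten identities, and then re-expressing the outcome in the required schematic form. For the metric, I differentiate $g_{ij} = \langle \partial_i f, \partial_j f\rangle$; since $\nu\perp\partial_j f$, only the term in which the spatial derivative falls on $\nu$ survives, and the Weingarten relation $\langle\partial_i\nu,\partial_j f\rangle = A_{ij}$ (obtained by differentiating $\langle\nu,\partial_j f\rangle = 0$) produces $\partial_t g_{ij} = -2(\Delta^2 H)A_{ij}$. The formula for $d\mu$ is then immediate from $\partial_t\log\sqrt{\det g} = \tfrac{1}{2}g^{ij}\partial_t g_{ij} = -H(\Delta^2 H)$. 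For $\nu$, the constraint $|\nu|^2 = 1$ forces $\partial_t\nu\perp\nu$, so it suffices to compute tangential components: differentiating $\langle\nu,\partial_a f\rangle = 0$ and using $[\partial_t,\partial_a] = 0$ gives $\langle \partial_t\nu,\partial_a f\rangle = -\langle\nu,\partial_a\partial_t f\rangle = \nabla_a(\Delta^2 H)$, so $\partial_t\nu = \nabla(\Delta^2 H)$ regarded as a tangent vector via the isomorphism induced by $g$.

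The main computation is for $A$. Differentiating $A_{ij} = -\langle\partial_{ij}f,\nu\rangle$ and using both $\partial_i\nu = A_i^k\partial_k f$ and the Gauss decomposition $\partial_{ij}f = \Gamma^k_{ij}\partial_k f - A_{ij}\nu$, the tangential Christoffel contributions assemble into covariant derivatives and I obtain
\begin{equation*}
\partial_t A_{ij} = \nabla_i\nabla_j(\Delta^2 H) - (\Delta^2 H)(A^2)_{ij},
\end{equation*}
where $(A^2)_{ij} = A_i^k A_{kj}$. The term $(\Delta^2 H)(A^2)_{ij}$ has four derivatives distributed over three factors of $A$ and so already fits the schematic form $P_3^4(A)$.

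To convert the leading term into $\Delta^3 A_{ij}$, I invoke Simons' identity $\Delta A_{ij} = \nabla_i\nabla_j H + P_3^0(A)$, itself a consequence of the Codazzi equation and the Ricci identity together with the Gauss relation $R_{ijkl} = K(g_{ik}g_{jl} - g_{il}g_{jk})$ with $K = P_2^0(A)$. Iterating twice and tracking (i) the commutators $[\Delta,\nabla^2]$ applied to lower powers of $\Delta$ acting on $H$, each of which contributes $P_3^{\bullet}(A)$ terms because the Riemann tensor of a surface is quadratic in $A$, and (ii) the Laplacians falling on the $P_3^{\bullet}(A)$ remainders previously generated, one arrives at
\begin{equation*}
\Delta^3 A_{ij} = \nabla_i\nabla_j(\Delta^2 H) + P_3^4(A).
\end{equation*}
Substituting back yields $\partial_t A = \Delta^3 A + P_3^4(A)$. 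This commutator bookkeeping is the only nontrivial step in the whole lemma; everything else is direct from Gauss--Weingarten.

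Finally, for the mean curvature, I combine $\partial_t g^{ij} = 2(\Delta^2 H)A^{ij}$ (from differentiating $g^{ij}g_{jk} = \delta^i_k$) with the trace of $\partial_t A_{ij}$ already computed. The first product contributes $2(\Delta^2 H)|A|^2$; the $g^{ij}$-trace of $\nabla_i\nabla_j(\Delta^2 H)$ produces $\Delta^3 H$ (using $\nabla g = 0$ so that trace commutes with $\Delta$); and the $g^{ij}$-trace of $-(\Delta^2 H)(A^2)_{ij}$ contributes $-(\Delta^2 H)|A|^2$. Adding these three pieces gives $\partial_t H = \Delta^3 H + (\Delta^2 H)|A|^2$, completing the list.
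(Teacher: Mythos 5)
Your proposal is correct, and since the paper declares the proof of Lemma~\ref{EvolutionLemma1} ``standard and straightforward'' and omits it entirely, there is no proof in the paper to compare against; your Gauss--Weingarten computation is the canonical argument the authors have in mind. In particular, your sign convention $\langle\partial_i\nu,\partial_j f\rangle = A_{ij}$ matches the paper's definition $A_{ij}=-\langle\partial_{ij}f,\nu\rangle$, your derivation $\partial_t A_{ij} = \nabla_i\nabla_j(\Delta^2 H) - (\Delta^2 H)A_i^kA_{kj}$ is correct, the conversion $\Delta^3 A_{ij} = \nabla_i\nabla_j\Delta^2 H + P_3^4(A)$ via iterated Simons' identity and the Gauss relation $R = P_2^0(A)$ is a valid commutator bookkeeping (each commutator $[\Delta,\nabla_i\nabla_j]$ contributes $P_3$-type terms with total derivative count matching the left-hand side), and the trace computation $\partial_t H = 2(\Delta^2 H)|A|^2 + \Delta^3 H - (\Delta^2 H)|A|^2 = \Delta^3 H + (\Delta^2 H)|A|^2$ is exactly right.
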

By combining the evolution equations in Lemma \ref{EvolutionLemma1} with the
interchange of covariant derivatives formula, we obtain the following evolution
equations.
\begin{lemma}\label{EvolutionLemma2}
Let $f:\Sigma\times\left[0,T\right)\rightarrow\mathbb{R}^{3}$ satisfy ${\eqref{trilaplacianIntro1}}$. Then for any $k\in\mathbb{N}_{0}$:
\[
\frac{\partial}{\partial t}\nabla_{\oo{k}}A=\Delta^{3}\nabla_{\oo{k}}A+P_{3}^{k+4}\oo{A}.
\]
\end{lemma}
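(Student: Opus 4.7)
I plan to prove this by induction on $k$. The base case $k=0$ is precisely the evolution equation $\frac{\partial}{\partial t}A = \Delta^{3}A + P_{3}^{4}\oo{A}$ already recorded in Lemma \ref{EvolutionLemma1}. For the inductive step, I assume the identity at level $k$ and write $\nabla_{\oo{k+1}}A = \nabla\nabla_{\oo{k}}A$. Since the induced metric is time-dependent, $\frac{\partial}{\partial t}$ and $\nabla$ do not commute, so
$$
\frac{\partial}{\partial t}\nabla_{\oo{k+1}}A
= \nabla\oo{\frac{\partial}{\partial t}\nabla_{\oo{k}}A} + \cc{\frac{\partial}{\partial t},\nabla}\nabla_{\oo{k}}A.
$$

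To handle the bracket $\cc{\frac{\partial}{\partial t},\nabla}$, I invoke the standard variational formula $\frac{\partial}{\partial t}\Gamma_{ij}^{\ell} = \frac{1}{2}g^{\ell m}\oo{\nabla_{i}\frac{\partial}{\partial t}g_{jm} + \nabla_{j}\frac{\partial}{\partial t}g_{im} - \nabla_{m}\frac{\partial}{\partial t}g_{ij}}$. From Lemma \ref{EvolutionLemma1} we have $\frac{\partial}{\partial t}g = -2\oo{\Delta^{2}H}A$; since $H = g^{ij}A_{ij}$ is of type $P_{1}^{0}\oo{A}$, repeated Laplacians give $\Delta^{2}H = P_{1}^{4}\oo{A}$, and therefore $\frac{\partial}{\partial t}\Gamma = P_{2}^{5}\oo{A}$. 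Contracted against $\nabla_{\oo{k}}A = P_{1}^{k}\oo{A}$ (one term per index), this bracket contributes an element of $P_{3}^{k+5}\oo{A}$. For the remaining piece I apply the inductive hypothesis and distribute $\nabla$ across $P_{3}^{k+4}\oo{A}$ by Leibniz to obtain
$$
\nabla\oo{\frac{\partial}{\partial t}\nabla_{\oo{k}}A} = \nabla\Delta^{3}\nabla_{\oo{k}}A + P_{3}^{k+5}\oo{A}.
$$

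It then remains to commute the outer $\nabla$ past $\Delta^{3}$. Writing $\cc{\nabla,\Delta^{3}} = \cc{\nabla,\Delta}\Delta^{2} + \Delta\cc{\nabla,\Delta}\Delta + \Delta^{2}\cc{\nabla,\Delta}$ and using $\cc{\nabla_{c},\nabla_{a}\nabla_{b}}T = R\star\nabla T + \nabla R\star T$ together with the Gauss equation $R_{ijk\ell} = A_{ik}A_{j\ell} - A_{i\ell}A_{jk}$ (so $R = A\star A$ and $\nabla R = A\star\nabla A$), the basic commutator $\cc{\nabla,\Delta}$ sends $P_{1}^{n}\oo{A}$ into $P_{3}^{n+1}\oo{A}$. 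Since $\Delta$ takes $P_{3}^{m}\oo{A}$ to $P_{3}^{m+2}\oo{A}$, each of the three pieces above lands in $P_{3}^{k+5}\oo{A}$ when applied to $\nabla_{\oo{k}}A$; hence $\nabla\Delta^{3}\nabla_{\oo{k}}A = \Delta^{3}\nabla_{\oo{k+1}}A + P_{3}^{k+5}\oo{A}$, and summing the three $P_{3}^{k+5}\oo{A}$ contributions closes the induction.

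No single step is deep; the substance is purely algebraic bookkeeping in the $P_{j}^{i}$ notation. The main potential pitfall is overlooking one of the per-index contributions from $\cc{\frac{\partial}{\partial t},\nabla}$ or a term produced by the Leibniz expansion of $\nabla P_{3}^{k+4}\oo{A}$, but all such terms carry the same factor-count and derivative budget and so are harmlessly absorbed into $P_{3}^{k+5}\oo{A}$.
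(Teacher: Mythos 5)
Your proof is correct and follows exactly the route the paper alludes to (combining the $k=0$ evolution equation from Lemma \ref{EvolutionLemma1} with the interchange of covariant derivative and time derivative, i.e.\ the $\partial_t\Gamma$ variation formula, together with the interchange of spatial covariant derivatives via the Gauss equation). The induction, the identification $\partial_t\Gamma = P_2^5(A)$, and the commutator decomposition $[\nabla,\Delta^3] = [\nabla,\Delta]\Delta^2 + \Delta[\nabla,\Delta]\Delta + \Delta^2[\nabla,\Delta]$ are all accounted for cleanly, and every error term correctly lands in $P_3^{k+5}(A)$.
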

\begin{corollary}\label{EvolutionCorollary1}
Let $f:\Sigma\times\left[0,T\right)\rightarrow\mathbb{R}^{3}$ satisfy ${\eqref{trilaplacianIntro1}}$. Then for any $k\in\mathbb{N}_{0}$:
\[
\frac{\partial}{\partial
t}\nablanorm{k}{A}{2}=2\inner{\nabla_{\oo{k}}A,\nabla^{i_{3}i_{2}i_{1}}\nabla_{i_{1}i_{2}i_{3}}\nabla_{\oo{k}}A}+P_{3}^{k+4}\oo{A}\star\nabla_{\oo{k}}A.
\]
\end{corollary}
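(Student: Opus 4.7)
The plan is to start from $|\nabla_{(k)}A|^2 = \langle \nabla_{(k)}A, \nabla_{(k)}A\rangle_g$ and differentiate in $t$, being careful that the inner product itself depends on $t$ through the metric. By the product rule we get
\[
\frac{\partial}{\partial t}|\nabla_{(k)}A|^2
 = 2\inner{\nabla_{(k)}A, \partial_t\nabla_{(k)}A} + (\partial_t g^{\star})\text{-terms},
\]
where the second piece collects the variations coming from the $k+2$ inverse metrics used to contract the two copies of $\nabla_{(k)}A$. I would handle these two pieces separately and show that everything not of the desired form collapses into $P_3^{k+4}(A)\star\nabla_{(k)}A$.

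For the metric piece, by Lemma \ref{EvolutionLemma1} one has $\partial_t g_{ij} = -2(\Delta^2 H)A_{ij}$, hence $\partial_t g^{ij} = 2(\Delta^2 H) A^{ij}$. Each inverse-metric variation therefore produces a term of the schematic form $(\Delta^2 H)\,A\star\nabla_{(k)}A\star\nabla_{(k)}A$. Since $\Delta^2 H$ is a contraction of $\nabla_{(4)}A$, this expression has four factors of derivatives of $A$ with total derivative count $4+0+k+k = 2k+4$, which is precisely the schematic $P_3^{k+4}(A)\star\nabla_{(k)}A$.

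For the time derivative of $\nabla_{(k)}A$ itself, Lemma \ref{EvolutionLemma2} gives $\partial_t\nabla_{(k)}A = \Delta^3\nabla_{(k)}A + P_3^{k+4}(A)$, so pairing with $2\nabla_{(k)}A$ contributes $2\inner{\nabla_{(k)}A,\Delta^3\nabla_{(k)}A} + P_3^{k+4}(A)\star\nabla_{(k)}A$. It then remains to replace $\Delta^3\nabla_{(k)}A$ by $\nabla^{i_3 i_2 i_1}\nabla_{i_1 i_2 i_3}\nabla_{(k)}A$. The two differ only by commuting covariant derivatives through the iterated Laplacian; each commutator $[\nabla_a,\nabla_b]$ applied to a tensor produces a curvature term, and on a surface the Gauss equation gives $R = A\star A$. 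Thus every commutation swaps two orders of differentiation for a factor of $A\star A$, preserving the total derivative count. Starting from $\Delta^3\nabla_{(k)}A$, which has one factor and $k+6$ derivatives, each commutator therefore yields a piece of schematic form $A\star A\star\nabla_{(k+4)}A$, i.e.\ $P_3^{k+4}(A)$, which is absorbed into the error.

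The bookkeeping — keeping careful track of factor counts and derivative orders so that every stray contribution really does land in $P_3^{k+4}(A)\star\nabla_{(k)}A$ — is the only real obstacle; no cancellation or subtle estimate is required. Combining the three contributions above produces the stated identity.
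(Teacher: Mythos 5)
Your proposal is correct and follows the standard route the paper implicitly relies on: differentiate $\langle\nabla_{(k)}A,\nabla_{(k)}A\rangle_g$ in $t$, use Lemma \ref{EvolutionLemma1} for the variation of the $k+2$ inverse metrics (each giving $(\Delta^2 H)\,A\star\nabla_{(k)}A\star\nabla_{(k)}A$, which is $P_3^{k+4}(A)\star\nabla_{(k)}A$), use Lemma \ref{EvolutionLemma2} for $\partial_t\nabla_{(k)}A$, and trade $\Delta^3\nabla_{(k)}A$ for $\nabla^{i_3i_2i_1}\nabla_{i_1i_2i_3}\nabla_{(k)}A$ via Ricci commutators $R=A\star A$, with outer derivatives distributed onto the curvature still giving $P_3^{k+4}(A)$. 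The only cosmetic imprecision is your phrase ``each commutator yields $A\star A\star\nabla_{(k+4)}A$'': in general you also get $\nabla_{(j_1)}(A\star A)\star\nabla_{(j_2)}\nabla_{(k+\ell)}A$ with $j_1+j_2$ absorbing the remaining outer derivatives, but this is still $P_3^{k+4}(A)$, so the conclusion stands.
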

Using Corollary $\ref{EvolutionCorollary1}$, Lemma $\ref{EvolutionLemma1}$, the
product rule, and integrating by parts thrice we obtain the following.
\begin{corollary}\label{EvolutionCorollary2}
Let $f:\Sigma\times\left[0,T\right)\rightarrow\mathbb{R}^{3}$ satisfy ${\eqref{trilaplacianIntro1}}$. Then for any $k,s\in\mathbb{N}_{0}$:
\begin{multline*}
\frac{d}{dt}\intM{\nablanorm{k}{A}{2}\gamma^{s}}+2\intM{\nablanorm{k+3}{A}{2}}=\intM{\nablanorm{k}{A}{2}\partial_{t}\gamma^{s}}\\
-2\sum_{j=1}^{3}\dbinom{3}{j}\intM{\inner{\oo{\nabla_{\oo{j}}\gamma^{s}}\oo{\nabla_{\oo{k+3-j}}A},\nabla_{\oo{k+3}}A}} +\intM{ \left( P_{3}^{k+4}\oo{A}\star\nabla_{\oo{k}}A\right)\gamma^{s}}.
\end{multline*}
\end{corollary}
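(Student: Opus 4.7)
The plan is to differentiate under the integral sign, substitute the pointwise evolution of $|\nabla_{(k)}A|^2$ supplied by Corollary~\ref{EvolutionCorollary1}, and then move the three outer covariant derivatives in the principal term back across the inner product by integration by parts. I would begin by applying the product rule:
\begin{equation*}
\frac{d}{dt}\intM{\nablanorm{k}{A}{2}\gamma^{s}}
=\intM{\gamma^{s}\,\partial_{t}\nablanorm{k}{A}{2}}
+\intM{\nablanorm{k}{A}{2}\,\partial_{t}\gamma^{s}}
+\intM{\nablanorm{k}{A}{2}\gamma^{s}\oo{-H\Delta^{2}H}},
\end{equation*}
where the third term uses Lemma~\ref{EvolutionLemma1}. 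The factor $H\oo{\Delta^{2}H}\nablanorm{k}{A}{2}$ is a product of four factors of $A$ with $2k+4$ covariant derivatives distributed as $(0,4,k,k)$, which is exactly of the form $P_{3}^{k+4}\oo{A}\star\nabla_{(k)}A$; hence this measure contribution may be absorbed silently into the $P_{3}^{k+4}$ term appearing in the final identity.

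Next I would substitute Corollary~\ref{EvolutionCorollary1} for $\partial_{t}|\nabla_{(k)}A|^{2}$. This splits the first integrand into a principal sixth-order piece and an error of the form $P_{3}^{k+4}(A)\star\nabla_{(k)}A$ multiplied by $\gamma^{s}$, the latter going directly to the right-hand side. The remaining principal term is
\begin{equation*}
2\intM{\inner{\nabla_{(k)}A,\nabla^{i_{3}i_{2}i_{1}}\nabla_{i_{1}i_{2}i_{3}}\nabla_{(k)}A}\gamma^{s}}.
\end{equation*}
The operator $\nabla^{i_{3}i_{2}i_{1}}\nabla_{i_{1}i_{2}i_{3}}$ is paired precisely so that it behaves like $\nabla_{(3)}^{*}\nabla_{(3)}$ against the test tensor $\gamma^{s}\nabla_{(k)}A$. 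Moving the three raised derivatives across via the formal adjoint identity yields
\begin{equation*}
2\intM{\inner{\nabla_{(k)}A,\nabla^{i_{3}i_{2}i_{1}}\nabla_{i_{1}i_{2}i_{3}}\nabla_{(k)}A}\gamma^{s}}
=-2\intM{\inner{\nabla_{(3)}\oo{\gamma^{s}\nabla_{(k)}A},\nabla_{(k+3)}A}}.
\end{equation*}
The Leibniz rule then expands $\nabla_{(3)}(\gamma^{s}\nabla_{(k)}A)=\sum_{j=0}^{3}\binom{3}{j}\oo{\nabla_{(j)}\gamma^{s}}\oo{\nabla_{(k+3-j)}A}$. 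The $j=0$ term produces $-2\intM{\nablanorm{k+3}{A}{2}\gamma^{s}}$ which I would move to the left-hand side, while the $j=1,2,3$ terms yield exactly the binomial sum of boundary contributions in the statement.

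Collecting these three pieces — the transferred dissipative term, the $j\ge 1$ boundary terms, the $\partial_{t}\gamma^{s}$ contribution, and the $P_{3}^{k+4}$ error (now augmented by the absorbed measure term) — gives the claimed identity. The only real obstacle is bookkeeping: ensuring that (i) the commutators $[\nabla_{i},\nabla_{j}]$ generated while integrating by parts the three derivatives produce only $P_{3}^{k+4}\oo{A}$-type terms, which is immediate because the commutators insert a Riemann curvature factor equal to $A\star A$ on one of the lower-order $\nabla_{(\le k+2)}A$ factors, and (ii) the signs and binomial coefficients at each integration-by-parts step line up with the Leibniz expansion. Both are entirely mechanical and require no cancellation beyond what the integration-by-parts identity already provides.
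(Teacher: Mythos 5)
Your proof is correct and is essentially the paper's own argument: product rule in $t$ together with Lemma~\ref{EvolutionLemma1} for $\partial_t d\mu$, substitution of Corollary~\ref{EvolutionCorollary1}, three integrations by parts, and the Leibniz expansion of $\nabla_{(3)}(\gamma^s\nabla_{(k)}A)$, with the measure-variation term $-H(\Delta^2H)|\nabla_{(k)}A|^2\gamma^s$ absorbed into the $P_3^{k+4}(A)\star\nabla_{(k)}A$ contribution exactly as you describe. One small observation: the statement of Corollary~\ref{EvolutionCorollary2} as printed omits the weight $\gamma^s$ from the second left-hand-side term, but the version your argument produces, $2\intM{\nablanorm{k+3}{A}{2}\gamma^{s}}$, is the intended one, as is clear from Proposition~\ref{EvolutionProposition1}.
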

To deal with the extraneous intermediate terms above we will make use of \eqref{E:gammaprop} and an interpolation inequality of Kuwert and Sch\"{a}tzle \cite[Corollary 5.3]{Kuwert1}. This gives us:
\begin{lemma}\label{EvolutionLemma3}
Let $2\leq p<\infty,k,m\in\mathbb{N}$ and $s\geq kp$.
Then for $\delta>0$ there exists a constant $c>0$ depending only on $\delta$, $s$ and $p$ such that
\begin{align*}
  \oo{\intM{\nablanorm{k}{A}{p}\gamma^{s} }}^{\frac{1}{p}}
\leq\delta\, \cgam^{-1}\oo{\intM{\nablanorm{k+1}{A}{p}\gamma^{s+p} }}^{\frac{1}{p}}+c_{\delta}\, \cgam^{k}\oo{\int_{\cc{\gamma>0}}{\norm{A}^{2}}\gamma^{s-kp}\,d\mu}^{\frac{1}{p}} \mbox{.}
\end{align*}
\end{lemma}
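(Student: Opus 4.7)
My plan is to derive Lemma \ref{EvolutionLemma3} as a short consequence of the weighted interpolation inequality of Kuwert and Sch\"atzle \cite[Corollary 5.3]{Kuwert1}, combined with a single application of Young's inequality. The role of the hypothesis $s\geq kp$ is exactly to keep every power of $\gamma$ appearing in the argument nonnegative; the role of \eqref{E:gammaprop} is to give the correct $c_\gamma$-dependence of the constants.

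Concretely, the Kuwert--Sch\"atzle result provides a multiplicative interpolation of the form
\[
\oo{\intM{\nablanorm{k}{A}{p}\gamma^{s}}}^{\frac{1}{p}}
\leq c\, \oo{\intM{\nablanorm{k+1}{A}{p}\gamma^{s+p}}}^{\frac{\theta}{p}}
\oo{\int_{[\gamma>0]}{\norm{A}^{2}\gamma^{s-kp}\,d\mu}}^{\frac{1-\theta}{p}},
\]
where $\theta\in(0,1)$ is determined by counting derivatives and matching $L^p$ against $L^2$, and where the combinatorial constant $c$ carries a factor of the form $\cgam^{k(1-\theta)}$ arising from the $\cgam$-bounds in \eqref{E:gammaprop} on $\nabla\gamma,\nabla_{(2)}\gamma,\nabla_{(3)}\gamma$ used in the integration-by-parts proof of that interpolation. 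The condition $s\geq kp$ is used there precisely so that after each integration by parts one can redistribute the $\gamma$-power as $\gamma^{s+p}$ on the higher-derivative term and $\gamma^{s-kp}$ on the curvature term without producing negative exponents.

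With this multiplicative estimate in hand, the second step is a direct application of Young's inequality $xy\leq\eta\, x^{1/\theta}+c_{\eta}\,y^{1/(1-\theta)}$, applied with $x$ the first factor and $y$ the second. Choosing $\eta$ as $\delta\cgam^{-1}$ produces the $\delta\cgam^{-1}$ in front of the $\nabla_{(k+1)}A$ term, while the remaining factors, together with the $\cgam^{k(1-\theta)}$ already absorbed, combine into an overall $c_\delta\cgam^{k}$ in front of the $\norm{A}^2$ term. The resulting inequality is exactly the one stated in Lemma \ref{EvolutionLemma3}, with the constant $c_\delta$ depending only on $\delta$, $s$ and $p$.

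The only real obstacle is therefore bookkeeping: one must verify that invoking Corollary 5.3 of \cite{Kuwert1} in the cut-off form yields precisely the exponents $\gamma^{s+p}$ and $\gamma^{s-kp}$ on the two right-hand side integrals, and that the scaling of $\cgam$ through the derivative bounds \eqref{E:gammaprop} gives the explicit $\cgam^{-1}$ and $\cgam^k$ weights advertised in the statement. There is no genuinely new analytic idea beyond the Kuwert--Sch\"atzle interpolation and Young's inequality.
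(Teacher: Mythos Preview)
Your approach is essentially the same as the paper's: the paper gives no independent proof and simply states that the argument is ``essentially the same as in \cite{Kuwert1}'' with the $\cgam$-dependence made explicit, which is exactly what you propose. One small point: Corollary~5.3 in \cite{Kuwert1} is already in additive form (obtained by iterating a one-step additive inequality), not the multiplicative form you describe, so the Young-inequality step you outline is unnecessary---but it is not wrong, and your identification of the roles of $s\geq kp$ and of \eqref{E:gammaprop} in producing the $\cgam^{-1}$ and $\cgam^k$ weights is correct.
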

The proof of Lemma \ref{EvolutionLemma3} is essentially the same as in
\cite{Kuwert1}. However, we have retained the derivative bounds $\cgam$ to make
more explicit the scale-invariance of each quantity.  Using Lemma
\ref{EvolutionLemma3}, the properties \eqref{E:gammaprop} and Corollary
\ref{EvolutionCorollary2} we obtain the following estimate.
\begin{proposition}\label{EvolutionProposition1}
Let $f:\Sigma\times\left[0,T\right)\rightarrow\mathbb{R}^{3}$ satisfy
${\eqref{trilaplacianIntro1}}$.
For any $\delta>0$, $k\in\mathbb{N}_{0}$, and $s\geq2\oo{k+3}$ there exists a constant $c>0$ depending only on $\delta$, $s$ and $k$ such that
\begin{equation*}
  \frac{d}{dt}\intM{\nablanorm{k}{A}{2}\gamma^{s}}+\oo{2-\delta}\intM{\nablanorm{k+3}{A}{2}\gamma^{s}}
\leq\intM{\left( P_{3}^{k+4}\oo{A}\star\nabla_{\oo{k}}A \right) \gamma^{s}}+c \, \cgam^{2\oo{k+3}}\llll{A}_{2,\cc{\gamma>0}}^{2} \mbox{.}
\end{equation*}
\end{proposition}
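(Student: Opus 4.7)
The proof starts from Corollary \ref{EvolutionCorollary2}. The $P_{3}^{k+4}\oo{A}\star\nabla_{\oo{k}}A$ remainder is already in the desired form; the task is to show that the other right-hand side pieces, namely the time-derivative term $\intM{\nablanorm{k}{A}{2}\partial_{t}\gamma^{s}}$ and the three boundary terms indexed by $j=1,2,3$, can together be bounded by $\delta\intM{\nablanorm{k+3}{A}{2}\gamma^{s}} + c\,\cgam^{2\oo{k+3}}\A{2}$. Absorbing the $\delta$ piece into the left-hand side then produces the coefficient $(2-\delta)$.

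For a boundary term at index $j$, apply Young's inequality to obtain
\[
  \intM{\inner{\oo{\nabla_{\oo{j}}\gamma^{s}}\oo{\nabla_{\oo{k+3-j}}A},\nabla_{\oo{k+3}}A}}
  \leq \tfrac{\delta}{6}\intM{\nablanorm{k+3}{A}{2}\gamma^{s}}
    + C_{\delta} \intM{\frac{\nablanorm{j}{\gamma^{s}}{2}}{\gamma^{s}}\,\nablanorm{k+3-j}{A}{2}}.
\]
Using the chain rule for $\gamma^{s}$ together with the bounds \eqref{E:gammaprop}, the factor $\nablanorm{j}{\gamma^{s}}{2}\gamma^{-s}$ is dominated by a finite sum of terms of schematic form $\cgam^{2a}\norm{A}^{2b}\norm{\nabla A}^{2c}\gamma^{s-2j}$ with $a+b+c=j$; the hypothesis $s\geq2\oo{k+3}\geq 2j$ is exactly what guarantees $s-2j\geq 0$. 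Distributing the auxiliary factors of $\norm{A}$ and $\norm{\nabla A}$ against $\nablanorm{k+3-j}{A}{2}$ by further applications of Young's inequality yields a finite sum of expressions of the form $\cgam^{2\alpha}\intM{\nablanorm{\ell}{A}{2}\gamma^{s'}}$ with $\ell\leq k+2$ and $\alpha\leq j$. Iterating Lemma \ref{EvolutionLemma3} with $p=2$ exactly $k+3-\ell$ times then bounds each such integral by $\delta\intM{\nablanorm{k+3}{A}{2}\gamma^{s}}+C_{\delta}\,\cgam^{2\oo{k+3}}\A{2}$; the $\cgam$-powers produced by the interpolation combine with the $\cgam^{2\alpha}$ prefactor to give exactly $\cgam^{2\oo{k+3}}$, while $\gamma\leq 1$ is used to reconcile the slightly higher $\gamma$-weights appearing on the higher-derivative side.

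The time-derivative term is the novel ingredient compared with fourth-order flows such as surface diffusion or Willmore, since expanding $\partial_{t}\gamma=-\inner{\nabla\tilde{\gamma},\nu}\Delta^{2}H$ reveals four derivatives of $A$, which for $k=0$ exceeds the order $k+3=3$ available on the left. To handle this, I would integrate by parts once in space,
\[
  \intM{\nablanorm{k}{A}{2}\partial_{t}\gamma^{s}}
  = -s\intM{\Delta\oo{\nablanorm{k}{A}{2}\gamma^{s-1}\inner{\nabla\tilde{\gamma},\nu}}\Delta H},
\]
and note that $\nabla\nu$ scales as $A$, so that the product rule applied to the outer Laplacian produces integrands with at most $k+2$ derivatives on any single factor of $A$. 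Combined with $\norm{\Delta H}\leq c\,\norm{\nabla_{\oo{2}}A}$, Cauchy--Schwarz and one further pass through Lemma \ref{EvolutionLemma3} bring this term into the same template as the boundary terms, so it is absorbed identically. Choosing $\delta$ sufficiently small and relabelling gives the stated inequality. The principal obstacle is the book-keeping: verifying that $s\geq 2\oo{k+3}$ is the sharp threshold which keeps every intermediate $\gamma$-exponent non-negative, and that the accumulated powers of $\cgam$ line up to give precisely $\cgam^{2\oo{k+3}}$.
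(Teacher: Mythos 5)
Your outline follows the route the paper intends: start from Corollary~\ref{EvolutionCorollary2}, use the derivative bounds \eqref{E:gammaprop} on $\gamma$ together with the interpolation Lemma~\ref{EvolutionLemma3} to push the $\partial_t\gamma^s$ term and the three boundary terms into $\delta\intM{\nablanorm{k+3}{A}{2}\gamma^s}+c\,\cgam^{2(k+3)}\A{2}$, and absorb the $\delta$. You correctly flag the $\Delta^2 H$ hidden in $\partial_t\gamma^s$ as the genuinely sixth-order complication, and moving a Laplacian across via integration by parts so that only $\Delta H$ remains is the right move. For $j=1$, where $\nabla\gamma^s=s\gamma^{s-1}\nabla\gamma$ carries only a power of $\cgam$, the Young-then-iterate-Lemma-\ref{EvolutionLemma3} argument works exactly as you describe, and the bookkeeping with $s\geq 2(k+3)\geq 2j$ and the accumulated $\cgam^{2(k+3)}$ lines up.

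There is, however, a gap in the central absorption step for $j\geq 2$, and symmetrically inside the expansion of $\Delta\oo{\nablanorm{k}{A}{2}\gamma^{s-1}\inner{\nabla\tilde\gamma,\nu}}$. By \eqref{E:gammaprop}, $\nabla_{(2)}\gamma^s$ and $\nabla_{(3)}\gamma^s$ carry pointwise factors of $\norm{A}$ and $\norm{\nabla A}$, so after the first Young's inequality you land on integrands such as $\cgam^{2}\norm{A}^{2}\nablanorm{k+1}{A}{2}\gamma^{s-2}$ or $\cgam^{2}\norm{\nabla A}^{2}\nablanorm{k}{A}{2}\gamma^{s-2}$, which are quartic in $A$. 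Your assertion that ``distributing the auxiliary factors $\ldots$ by further applications of Young's inequality yields a finite sum of $\cgam^{2\alpha}\intM{\nablanorm{\ell}{A}{2}\gamma^{s'}}$'' is not correct: Young's inequality on $\norm{A}^{2}\nablanorm{k+1}{A}{2}$ produces $\norm{A}^4$ and $\nablanorm{k+1}{A}{4}$, neither of which is a single-factor $L^2$ quantity, and Lemma~\ref{EvolutionLemma3} interpolates only integrals of the form $\intM{\nablanorm{m}{A}{p}\gamma^\sigma}$, not products of two distinct derivative orders, so the iteration you invoke cannot be run on them. They also do not sit inside $\cgam^{2(k+3)}\A{2}$, which is quadratic and carries no derivatives. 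These pieces are $P_4$-type and have to be tracked with the $P$-style remainder $\intM{\oo{P_3^{k+4}\oo{A}\star\nabla_{\oo{k}}A}\gamma^s}$ rather than the interpolation tail; doing so honestly one must also explain the $\cgam^2$ prefactor, the $\gamma^{s-2}$ weight, and the two-derivative shortfall between $P_4^{2k+2}$ and $P_4^{2k+4}$, none of which your write-up addresses. The $A$- and $\nabla A$-factors arising from $\nabla\nu$ and $\Delta\nu$ in your treatment of the time-derivative term need the same care. So the overall strategy is sound, but the reduction of the residual quartic terms is asserted rather than proved.
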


In order to control the localised norms $\vn{\nabla_{(k)}A}^2_{2,\gamma^{s}} :=
\int_\Sigma |\nabla_{(k)}A|^2\gamma^sd\mu$ we will employ standard
interpolation and Sobolev inequalities, which we now state.

\begin{theorem}[Michael-Simon Sobolev Inequality \cite{MichaelSimon1}]\label{MichaelSimon}
Let $\Sigma$ be an immersed surface and $u\in C_{c}^{\infty}\oo{\Sigma}$. Then, for a universal, bounded constant $\cmss>0$,
\[
\intM{u^{2}}\leq\cmss\oo{\intM{\norm{\nabla u}}+\intM{\norm{u}\norm{H} }}^{2}
\] 
\end{theorem}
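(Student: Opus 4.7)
The plan is to follow the classical Michael--Simon approach, which rests on two ingredients: a pointwise potential-type estimate derived from the first variation of area, and a truncation/co-area argument that converts it into the claimed Sobolev inequality. By passing from $u$ to $\sqrt{u^2+\varepsilon^2}-\varepsilon$ and then sending $\varepsilon\downarrow 0$, it suffices to treat nonnegative $u\in C_c^\infty(\Sigma)$, since the right-hand side involves only $\norm{\nabla u}$ and $\norm{u}$.

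The core step is to establish the pointwise bound
\[
u(y)\leq c\intM{\frac{\norm{\nabla u}+\norm{u}\norm{H}}{\norm{f(\cdot)-f(y)}}}
\]
at every $y\in\Sigma$ with $u(y)>0$. I would derive this by applying the first variation identity
\[
\intM{\operatorname{div}_\Sigma X}=-\intM{\inner{\vec{H},X}}
\]
to radial test vector fields of the form $X(z)=\eta(\norm{f(z)-f(y)})\oo{f(z)-f(y)}$ with $\eta$ a smooth cutoff, combined with $\Delta f = \vec{H}$. This gives a differential inequality for the weighted mass ratio $r\mapsto r^{-2}\int_{f^{-1}(B_r(f(y)))}u\,d\mu$, which upon integration from a small $\sigma$ to a fixed $\rho$ of order $\operatorname{diam} f(\operatorname{supp} u)$ and passage to the limit $\sigma\downarrow 0$ delivers the displayed potential estimate.

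To convert the pointwise estimate into the $L^2$--$L^1$ inequality, I would apply it to the truncations $(u-t)_+$ and set $\mu_t=\mu(\{u>t\})$. The estimate at the level set $\{u=t\}$, combined with Fubini and a weak-type bound on the Riesz-like kernel $\norm{f(\cdot)-f(y)}^{-1}$ (whose integrability uses only the area of $\{u>t\}$ via the co-area formula), yields an isoperimetric-type inequality of the form
\[
t\,\mu_t^{1/2}\leq c\int_{\{u>t\}}\oo{\norm{\nabla u}+\norm{u}\norm{H}}d\mu.
\]
Integrating in $t$, using $\intM{u^2}=2\int_0^\infty t\,\mu_t\,dt$, and applying Cauchy--Schwarz in $t$ produces the claimed inequality with a universal constant $\cmss$.

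The main obstacle is the derivation of the monotonicity-type identity with clean dependence on $\norm{H}$: because $f$ is only an immersion, the preimage $f^{-1}(B_r(x))$ may consist of many sheets and all mass ratios have to be understood through pullback. Handling this carefully is exactly what is done once and for all in \cite{MichaelSimon1}, after which the remainder of the proof reduces to the standard truncation argument sketched above.
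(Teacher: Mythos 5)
The paper does not prove this statement: it is imported verbatim from Michael and Simon \cite{MichaelSimon1}, so there is no in-paper argument to compare against. Your outline is a faithful sketch of the original route — reduction to nonnegative $u$, the first-variation/monotonicity identity obtained from radial test vector fields $X=\eta(\norm{f-f(y)})(f-f(y))$, the resulting pointwise potential bound $u(y)\leq c\intM{\sfrac{(\norm{\nabla u}+\norm{u}\norm{H})}{\norm{f(\cdot)-f(y)}}}$, and a level-set argument to pass to the $L^1\to L^2$ inequality. The remark about the multi-sheeted preimage $f^{-1}(B_r(x))$ being the real technical content is also on point.

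The one step I would flag is the last paragraph. The inequality you extract, $t\,\mu_t^{1/2}\leq c\int_{\{u>t\}}(\norm{\nabla u}+\norm{u}\norm{H})\,d\mu$, is a weak-type estimate, and simply ``integrating in $t$'' produces $\int_0^\infty t\mu_t^{1/2}\,dt\leq c\intM{u(\norm{\nabla u}+\norm{u}\norm{H})}$, which is not the inequality you want and involves quantities not on the right-hand side of the theorem. What actually closes the argument is either
(a) first proving the isoperimetric form $\mu(E)^{1/2}\leq c\big(\mathcal{H}^1(\partial E)+\int_E\norm{H}\,d\mu\big)$ for level sets $E=\{u>t\}$, integrating in $t$ with the co-area formula to get $\int_0^\infty \mu_t^{1/2}\,dt\leq c\big(\intM{\norm{\nabla u}}+\intM{\norm{u}\norm{H}}\big)$, and then invoking the Cavalieri-type lower bound $\llll{u}_2\leq\int_0^\infty\mu_t^{1/2}\,dt$ (proved by differentiating $\big(\int_0^t\mu_s^{1/2}ds\big)^2$ and using that $\mu_s$ is nonincreasing); or (b) applying the weak-type estimate to dyadic truncations $u_k=\min(\max(u-2^k,0),2^k)$ and summing (the Maz'ya--Federer--Fleming trick). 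Either fix is standard, but as written your passage from the weak-type estimate to $\llll{u}_2$ via ``Cauchy--Schwarz in $t$'' does not go through, and it is the one place your sketch departs from what the Michael--Simon argument actually does.
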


\begin{theorem}[$\text{\cite[Theorem 5.6]{Kuwert2}}$]\label{EvolutionTheorem1}
Let $f:\Sigma\rightarrow\mathbb{R}^{3}$ be a smooth immersion. For
$n<p\leq\infty$, $0\leq\beta\leq\infty$ and $0<\alpha\leq1$, where
$\frac{1}{\alpha}=\oo{\frac{1}{2}-\frac{1}{p}}\beta+1$, there is a constant $c$
depending on $p$ and $\beta$ such that for all $u\in C_{c}^{1}\oo{\Sigma}$,
\begin{equation}
\llll{u}_{\infty}\leq c\llll{u}_{p}^{1-\alpha}\oo{\llll{\nabla u}_{p}+\llll{Hu}_{p}}^{\alpha} \mbox{.}\label{EvolutionTheorem1,1}
\end{equation}
\end{theorem}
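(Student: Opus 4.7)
My plan is to derive this Gagliardo--Nirenberg--Moser type interpolation by bootstrapping the $L^1\to L^2$ embedding provided by the Michael--Simon Sobolev inequality (Theorem \ref{MichaelSimon}) into an $L^\infty$ estimate via a Moser iteration.

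First, I will apply Theorem \ref{MichaelSimon} to $v_\varepsilon=(u^2+\varepsilon)^{q/4}$ for $q\geq 2$, passing $\varepsilon\downarrow 0$ at the end to handle the non-smoothness of $|u|^{q/2}$ at zeros of $u$. Using the pointwise bound $|\nabla v_\varepsilon|\leq(q/2)(u^2+\varepsilon)^{(q-2)/4}|\nabla u|$, this produces
\[
\oo{\intM{|u|^{q}}}^{1/2}\leq c\oo{q\intM{|u|^{q/2-1}|\nabla u|}+\intM{|H||u|^{q/2}}}.
\]
Applying H\"older's inequality with conjugate exponents $p$ and $p'=p/(p-1)$ to each term on the right, pairing $|u|^{q/2-1}$ against $|\nabla u|$ in the first and against $|Hu|$ in the second, yields
\[
\llll{u}_{q}^{q/2}\leq c\,q\,\llll{u}_{p'(q/2-1)}^{q/2-1}\oo{\llll{\nabla u}_{p}+\llll{Hu}_{p}}.
\]

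Second, I will iterate. Define a sequence $(q_k)$ by $q_0=\beta$ and $p'(q_{k+1}/2-1)=q_k$, that is, $q_{k+1}=(2/p')q_k+2$. The hypothesis $p>n=2$ gives $p'<2$, so $q_k$ grows geometrically and $q_k\to\infty$. Taking the $(q_{k+1}/2)$-th root of the previous display at $q=q_{k+1}$ gives
\[
\llll{u}_{q_{k+1}}\leq(cq_{k+1})^{2/q_{k+1}}\llll{u}_{q_k}^{1-2/q_{k+1}}\oo{\llll{\nabla u}_{p}+\llll{Hu}_{p}}^{2/q_{k+1}}.
\]
Composing these for $k=0,1,\dots,N$ and sending $N\to\infty$ produces $\llll{u}_{\infty}$ on the left. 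The limiting constant is finite because the telescoped sum $\sum_k(2/q_{k+1})\log(cq_{k+1})$ converges by geometric growth of $q_k$, while the accumulated exponents of $\llll{u}_\beta$ and of $\llll{\nabla u}_p+\llll{Hu}_p$ assemble via geometric series into $1-\alpha$ and $\alpha$ respectively.

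The precise value of $\alpha$ is dictated by scale invariance: under the ambient dilation $\R^3\ni y\mapsto\lambda y$ acting on the immersed surface, one has $\llll{u}_{\beta}\sim\lambda^{-2/\beta}$, $\llll{\nabla u}_{p}+\llll{Hu}_{p}\sim\lambda^{1-2/p}$, and $\llll{u}_{\infty}$ is unchanged; matching powers of $\lambda$ in the limiting inequality returns exactly $1/\alpha=1+\beta(1/2-1/p)$. The principal obstacle is the bookkeeping of the $q_k$-dependent factors picked up at each iteration step: the gradient term brings in a factor of $q_k$, but the weighting by $2/q_{k+1}$ in the corresponding exponent and the geometric growth of $q_k$ together make the accumulated constant finite. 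A secondary technical point is the $\varepsilon$-regularisation for $|u|^{q/2}$, which must be handled so that Michael--Simon applies cleanly at each stage before the limit is taken.
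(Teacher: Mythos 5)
The paper does not prove this statement; it is a direct citation of \cite[Theorem 5.6]{Kuwert2}, and the argument you give is essentially the Moser iteration used there, bootstrapping the $L^1\to L^2$ Michael--Simon inequality. Your proof is substantively correct, and there are three points worth recording.

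First, you have implicitly corrected what appears to be a misprint in the stated theorem: as printed, the right-hand side reads $\llll{u}_p^{1-\alpha}$, but the parameter $\beta$ would then play no role, and the dimensional bookkeeping would force $\alpha=2/p$ rather than the formula $1/\alpha=(1/2-1/p)\beta+1$. The later application (proof of Theorem \ref{Tracefreetheorem1OLD}, where $p=4$, $\beta=2$, $\alpha=2/3$, $1-\alpha=1/3$) confirms that the intended inequality is $\llll{u}_\infty\leq c\llll{u}_{\beta}^{1-\alpha}\oo{\llll{\nabla u}_p+\llll{Hu}_p}^\alpha$, which is what your iteration actually proves.

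Second, as a complement to your scale-invariance identification of $\alpha$, the exponent can be telescoped directly: with $r=2/p'=2(p-1)/p$, the recursion $q_{k+1}=rq_k+2$ gives $q_k-2=rq_{k-1}$, so
\[
\prod_{k=1}^{N}\Bigl(1-\frac{2}{q_k}\Bigr)=\prod_{k=1}^{N}\frac{rq_{k-1}}{q_k}=\frac{r^N q_0}{q_N}\longrightarrow\frac{\beta}{\beta-q^*},\qquad q^*=-\frac{2p}{p-2},
\]
so the limiting exponent on $\llll{u}_\beta$ is $\beta(p-2)/\bigl(\beta(p-2)+2p\bigr)=1-\alpha$, exactly. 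This removes any lingering appeal to scaling when verifying the constant. The $q_k$-dependent prefactors are handled as you say: $\sum_k(2/q_{k+1})\log(cq_{k+1})<\infty$ because $q_k$ grows geometrically.

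Third, a small but real technical defect in the regularisation: $v_\varepsilon=(u^2+\varepsilon)^{q/4}$ is \emph{not} compactly supported (it equals $\varepsilon^{q/4}$ off $\operatorname{supp}u$), so Theorem \ref{MichaelSimon} does not apply to it as written. Use $v_\varepsilon=(u^2+\varepsilon)^{q/4}-\varepsilon^{q/4}$, which is compactly supported, $C^1$, has the same pointwise gradient bound, and converges monotonically to $|u|^{q/2}$; alternatively, since $q\geq2$, the function $|u|^{q/2}$ is already Lipschitz with compact support and the Michael--Simon inequality applies by the usual density argument, making the regularisation unnecessary. Finally, the endpoint $\beta=0$ would force $\alpha=1$ and a scale-invariant $L^\infty\leq c\,W^{1,p}$ bound with no $\llll{u}_\beta$ factor, which fails for wide bump functions on a plane; the iteration as you set it up naturally requires $\beta>0$, and this should be read as the effective hypothesis.
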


\begin{proposition}[$\text{\cite[Corollary 5.5]{Kuwert2}}$]\label{EvolutionProposition4}
Let $0\leq i_{1},\dots,i_{r}\leq k,\sum_{j=1}^{r}i_{j}=2k$ and $s\geq2k$. Then for any tensor $T$ defined over an immersed surface $f:\Sigma\rightarrow\mathbb{R}^{3}$ we have
\begin{equation*}
\norm{\intM{\nabla_{i_{1}}T\star\cdots\star\nabla_{i_{r}}T\gamma^{s} }}\leq c\llll{T}_{\infty,\cc{\gamma>0}}^{r-2}\oo{\intM{\nablanorm{k}{T}{2}\gamma^{s}}+\cgam^{2k}\llll{T}_{2,\cc{\gamma>0}}^{2}}.
\end{equation*}
\end{proposition}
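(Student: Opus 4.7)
The plan is to split the integrand by a weighted Hölder inequality so that the highest-derivative factors sit in $L^2$ and the remaining ones are absorbed through an $L^\infty$–$L^2$ Gagliardo--Nirenberg interpolation. The derivative budget $\sum_j i_j = 2k$ is precisely what makes the exponents balance to the stated powers $r-2$ and $2$.

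Write $\gamma^s = \prod_j \gamma^{s_j}$ with $s_j = s\,i_j/(2k)$ when $i_j > 0$ and $s_j = 0$ otherwise, so $\sum_j s_j = s$. Take Hölder exponents $p_j = 2k/i_j$ for $i_j > 0$ and $p_j = \infty$ for $i_j = 0$, noting that $\sum_{i_j>0} p_j^{-1} = (2k)^{-1}\sum_j i_j = 1$. The $\star$-pointwise bound combined with generalised Hölder then gives
\[
\left|\intM{\nabla_{i_1}T \star \cdots \star \nabla_{i_r}T\,\gamma^s}\right|
\leq c\,\llll{T}_{\infty,\cc{\gamma>0}}^{r_0}\prod_{i_j>0}\left(\intM{\nablanorm{i_j}{T}{2k/i_j}\gamma^{s}}\right)^{\sfrac{i_j}{2k}},
\]
where $r_0 = \#\{j : i_j = 0\}$, and the identity $s_j p_j = s$ preserves the weight $\gamma^s$ on each surviving factor.

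Next, assemble from Lemma \ref{EvolutionLemma3} (iterated across derivative orders) the weighted Gagliardo--Nirenberg estimate
\[
\left(\intM{\nablanorm{i}{T}{2k/i}\gamma^s}\right)^{\sfrac{i}{2k}}
\leq c\,\llll{T}_{\infty,\cc{\gamma>0}}^{1-i/k}\Bigl(\llll{\nabla_{\oo{k}}T}_{2,\gamma^s} + \cgam^k\llll{T}_{2,\cc{\gamma>0}}\Bigr)^{i/k}
\]
for each $1 \leq i \leq k$. Substituting this into the Hölder bound and collecting exponents, the power of $\llll{T}_{\infty,\cc{\gamma>0}}$ equals $r_0 + \sum_{i_j>0}(1 - i_j/k) = r - \sum_j i_j/k = r-2$, while the bracketed factor appears with total exponent $\sum_{i_j>0} i_j/k = 2$. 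A final application of $(a+b)^2 \leq 2(a^2+b^2)$ expands the squared bracket into the stated sum, with the weight $\cgam^{2k}$ produced by squaring $\cgam^k$.

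The main obstacle is establishing the weighted Gagliardo--Nirenberg estimate in the second display cleanly. Lemma \ref{EvolutionLemma3} only interpolates one derivative order at a time at a fixed $L^p$, so chaining it to reach the intermediate scale $L^{2k/i}$ between unweighted $L^\infty$ and weighted $L^2$ requires careful tracking of the exponent of $\gamma$ at each step and repeated use of \eqref{E:gammaprop} to absorb derivatives falling on the cutoff. The structure however directly parallels the Euclidean Gagliardo--Nirenberg argument, and indeed the estimate appears in this form in Kuwert--Schätzle \cite{Kuwert2}.
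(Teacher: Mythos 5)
The paper gives no proof of this proposition --- it is cited verbatim as Corollary 5.5 of Kuwert--Sch\"atzle \cite{Kuwert2} --- so the comparison here is really against the argument in that reference, which your outline follows correctly in structure. The H\"older split with exponents $p_j = 2k/i_j$, the weight factorisation $\gamma^{s_j p_j} = \gamma^s$, and the exponent bookkeeping giving $r-2$ on $\|T\|_\infty$ and $2$ on the bracket are all right, and match how the result is actually obtained in \cite{Kuwert2}.

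The one place where your write-up is looser than the real proof is in how you propose to obtain the weighted Gagliardo--Nirenberg estimate in your second display. Lemma \ref{EvolutionLemma3} is purely an absorption/interpolation in the \emph{number of derivatives} at a \emph{fixed} Lebesgue exponent $p$ (it trades $\nabla_{(k)}$ for a bit of $\nabla_{(k+1)}$ and an $L^2$ tail), so iterating it can never move you between the exponents $L^{2k/i}$, $L^2$ and $L^\infty$: changing $p$ genuinely requires a Sobolev ingredient on the surface, i.e.\ the Michael--Simon inequality (Theorem \ref{MichaelSimon}) or the multiplicative Sobolev estimate Theorem \ref{EvolutionTheorem1}, in the role that the ordinary Euclidean Sobolev embedding plays in the flat Gagliardo--Nirenberg argument. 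In \cite{Kuwert2} the interpolation you need is a separate lemma (their Lemma 5.4 / Theorem 5.6) proved via Michael--Simon, not by chaining their analogue of Lemma \ref{EvolutionLemma3}. You do flag this step as the obstacle, which is the right instinct, but the fix is to invoke the surface Sobolev inequality rather than to iterate Lemma \ref{EvolutionLemma3} more carefully. With that tool substituted, the argument closes exactly as you describe.
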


Note that we have included $\cgam$ explicitly on the right hand side above.
This will be important later when we select a particular cutoff function.
The next estimate below is an adaptation of \cite[Lemma 4.3]{Kuwert2} to our situation.
The proof is similar.

\begin{proposition}\label{EvolutionProposition2}
Suppose $T$ is a tensor field, $s\ge6$ and $\gamma$ is as in
$\oo{\ref{E:gammaprop}}$.
There is an $\varepsilon>0$ such that
$\llll{A}_{2,\cc{\gamma>0}}^{2}\leq\varepsilon_{0}$ implies that there exists a
universal, bounded constant $c>0$ depending only on the order of $T$ such that
\begin{multline*}
\llll{T}_{\infty,\cc{\gamma=1}}^{6}\leq c\llll{T}_{2,\cc{\gamma>0}}^{4}\Biggl(\llll{\nabla_{\oo{3}}T\cdot\gamma^{\frac{s}{2} }}_{2}^{2}+\llll{\norm{\nabla T}^{2}\norm{A}^{4}\cdot\gamma^{s}}_{1}\\
+ \llll{\norm{\nabla_{\oo{2}}T}\norm{A}\cdot\gamma^{\frac{s}{2} }}_{2}^{2}+\llll{\norm{T}\norm{\nabla A}\norm{A}\cdot\gamma^{\frac{s}{2} }}_{2,}^{2}+\cgam^{6}\llll{T\cdot\gamma^{\frac{s-6}{2} }}_{2,\cc{\gamma>0}}^{2}\Biggr) \mbox{.}
\end{multline*}
\end{proposition}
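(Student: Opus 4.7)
The target inequality is the localised, intrinsic analogue of the scale-invariant two-dimensional Gagliardo--Nirenberg estimate $\vn{T}_{\infty}^{6}\le c\vn{T}_{2}^{4}\vn{\nabla_{(3)}T}_{2}^{2}$; indeed on a $2$-manifold the scaling reads $L^{0}\le L^{4}\cdot L^{-4}$, consistent with what is claimed. The five summands on the right-hand side arise from three sources: the derivatives of the cut-off $\gamma$, which via \eqref{E:gammaprop} carry up to two factors of $|A|$; the non-commutativity of covariant derivatives on $\Sigma$, which produces $|A|$-factors upon interchange; and the $|H|$-correction intrinsic to the Michael--Simon Sobolev inequality. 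My plan is to adapt the proof of \cite[Lemma 4.3]{Kuwert2}, which treats the analogous two-derivative version, to the present three-derivative setting.

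The argument proceeds in three stages. First, regularise $|T|$ by $|T|_{\epsilon}=(\epsilon+|T|^{2})^{1/2}$ and apply the surface Sobolev estimate of Theorem \ref{EvolutionTheorem1} to $v=|T|_{\epsilon}\gamma^{s/6}$ with exponents $p>2$ close to $2$ and $\beta$ chosen so that the interpolation exponent $\alpha=1/3$; raising to the sixth power and passing $\epsilon\downarrow 0$ yields
\[
\llll{T}_{\infty,\cc{\gamma=1}}^{6}\le c\,\vn{v}_{p}^{4}\bigl(\vn{\nabla v}_{p}+\vn{Hv}_{p}\bigr)^{2}.
\]
Second, bound the factor $\vn{v}_{p}^{4}$ by $c\vn{T}_{2,\cc{\gamma>0}}^{4}$ by iterating the Michael--Simon inequality (Theorem \ref{MichaelSimon}) on appropriate powers of $v$ and absorbing the resulting $|H|$-contribution via the smallness hypothesis $\vn{A}_{2,\cc{\gamma>0}}^{2}\le\varepsilon_{0}$, so that the final constant is independent of $\vn{A}_{\infty}$. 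Third, expand the remaining factor using the product rule: $\nabla v$ decomposes into a $|T|_{\epsilon}^{-1}T\cdot\nabla T\cdot\gamma^{s/6}$-piece and a $|T|_{\epsilon}\gamma^{s/6-1}\nabla\gamma$-piece, while $Hv=|T|_{\epsilon}H\gamma^{s/6}$. Bounding $|\nabla_{(k)}\gamma|$ via \eqref{E:gammaprop} and then applying Proposition \ref{EvolutionProposition4} together with the Kuwert--Sch\"atzle interpolation Lemma \ref{EvolutionLemma3} to interpolate $\nabla T$ and $\nabla_{(2)}T$ against $\vn{\nabla_{(3)}T\cdot\gamma^{s/2}}_{2}$ and $\vn{T}_{2,\cc{\gamma>0}}$ then produces the four remaining summands: the principal term $\vn{\nabla_{(3)}T\cdot\gamma^{s/2}}_{2}^{2}$; the $|A|$-weighted mixed terms $\vn{|\nabla T|^{2}|A|^{4}\gamma^{s}}_{1}$, $\vn{|\nabla_{(2)}T||A|\gamma^{s/2}}_{2}^{2}$ and $\vn{|T||\nabla A||A|\gamma^{s/2}}_{2}^{2}$ (which trace back to the $|A|$-content of $\vn{\nabla_{(k)}\gamma}_{\infty}$ for $k=2,3$ in \eqref{E:gammaprop}, compounded by Codazzi commutator corrections); and the residual $\cgam^{6}\vn{T\cdot\gamma^{(s-6)/2}}_{2,\cc{\gamma>0}}^{2}$.

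The main obstacle will be the Michael--Simon iteration step in the second stage: the $L^{2}$-smallness of $A$ must be invoked at just the right moment to absorb a term of the form $\vn{Hu}_{p}\le\vn{H}_{2,\cc{\gamma>0}}\vn{u}_{2p/(p-2)}$ arising during the bootstrap from $L^{p}$ to $L^{2}$, so that the resulting constant does not depend on the uncontrolled quantity $\vn{A}_{\infty}$. A more mechanical but delicate bookkeeping challenge is to confirm that the $|A|$-factors produced by $\vn{\nabla_{(k)}\gamma}_{\infty}$ via \eqref{E:gammaprop}, combined with the Codazzi commutator terms and the interpolation of Lemma \ref{EvolutionLemma3}, yield exactly the stated weights $|A|^{4}$, $|A|$ and $|\nabla A||A|$, and precisely the power $\cgam^{6}$ in the residual term.
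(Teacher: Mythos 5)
Your overall architecture — regularise $|T|$, apply the Kuwert--Sch\"atzle interpolation estimate of Theorem~\ref{EvolutionTheorem1}, then reduce the resulting $L^{p}$ norms via the Michael--Simon inequality and interpolation from Lemma~\ref{EvolutionLemma3}, tracing the $|A|$-weighted corrections to \eqref{E:gammaprop} and commutator terms — is the same skeleton used in \cite[Lemma 4.3]{Kuwert2} and implicitly in the proof of Theorem~\ref{Tracefreetheorem1OLD} here, so you have identified the right family of tools. However, your concrete exponent choice contains a genuine error that makes your second step fail.

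You take $\alpha=1/3$, so that raising Theorem~\ref{EvolutionTheorem1} to the sixth power gives $\vn{v}_{\infty}^{6}\le c\,\vn{v}_{\beta}^{4}\bigl(\vn{\nabla v}_{p}+\vn{Hv}_{p}\bigr)^{2}$. The constraint $\tfrac{1}{\alpha}=(\tfrac12-\tfrac1p)\beta+1$ then forces $\beta=\tfrac{4p}{p-2}$, which is at least $6$ (and blows up as $p\searrow 2$). Your step two claims $\vn{v}_{\beta}^{4}\le c\,\vn{T}_{2,\cc{\gamma>0}}^{4}$ ``by iterating the Michael--Simon inequality,'' but this is not possible: Michael--Simon converts a higher $L^{q}$ norm of $v$ into the $L^{2}$ norm of $v$ \emph{times} gradient data on $v$, and no amount of iteration removes the gradient contribution. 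Concretely, for $\varepsilon_{0}$ small one gets $\vn{v}_{4}^{2}\le c\vn{v}_{2}\vn{\nabla v}_{2}$, never $\vn{v}_{4}\le c\vn{v}_{2}$; the factor $\vn{\nabla v}_{2}$ persists, and with it appears information not accounted for by the squared second factor in your decomposition. Thus the intermediate inequality $\vn{v}_{\beta}^{4}\le c\vn{T}_{2}^{4}$ is false and step two has no fix within your framework without substantially revising steps one and three.

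The correct choice, which the paper itself uses in the proof of Theorem~\ref{Tracefreetheorem1OLD}, is $\alpha=\tfrac23$, $p=4$, $\beta=2$ (note that the displayed Theorem~\ref{EvolutionTheorem1} has a typo and should read $\vn{u}_{\beta}^{1-\alpha}$, as you evidently already assumed). This yields $\vn{v}_{\infty}^{6}\le c\,\vn{v}_{2}^{2}\bigl(\vn{\nabla v}_{4}+\vn{Hv}_{4}\bigr)^{4}$. The first factor is controlled directly by $\vn{T}_{2,\cc{\gamma>0}}^{2}$ since $0\le\gamma\le1$; the remaining factor of $\vn{T}_{2,\cc{\gamma>0}}^{2}$ must be extracted from the second factor. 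One applies Michael--Simon (Theorem~\ref{MichaelSimon}) to $u=|\nabla v|^{2}$ and $u=|Hv|^{2}$, uses the smallness of $\vn{A}_{2,\cc{\gamma>0}}$ to absorb the $|H|$-contributions, and then applies integration by parts, Cauchy--Schwarz, and Lemma~\ref{EvolutionLemma3} to trade intermediate-order quantities for the terms $\vn{\nabla_{(3)}T\gamma^{s/2}}_{2}^{2}$, $\vn{\norm{\nabla T}^{2}\norm{A}^{4}\gamma^{s}}_{1}$, $\vn{\norm{\nabla_{(2)}T}\norm{A}\gamma^{s/2}}_{2}^{2}$, $\vn{\norm{T}\norm{\nabla A}\norm{A}\gamma^{s/2}}_{2}^{2}$ and $\cgam^{6}\vn{T\gamma^{(s-6)/2}}_{2,\cc{\gamma>0}}^{2}$, with the extra $\vn{T}_{2}^{2}$ factor emerging multiplicatively from this process. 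Your bookkeeping of where the $|A|$ and $|\nabla A|$ weights originate (cutoff derivatives per \eqref{E:gammaprop}, Codazzi commutators, and the Michael--Simon $|H|$-correction) is correct; the only substantive error is the choice $\alpha=1/3$, which places too much of the burden on the first factor and leads to the unrealisable claim $\vn{v}_{\beta}^{4}\le c\vn{T}_{2}^{4}$.
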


The lemma below is similar to \cite[Lemma 4.2]{Kuwert2} and \cite[Proposition 2.6]{Kuwert1}.
We have incorporated the smallness assumption and since the proof involves some
new estimates, we present it.

\begin{lemma}\label{EvolutionLemma4}
Let $\gamma$ satisfy $\oo{\ref{E:gammaprop}}$. Then for an immersion $f:\Sigma\rightarrow\mathbb{R}^{3}$ satisfying 
\begin{equation}
\int_{\cc{\gamma>0}}{\norm{A}^{2}\,d\mu}\leq\varepsilon_{0}\label{Curvaturesmallness}
\end{equation}
for $\varepsilon_{0}>0$ sufficiently small, we have
\begin{multline*}
  \intM{\nablanorm{2}{A}{2}\norm{A}^{2}\gamma^{s}}+\intM{\norm{\nabla A}^{2}\norm{A}^{4}\gamma^{s}}
\leq c\int_{\cc{\gamma>0}}{\norm{A}^{2}\,d\mu}\Bigl(\intM{\nablanorm{3}{A}{2}\gamma^{s}}+\cgam^{6}\int_{\cc{\gamma>0}}{\norm{A}^{2}\,d\mu}\Bigr).
\end{multline*}
\end{lemma}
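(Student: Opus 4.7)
My plan is to follow the template established by Kuwert and Sch\"atzle~\cite{Kuwert1,Kuwert2}: reduce both integrals on the left-hand side to an $L^\infty$-bound on $A$ via tensor-product interpolation, bound $\|A\|_\infty$ via a Sobolev-type estimate, and use the smallness hypothesis to absorb the resulting self-referential term. Introduce $E := \int_{[\gamma>0]} |A|^2\,d\mu$, $K := \int_\Sigma |\nabla_{(3)}A|^2 \gamma^s\,d\mu$, and let $J$ denote the sum of the two integrals on the left-hand side of the lemma; the target is $J \leq cE(K + c_\gamma^6 E)$.

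First, apply Proposition \ref{EvolutionProposition4} to $T = A$ with $(r,k) = (4,2)$ for the first integral in $J$ and with $(r,k) = (6,1)$ for the second, obtaining
\begin{align*}
\int_\Sigma |\nabla_{(2)}A|^2 |A|^2 \gamma^s\,d\mu
&\leq c\|A\|_\infty^2\Bigl(\int_\Sigma |\nabla_{(2)}A|^2 \gamma^s\,d\mu + c_\gamma^4 E\Bigr), \\
\int_\Sigma |\nabla A|^2 |A|^4 \gamma^s\,d\mu
&\leq c\|A\|_\infty^4\Bigl(\int_\Sigma |\nabla A|^2 \gamma^s\,d\mu + c_\gamma^2 E\Bigr).
\end{align*}
Then iterate Lemma \ref{EvolutionLemma3} (together with $\gamma\leq 1$) to absorb each intermediate quantity $\int_\Sigma |\nabla_{(k)}A|^2\gamma^s$ for $k\in\{1,2\}$ into $\delta K + c_\delta c_\gamma^{2k}E$. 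After absorbing constants and using $c_\gamma\geq 1$, the two bounds combine to
\[
J \leq c\bigl(\|A\|_\infty^2 + \|A\|_\infty^4\bigr)\bigl(K + c_\gamma^6 E\bigr).
\]

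Next, control $\|A\|_\infty$ on $[\gamma>0]$ via Proposition \ref{EvolutionProposition2} applied with $T = A$, using a nested cutoff $\tilde\gamma\geq\gamma$ equal to $1$ on $[\gamma>0]$ with derivative bounds comparable to $c_\gamma$. This yields $\|A\|_\infty^6 \leq cE^2(K + J + c_\gamma^6 E)$, a bound that is crucially self-referential in $J$ because the fourth and fifth terms on the right-hand side of Proposition \ref{EvolutionProposition2} coincide up to constants with the two integrals making up $J$.

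The final step is absorption. Writing $Y := K + c_\gamma^6 E$, raising the previous bound to powers $1/3$ and $2/3$, using $(Y+J)^\alpha \leq Y^\alpha + J^\alpha$ for $\alpha\in[0,1]$, and applying Young's inequality to the resulting cross-terms $YJ^{1/3}$ and $YJ^{2/3}$ produces an inequality of the schematic form $J \leq cEY + cE^\kappa J$ for some $\kappa>0$. Choosing $\varepsilon_0$ small enough that $c\,\varepsilon_0^\kappa \leq \tfrac{1}{2}$ permits the $J$-term on the right to be absorbed, yielding $J \leq cEY$.

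The principal obstacle is the self-referential appearance of $J$ in the $L^\infty$-bound for $A$; closing the loop requires the fractional exponents from Proposition \ref{EvolutionProposition2} to balance in such a way that $E$ enters the coefficient of $J$ with a strictly positive power, so that the smallness hypothesis $E\leq \varepsilon_0$ truly decouples the estimate. A secondary technical point is constructing the nested cutoff needed to transfer Proposition \ref{EvolutionProposition2}'s bound from $[\gamma=1]$ to $[\gamma>0]$ without losing control of the constants.
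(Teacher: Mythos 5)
Your approach differs from the paper's in a crucial way: where you route through an $L^\infty$ bound on $A$ via Proposition \ref{EvolutionProposition2}, the paper's proof applies the Michael--Simon inequality (Theorem \ref{MichaelSimon}) directly to $u=|\nabla A||A|^2\gamma^{s/2}$ and $u=|\nabla_{(2)}A||A|\gamma^{s/2}$ and then uses Cauchy--Schwarz to peel off a factor of $E:=\int_{[\gamma>0]}|A|^2\,d\mu$ multiplying terms that absorb. This is not a cosmetic difference: the $L^\infty$ detour runs into two genuine obstructions.

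First, a support mismatch. Proposition \ref{EvolutionProposition4} needs $\|A\|_{\infty,[\gamma>0]}$, but Proposition \ref{EvolutionProposition2} only gives $\|A\|_{\infty,[\gamma=1]}$. Your nested cutoff $\tilde\gamma$ with $\tilde\gamma\equiv 1$ on $[\gamma>0]$ forces $[\tilde\gamma>0]\supsetneq[\gamma>0]$, so the right-hand side of Proposition \ref{EvolutionProposition2} applied to $\tilde\gamma$ involves $\tilde\gamma$-weighted integrals (not the $\gamma$-weighted integrals constituting $J$) and the factor $\|A\|_{2,[\tilde\gamma>0]}^4$, which hypothesis \eqref{Curvaturesmallness} does not control. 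The self-referential structure you are counting on therefore does not materialise, and the transfer step silently adds hypotheses the lemma does not assume.

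Second, and more fundamentally, the final absorption produces the wrong power of $K$. Even granting a schematic bound $\|A\|_\infty^6\le cE^2(K+J+\cgam^6 E)$ with $K:=\intM{\nablanorm{3}{A}{2}\gamma^s}$, the dimensionally correct version of your intermediate step (Lemma \ref{EvolutionLemma3} carries a factor $\cgam^{-1}$ per derivative) reads
\[
J\le c\bigl(\|A\|_\infty^2\cgam^{-2}+\|A\|_\infty^4\cgam^{-4}\bigr)\bigl(K+\cgam^6 E\bigr).
\]
Setting $Y:=K+\cgam^6 E$ and inserting $\|A\|_\infty^2\le cE^{2/3}(Y+J)^{1/3}$, the leading term is $cE^{2/3}\cgam^{-2}(Y+J)^{1/3}Y$. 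Splitting off a small multiple of $J$ by Young's inequality leaves a remainder of order $E\cgam^{-3}Y^{3/2}$ (and, when $J\le Y$, directly $E^{2/3}\cgam^{-2}Y^{4/3}$); either way the right-hand side is strictly superlinear in $Y$, hence in $K$, whereas the target $cE(K+\cgam^6 E)$ is linear in $K$. There is no smallness available for $K$, so taking $\varepsilon_0$ small cannot close this gap. Proposition \ref{EvolutionProposition2} only supplies $\|A\|_\infty^2\lesssim E^{2/3}Y^{1/3}$; the extra $Y^{1/3}$ is fatal. The paper avoids it entirely because Michael--Simon plus Cauchy--Schwarz produces the dimensionless factor $E$ in one step, so $Y$ appears exactly once.
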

\begin{proof}
The second term can be estimated via Theorem \ref{MichaelSimon} with $u=\norm{\nabla A}\norm{A}^{2}\gamma^{\frac{s}{2}}$:
\begin{align}
&\intM{\norm{\nabla A}^{2}\norm{A}^{4}\gamma^{s}}\nonumber\\
&\leq c\Bigl(\intM{\norm{\nabla_{\oo{2}}A}\norm{A}^{2}\gamma^{\frac{s}{2} }}+\intM{\norm{\nabla A}^{2}\norm{A}\gamma^{\frac{s}{2} }}+\cgam\intM{\norm{\nabla A}\norm{A}^{2}\gamma^{\frac{s}{2}-1}}
 +\intM{\norm{\nabla A}\norm{A}^{3}\gamma^{\frac{s}{2} }}\Bigr)^{2}\nonumber\\
&\leq c\llll{A}_{2,\cc{\gamma>0}}^{2}\Bigl(\intM{\nablanorm{3}{A}{2}\gamma^{s}}+\intM{\nablanorm{2}{A}{2}\norm{A}^{2}\gamma^{s}}
+\intM{\norm{\nabla A}^{2}\norm{A}^{4}\gamma^{s}}\nonumber\\
&\quad +\intM{\norm{\nabla A}^{4}\gamma^{6}}+\cgam^{6}\A{2}\Bigr).\label{EvolutionLemma4,1}
\end{align}
Here we have used integration by parts, the Cauchy-Schwarz inequality and Lemma $\ref{EvolutionLemma3}$. For the second last term in $\oo{\ref{EvolutionLemma4,1}}$ we use integration by parts and Lemma $\ref{EvolutionLemma3}$:
\begin{align*}
  &\intM{\norm{\nabla A}^{4}\gamma^{s}}\leq3\intM{\norm{\nabla_{\oo{2}}A}\norm{\nabla A}^{2}\norm{A}\gamma^{s}}+s\, \cgam\intM{\norm{\nabla A}^{3}\norm{A}\gamma^{s-1}}\\
&\leq\frac{1}{2}\intM{\norm{\nabla A}^{4}\gamma^{s}}+c\Bigl(\intM{\nablanorm{2}{A}{2}\norm{A}^{2}\gamma^{s}}+\intM{\norm{\nabla A}^{2}\norm{A}^{4}\gamma^{s}}\Bigr)
+c \, \cgam^{4}\intM{\norm{\nabla A}^{2}\gamma^{s-4}},
\end{align*}
which implies that
\begin{multline*}
\intM{\norm{\nabla A}^{4}\gamma^{s}}\leq c\Bigl(\intM{\nablanorm{3}{A}{2}\gamma^{s}}+\intM{\nablanorm{2}{A}{2}\norm{A}^{2}\gamma^{s}}
+\intM{\norm{\nabla A}^{2}\norm{A}^{4}\gamma^{s}}+\cgam^{6}\A{2}\Bigr)\,.
\end{multline*}
For the second term above, we use Theorem \ref{MichaelSimon} with $u=\norm{\nabla_{\oo{2}}A}\norm{A}\gamma^{\frac{s}{2}}$:
\begin{multline}
  \intM{\nablanorm{2}{A}{2}\norm{A}^{2}\gamma^{s}}
   \leq c\int_{\cc{\gamma>0}}{\norm{A}^{2}\,d\mu}\Bigl(\intM{\nablanorm{3}{A}{2}\gamma^{s}} 
  +\cgam^{2}\intM{\nablanorm{2}{A}{2}\gamma^{s-2}}\\
	+\intM{\nablanorm{2}{A}{2}\norm{A}^{2}\gamma^{s}}\Bigr)+c\intM{\nablanorm{2}{A}{2}\gamma^{4}}\intM{\norm{\nabla A}^{2}\gamma^{2}}.\label{EvolutionLemma4,2}
\end{multline}
The last term in $\oo{\ref{EvolutionLemma4,2}}$ can be dealt with via integration by parts, the Cauchy-Schwarz inequality and the interpolation inequality from Lemma $\ref{EvolutionLemma3}$:
\begin{align*}
&\intM{\nablanorm{2}{A}{2}\gamma^{4}}\intM{\norm{\nabla A}^{2}\gamma^{2}}\\
&\leq c\oo{\intM{\norm{\nabla_{\oo{3}}A}\norm{\nabla A}\gamma^{4}}+\cgam\intM{\norm{\nabla_{\oo{2}}A}\norm{\nabla A}\gamma^{3} }}
\oo{\intM{\norm{\nabla_{\oo{2}}A}\norm{A}\gamma^{2}}+\cgam\intM{\norm{\nabla A}\norm{A}\gamma}}\\
&\leq\frac{1}{2}\intM{\nablanorm{2}{A}{2}\gamma^{4}}\intM{\norm{\nabla A}^{2}\gamma^{2}}
+c\A{2}\oo{\intM{\nablanorm{3}{A}{2}\gamma^{6}}+\cgam^{6}\A{2}}.
\end{align*}
Hence
\begin{equation*}
\intM{\nablanorm{2}{A}{2}\gamma^{4}}\intM{\norm{\nabla A}^{2}\gamma^{2}}
\leq c\A{2}\oo{\intM{\nablanorm{3}{A}{2}\gamma^{6}}+\cgam^{6}\A{2}}.
\end{equation*}
Substituting this result into $\oo{\ref{EvolutionLemma4,2}}$ and combining with $\oo{\ref{EvolutionLemma4,1}}$ then yields
\begin{align*}
&\intM{\nablanorm{2}{A}{2}\norm{A}^{2}\gamma^{6}}+\intM{\norm{\nabla A}^{2}\norm{A}^{4}\gamma^{6}}
\\ \ \ &\leq c\A{2}\Bigl(\intM{\nablanorm{3}{A}{2}\gamma^{6}}+\intM{\nablanorm{2}{A}{2}\norm{A}^{2}\gamma^{6}}
+\intM{\norm{\nabla A}^{2}\norm{A}^{4}\gamma^{6}}+\cgam^{6}\A{2}\Bigr),
\end{align*}
so that absorbing and multiplying out yields the statement of the lemma.
\end{proof}

\end{section}

\begin{section}{A-priori estimates for $\vn{\nabla_{(k)}A}^2_{2,\gamma^s}$ along the flow}

The structure of this argument is similar to that used for \cite[Proposition
4.6]{Kuwert2}, although the details are necessarily different.
We begin by applying the evolution equations and interpolation inequalities
above to obtain local control on the concentration of curvature $\vn{A}^2_{2,\gamma^s}$.

\begin{proposition}\label{EvolutionProposition3}
Let $f:\Sigma\times\cc{0,T^{*}}\rightarrow\mathbb{R}^{3}$ satisfy
$\eqref{trilaplacianIntro1}$, and let $\gamma$ be a cutoff function
satisfying $\oo{\ref{E:gammaprop}}$. Then there is an absolute constant
$\varepsilon_{0}>0$ such that if
\[
\sup_{\cc{0,T^{*} }}\int_{\cc{\gamma>0}}{\norm{A}^{2}\,d\mu}\leq\varepsilon_{0}
\]
then for any $t\in\cc{0,T^{*}}$ we have
\begin{align*}
\int_{\cc{\gamma=1}}{\norm{A}^{2}\,d\mu}+\int_{0}^{t}{\int_{\cc{\gamma=1}}{\oo{\nablanorm{3}{A}{2}+\nablanorm{2}{A}{2}\norm{A}^{2}+\norm{\nabla A}^{2}\norm{A}^{4}}\,d\mu}\,d\tau}
\leq\int_{\cc{\gamma>0}}{\norm{A}^{2}\,d\mu}\Big|_{t=0}+c \, \cgam^{6}\, \varepsilon_{0}\, t.
\end{align*}
\end{proposition}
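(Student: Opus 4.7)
The plan is to establish a Gronwall-type differential inequality for the localised energy $\intM{\norm{A}^{2}\gamma^{s}}$, starting from Proposition \ref{EvolutionProposition1} with $k=0$ and using the smallness hypothesis to absorb the nonlinear term into $\intM{\nablanorm{3}{A}{2}\gamma^{s}}$ on the left, then integrating in time. Concretely, for $k=0$ and some $s\ge 6$, Proposition \ref{EvolutionProposition1} gives
\begin{equation*}
\frac{d}{dt}\intM{\norm{A}^{2}\gamma^{s}} + \oo{2-\delta}\intM{\nablanorm{3}{A}{2}\gamma^{s}} \le \intM{(P_{3}^{4}\oo{A}\star A)\gamma^{s}} + c\,\cgam^{6}\,\varepsilon_{0},
\end{equation*}
so the core task is to control $\intM{(P_{3}^{4}\oo{A}\star A)\gamma^{s}}$, which is a finite sum of schematic terms of the form $\nabla_{\oo{i_{1}}}A\star\nabla_{\oo{i_{2}}}A\star\nabla_{\oo{i_{3}}}A\star A$ with $i_{1}+i_{2}+i_{3}=4$.

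For the single summand containing $\nabla_{\oo{4}}A$, I would integrate by parts once, producing contributions of the types $\nabla_{\oo{3}}A\star\nabla A\star A\star A$ and $\nabla_{\oo{3}}A\star A^{3}\star\nabla\gamma\,\gamma^{s-1}$, alongside the remaining summands which already have every factor differentiated at most twice. Cauchy-Schwarz, the Peter-Paul inequality, \eqref{E:gammaprop}, and Lemma \ref{EvolutionLemma3} (to absorb the $\gamma$-derivative contributions into $\cgam^{6}\,\llll{A}_{2,\cc{\gamma>0}}^{2}$) then yield
\begin{equation*}
\intM{(P_{3}^{4}\oo{A}\star A)\gamma^{s}} \le \eta\intM{\nablanorm{3}{A}{2}\gamma^{s}} + c\intM{\oo{\nablanorm{2}{A}{2}\norm{A}^{2} + \norm{\nabla A}^{2}\norm{A}^{4}}\gamma^{s}} + c\,\cgam^{6}\,\varepsilon_{0}.
\end{equation*}
Lemma \ref{EvolutionLemma4} bounds the middle integrand by $c\varepsilon_{0}\bigl(\intM{\nablanorm{3}{A}{2}\gamma^{s}} + \cgam^{6}\,\varepsilon_{0}\bigr)$, so by choosing $\delta,\eta$ small and $\varepsilon_{0}$ sufficiently small, the $\intM{\nablanorm{3}{A}{2}\gamma^{s}}$ contributions on the right absorb into the $\oo{2-\delta}\intM{\nablanorm{3}{A}{2}\gamma^{s}}$ on the left, producing
\begin{equation*}
\frac{d}{dt}\intM{\norm{A}^{2}\gamma^{s}} + c_{1}\intM{\nablanorm{3}{A}{2}\gamma^{s}} \le c\,\cgam^{6}\,\varepsilon_{0}.
\end{equation*}
A second application of Lemma \ref{EvolutionLemma4} shows that the integrands $\nablanorm{2}{A}{2}\norm{A}^{2}\gamma^{s}$ and $\norm{\nabla A}^{2}\norm{A}^{4}\gamma^{s}$ are each dominated by $c\intM{\nablanorm{3}{A}{2}\gamma^{s}} + c\cgam^{6}\varepsilon_{0}$, so they can be transferred onto the left after enlarging the constant on the right.

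Integrating the resulting differential inequality in $t$ from $0$ to $t\in[0,T^{*}]$, and using $0\le\gamma\le 1$ (so the initial datum on the right is controlled by $\int_{\cc{\gamma>0}}\norm{A}^{2}\,d\mu\big|_{t=0}$) together with $\gamma^{s}\ge \chi_{\cc{\gamma=1}}$ (so each spatial integral on the left majorises the one taken over $\cc{\gamma=1}$), yields the claimed estimate. The principal obstacle is the integration-by-parts bookkeeping for the $\nabla_{\oo{4}}A\star A^{3}$ piece: every summand produced must either fit into one of the categories handled by Lemma \ref{EvolutionLemma4} or be absorbable via \eqref{E:gammaprop} and Lemma \ref{EvolutionLemma3} into $c\cgam^{6}\varepsilon_{0}$, with no residual $\intM{\nablanorm{3}{A}{2}\gamma^{s}}$ surviving with a coefficient outside the absorbable range.
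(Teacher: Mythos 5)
Your overall plan is correct and follows the same strategy as the paper: apply Proposition \ref{EvolutionProposition1} with $k=0$, integrate by parts on the $\nabla_{(4)}A$ piece of the $P$-term, absorb via the smallness hypothesis and Lemma \ref{EvolutionLemma4}, then integrate in time and use $0\le\gamma\le1$. The final absorption and the transfer of the $\nablanorm{2}{A}{2}\norm{A}^2$ and $\norm{\nabla A}^2\norm{A}^4$ terms back to the left side also works as you describe, and matches what the paper does.

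However, there is a genuine gap in your claimed $P$-term estimate. When you integrate by parts on $\intM{(\nabla_{(4)}A\star A\star A\star A)\gamma^s}$, the $\gamma$-derivative produces a term schematically $\intM{(\nabla_{(3)}A\star A\star A\star A\star\nabla\gamma)\gamma^{s-1}}$, which after Cauchy--Schwarz and \eqref{E:gammaprop} leaves a residual of order $\cgam^2\intM{\norm{A}^6\gamma^{s-2}}$. This is not one of $\nablanorm{3}{A}{2}\gamma^s$, $\nablanorm{2}{A}{2}\norm{A}^2\gamma^s$, $\norm{\nabla A}^2\norm{A}^4\gamma^s$, or $c\cgam^6\varepsilon_0$, and it is not controlled by Lemma \ref{EvolutionLemma3}: that lemma interpolates $\vn{\nabla_{(k)}A}_{p,\gamma^s}$ between $\vn{\nabla_{(k+1)}A}_{p,\gamma^{s+p}}$ and $\vn{A}_{2,\gamma^{s-kp}}$, and applying it here (with $k=0$, $p=6$) just produces a new term in $\intM{\norm{\nabla A}^6}$ rather than reducing to $\cgam^6\varepsilon_0$. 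You need an additional Sobolev-type argument: the paper uses the Michael--Simon inequality (Theorem \ref{MichaelSimon}) with $u=\norm{A}^3\gamma^2$, followed by Cauchy--Schwarz and Lemma \ref{EvolutionLemma4}, to show $\cgam^2\intM{\norm{A}^6\gamma^4}\le c\varepsilon_0\bigl(\intM{\nablanorm{3}{A}{2}\gamma^6}+\cgam^6\varepsilon_0\bigr)$. Similarly, the summand $\nabla_{(2)}A\star\nabla A\star\nabla A\star A$ produces a $\norm{\nabla A}^4\gamma^s$ contribution under Cauchy--Schwarz which your declared bound omits; this too needs the auxiliary interpolation inside Lemma \ref{EvolutionLemma4}'s proof (which shows $\intM{\norm{\nabla A}^4\gamma^s}$ is bounded by the controlled quantities). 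You acknowledge the bookkeeping for the $\nabla_{(4)}A\star A^3$ piece as the principal obstacle, but the toolbox you list (Cauchy--Schwarz, Peter--Paul, \eqref{E:gammaprop}, Lemma \ref{EvolutionLemma3}) is not enough to close it: Theorem \ref{MichaelSimon} is the missing ingredient.
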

\begin{proof}
Utilising Proposition $\ref{EvolutionProposition1}$ with $s=6,k=0$, yields
\begin{equation}
\frac{d}{dt}\intM{\norm{A}^{2}\gamma^{6}}+\oo{2-\delta}\intM{\nablanorm{3}{A}{2}\gamma^{6}}
\leq\intM{\cc{P_{3}^{4}\oo{A}\star A}\gamma^{6}}+c\, \cgam^{6}\A{2}.\label{EvolutionProposition3,1}
\end{equation}
We estimate the $P$-style terms via integration by parts and the Cauchy-Schwarz inequality, whilst utilising our inequalities from Lemma $\ref{EvolutionLemma4}$:
\begin{align}
\intM{\cc{P_{3}^{4}\oo{A}\star A}\gamma^{6}}
 &=\intM{\oo{\nabla_{\oo{4}}A\star A\star A\star A}\gamma^{6}}\notag\\
 &\qquad +\intM{\cc{\oo{\nabla_{\oo{3}}A\star \nabla A\star
A+\nabla_{\oo{2}}A\star\nabla_{\oo{2}}A\star A+\nabla_{\oo{2}}A\star\nabla
A\star\nabla A}\star A}\gamma^{6}}\nonumber\\
&\leq\intM{\oo{\nabla_{\oo{3}}A\star\nabla A\star A\star A}\gamma^{6}}+\intM{\oo{\nabla_{\oo{3}}A\star A\star A\star A\star\nabla\gamma}\gamma^{5}}\nonumber\\
&\qquad+\delta\intM{\nablanorm{3}{A}{2}\gamma^{6}}+c_{\delta}\intM{\oo{\nablanorm{2}{A}{2}\norm{A}^{2}+\norm{\nabla A}^{2}\norm{A}^{4}+\norm{\nabla A}^{4}}\gamma^{6}}\nonumber\\
&\leq\delta\intM{\nablanorm{3}{A}{2}\gamma^{6}}+c_{\delta}\intM{\oo{\nablanorm{2}{A}{2}\norm{A}^{2}+\norm{\nabla A}^{2}\norm{A}^{4}+\norm{\nabla A}^{4}}\gamma^{6}}\nonumber\\
&\qquad+c \, \cgam^{2}\intM{\norm{A}^{6}\gamma^{4}}.\label{EvolutionProposition3,2}
\end{align}
The last term in $\oo{\ref{EvolutionProposition3,2}}$ can be estimated via
Theorem \ref{MichaelSimon}, the Cauchy-Schwarz inequality and Lemma
$\ref{EvolutionLemma4}$:
\begin{align*}
\cgam^{2}\intM{\norm{A}^{6}\gamma^{4}}&\leq c \, \cgam^{2}\oo{\intM{\norm{\nabla A}\norm{A}\gamma^{2}}+\cgam\intM{\norm{A}^{3}\gamma}+\intM{\norm{A}^{4}\gamma^{2} }}^{2}\\
&\leq c\A{2}\Bigl(\intM{\nablanorm{3}{A}{2}\gamma^{6}}+\intM{\norm{\nabla A}^{2}\norm{A}^{4}\gamma^{6}}
+\cgam^{2}\intM{\norm{A}^{6}\gamma^{4}}+\cgam^{6}\A{2}\Bigr).
\end{align*}
Hence for $\varepsilon_{0}$ sufficiently small, Lemma $\ref{EvolutionLemma4}$ tells us that
\[
\cgam^{2}\intM{\norm{A}^{6}\gamma^{4}}\leq c\int_{\cc{\gamma>0}}{\norm{A}^{2}\,d\mu}\oo{\intM{\nablanorm{3}{A}{2}\gamma^{6}}+\cgam^{6}\int_{\cc{\gamma>0}}{\norm{A}^{2}\,d\mu}}.
\]
Combining this with $\oo{\ref{EvolutionProposition3,2}}$ and substituting into $\oo{\ref{EvolutionProposition3,1}}$ then yields
\begin{equation}
\frac{d}{dt}\intM{\norm{A}^{2}\gamma^{6}}+\frac{3}{2}\intM{\nablanorm{3}{A}{2}\gamma^{6}}\nonumber
\leq c\intM{\oo{\nablanorm{2}{A}{2}\norm{A}^{2}+\norm{\nabla A}^{2}\norm{A}^{4}}\gamma^{6}}+c \, \cgam^{6}\int_{\cc{\gamma>0}}{\norm{A}^{2}\,d\mu}.\label{EvolutionProposition3,3}
\end{equation}
Using Lemma $\ref{EvolutionLemma4}$ and allowing $\varepsilon_{0}$ to be sufficiently small we then obtain
\begin{equation*}
\frac{d}{dt}\intM{\norm{A}^{2}\gamma^{6}}+\intM{\oo{\nablanorm{3}{A}{2}+\nablanorm{2}{A}{2}\norm{A}^{2}+\norm{\nabla A}^{2}\norm{A}^{4}}\gamma^{6}}
\leq c \, \cgam^{6}\int_{\cc{\gamma>0}}{\norm{A}^{2}\,d\mu}.\nonumber
\end{equation*}
Integrating over $\cc{0,t}$ and using $\cc{\gamma=1}\subset\cc{\gamma>0}$, $0\leq\gamma\leq1$ completes the proof.
\end{proof}
We now control the evolution of $\vn{\nabla_{(k)}A}^2_{2,\gamma^s}$ for $k\ge1$.
Our goal is to eventually apply the interpolation inequalities to the resultant
estimate in order to prove that $\vn{\nabla_{(k)}A}^2_{2,\gamma^s}$ satisfies a
differential inequality and is therefore bounded via Gronwall's inequality.
\begin{proposition}\label{EvolutionProposition5}
Let $f:\Sigma\times\left[0,T\right)\rightarrow\mathbb{R}^{3}$ satisfy \eqref{trilaplacianIntro1}
and let $\gamma$ be a cutoff function satisfying $\oo{\ref{E:gammaprop}}$. Then
for $s\geq2k+6$ the following estimate holds:
\begin{multline*}
\frac{d}{dt}\intM{\nablanorm{k}{A}{2}\gamma^{s}}+\intM{\nablanorm{k+3}{A}{2}\gamma^{s}}\\
  \leq c\llll{A}_{\infty,\cc{\gamma>0}}^{6}\intM{\nablanorm{k}{A}{2}\gamma^{s}}+c \, \cgam^{2k}\int_{\cc{\gamma>0}}{\norm{A}^{2}\,d\mu}\oo{\cgam^{6}+\llll{A}_{\infty,\cc{\gamma>0}}^{6}}.
\end{multline*}
\end{proposition}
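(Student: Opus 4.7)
The plan is to apply Proposition \ref{EvolutionProposition1} with a small fixed $\delta$ (say $\delta=1/2$) and then focus on bounding the $P$-style term $\intM{\oo{P_3^{k+4}(A) \star \nabla_{\oo{k}}A}\gamma^{s}}$ that appears on its right-hand side. Each summand of this integrand contains four factors of $A$ with total derivative order $2k+4$, and after at most two integrations by parts against $\gamma$ we may assume no individual factor carries more than $k+2$ derivatives (each IBP producing a remainder one order lower weighted by $\nabla\gamma$, controlled by $\cgam$). Proposition \ref{EvolutionProposition4} then applies with $r=4$ and the proposition's ``$k$'' set to $k+2$ (valid since $s\geq 2k+6\geq 2(k+2)$), yielding
\[
\norm{\intM{\oo{P_3^{k+4}(A) \star \nabla_{\oo{k}}A}\gamma^{s}}}
\leq c\llll{A}_{\infty,\cc{\gamma>0}}^{2}\oo{\intM{\nablanorm{k+2}{A}{2}\gamma^{s}} + \cgam^{2(k+2)}\A{2}}.
\]

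The boundary contribution $c\llll{A}_{\infty,\cc{\gamma>0}}^{2}\cgam^{2k+4}\A{2}$, together with the $c\cgam^{2(k+3)}\A{2}$ already present from Proposition \ref{EvolutionProposition1}, slots into the desired $c\cgam^{2k}\oo{\cgam^{6}+\llll{A}_{\infty,\cc{\gamma>0}}^{6}}\A{2}$ term via Young's inequality with exponents $3$ and $3/2$, since $\llll{A}_{\infty,\cc{\gamma>0}}^{2}\cgam^{4}\leq \tfrac{1}{3}\llll{A}_{\infty,\cc{\gamma>0}}^{6}+\tfrac{2}{3}\cgam^{6}$.

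The crucial step is converting $c\llll{A}_{\infty,\cc{\gamma>0}}^{2}\intM{\nablanorm{k+2}{A}{2}\gamma^{s}}$ into an absorbable $\intM{\nablanorm{k+3}{A}{2}\gamma^{s}}$ contribution plus the desired $\llll{A}_{\infty,\cc{\gamma>0}}^{6}\intM{\nablanorm{k}{A}{2}\gamma^{s}}$ term. For this we invoke a Gagliardo--Nirenberg-type interpolation
\[
\llll{\nabla_{\oo{k+2}}A}_{2,\gamma^{s}}
\leq c\,\llll{\nabla_{\oo{k+3}}A}_{2,\gamma^{s}}^{2/3}\llll{\nabla_{\oo{k}}A}_{2,\gamma^{s}}^{1/3} + c\,\cgam^{k+2}\llll{A}_{2,\cc{\gamma>0}},
\]
natural since $k+2 = \tfrac{2}{3}(k+3)+\tfrac{1}{3}k$ and obtainable by iterating Lemma \ref{EvolutionLemma3} together with H\"older's inequality, with care to maintain the $\cgam$ scaling. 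Squaring, multiplying by $\llll{A}_{\infty,\cc{\gamma>0}}^{2}$, and applying a weighted Young's inequality with exponents $p=3/2$, $q=3$ and a small parameter $\eta>0$ gives
\[
c\llll{A}_{\infty,\cc{\gamma>0}}^{2}\llll{\nabla_{\oo{k+3}}A}_{2,\gamma^{s}}^{4/3}\llll{\nabla_{\oo{k}}A}_{2,\gamma^{s}}^{2/3}
\leq \eta\intM{\nablanorm{k+3}{A}{2}\gamma^{s}}+c_{\eta}\llll{A}_{\infty,\cc{\gamma>0}}^{6}\intM{\nablanorm{k}{A}{2}\gamma^{s}}.
\]
Choosing $\eta$ small enough to absorb against the $(2-\delta)\intM{\nablanorm{k+3}{A}{2}\gamma^{s}}$ on the left of Proposition \ref{EvolutionProposition1}, and bundling all remaining $\A{2}$-type remainders (again via Young's inequality) into the form $c\cgam^{2k}(\cgam^{6}+\llll{A}_{\infty,\cc{\gamma>0}}^{6})\A{2}$, yields the claim.

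The main obstacle is the careful justification of the Gagliardo--Nirenberg interpolation inequality above in the presence of the cutoff $\gamma$: Lemma \ref{EvolutionLemma3} is a one-step additive ladder, and recovering the multiplicative three-space form while retaining sharp $\cgam$-dependence requires iterating it and invoking H\"older's inequality. Once this interpolation is in hand, the remaining arguments are standard Young's-inequality absorptions, and the IBP needed to reduce to the hypothesis of Proposition \ref{EvolutionProposition4} is routine, as each IBP produces only a $\nabla\gamma$-weighted remainder of strictly lower derivative order.
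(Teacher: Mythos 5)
Your overall strategy matches the paper's: start from Proposition \ref{EvolutionProposition1}, control the $P$-style term via integration by parts and Proposition \ref{EvolutionProposition4}, then interpolate the resulting intermediate derivative norm between $\intM{\nablanorm{k+3}{A}{2}\gamma^s}$ (to absorb) and $\llll{A}_{\infty}^6\intM{\nablanorm{k}{A}{2}\gamma^s}$. But there are two real gaps.

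First, your handling of $\intM{(P_3^{k+4}(A)\star\nabla_{(k)}A)\gamma^s}$ is oversimplified. Pure integration by parts keeps you among four-factor products but produces $\nabla\gamma$-weighted remainders of \emph{odd} total derivative order (e.g.\ $\cgam\intM{|\nabla_{(k+3)}A||\nabla_{(k)}A||A|^2\gamma^{s-1}}$), to which Proposition \ref{EvolutionProposition4} does not apply --- it requires $\sum i_j = 2k'$ exactly. To resolve this the paper applies Cauchy--Schwarz after the single integration by parts, which peels off $\delta\intM{\nablanorm{k+3}{A}{2}\gamma^s}$ but also produces \emph{six-factor} products $P_6^{2k+2,k+1}$ and $\cgam^2 P_6^{2k,k}$, handled via Proposition \ref{EvolutionProposition4} with $r=6$ and giving $\llll{A}_\infty^4\intM{\nablanorm{k+1}{A}{2}\gamma^s}$ and $\cgam^2\llll{A}_\infty^4\intM{\nablanorm{k}{A}{2}\gamma^{s-2}}$ contributions; your outline, applying only $r=4$ at level $k+2$, misses these.

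Second and more seriously, the multiplicative interpolation
$\llll{\nabla_{(k+2)}A}_{2,\gamma^s} \leq c\,\llll{\nabla_{(k+3)}A}_{2,\gamma^s}^{2/3}\llll{\nabla_{(k)}A}_{2,\gamma^s}^{1/3} + c\,\cgam^{k+2}\llll{A}_{2,[\gamma>0]}$
is not established, and it is not a corollary of Lemma \ref{EvolutionLemma3}. Lemma \ref{EvolutionLemma3} is a one-step \emph{upward} ladder, additive in form, and crucially it \emph{shifts} the exponent of $\gamma$ ($\gamma^s$ goes to $\gamma^{s+p}$); iterating it does not straightforwardly yield a three-term multiplicative inequality with the same weight $\gamma^s$ on all three norms. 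You flag this as ``the main obstacle'' but do not resolve it, and the resolution is exactly where the analytical work lives. The paper sidesteps the issue entirely by proving \emph{additive} Peter--Paul inequalities (\ref{EvolutionProposition1,2})--(\ref{EvolutionProposition1,4}), each trading one derivative for a factor $\llll{A}_\infty^2$, and then substituting (\ref{EvolutionProposition1,3}) into (\ref{EvolutionProposition1,2}) and absorbing the small resulting multiple of $\llll{A}_\infty^2\intM{\nablanorm{k+2}{A}{2}\gamma^s}$. This circular-absorption scheme achieves exactly what your Young's-inequality step would achieve, but requires only the one-step interpolation that the paper actually has at its disposal. To repair your argument you would either need to prove the weighted multiplicative Gagliardo--Nirenberg inequality from scratch (being careful with the $\gamma$ exponents) or switch to the paper's additive ladder.
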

\begin{proof}
We use integration by parts, Proposition $\ref{EvolutionProposition4}$ and the Cauchy-Schwarz inequality to deal with the $P$-style terms that appear in Proposition $\ref{EvolutionProposition1}$:
\begin{align}
&\intM{ \left( P_{3}^{k+4}\oo{A}\star\nabla_{\oo{k}}A \right) \gamma^{s}}\nonumber\\
&=\intM{\cc{\oo{\nabla_{\oo{k+4}}A\star A\star A+\nabla_{\oo{k+3}}A\star\nabla A\star A}\star\nabla_{\oo{k}}A}\gamma^{s}}
+\intM{P_{4}^{2k+4,k+2}\oo{A} \cdot \gamma^{s}}\nonumber\\
&\leq\intM{\cc{\nabla_{\oo{k+3}}A\star\oo{\nabla_{\oo{k}}A\star\nabla A\star A+\nabla_{\oo{k+1}}A\star A\star A}\gamma^{s} }}\nonumber\\
&\quad +c \, \cgam\intM{\norm{\nabla_{\oo{k+3}}A}\norm{\nabla_{\oo{k}}A}\norm{A}^{2}\gamma^{s-1}}+\intM{P_{4}^{2k+4,k+2}\oo{A}\gamma^{s}}\nonumber\\
&\leq\delta\intM{\nablanorm{3}{A}{2}\gamma^{s}}+\intM{\left( P_{4}^{2k+4,k+2}\oo{A}+P_{6}^{2k+2,k+1}\oo{A} \right) \gamma^{s}}
+\cgam^{2}\intM{P_{6}^{2k,k}\oo{A}\gamma^{s-2}}\nonumber\\
&\leq\delta\intM{\nablanorm{3}{A}{2}\gamma^{s}}+c\llll{A}_{\infty,\cc{\gamma>0}}^{2}\Bigl(\intM{\nablanorm{k+2}{A}{2}\gamma^{s}}+\cgam^{2\oo{k+2}}\A{2}\Bigr)\nonumber\\
&\quad+c\llll{A}_{\infty,\cc{\gamma>0}}^{4}\oo{\intM{\nablanorm{k+1}{A}{2}\gamma^{s}}+\cgam^{2\oo{k+1}}\A{2}}\nonumber\\
&\quad +c \, \cgam^{2}\llll{A}_{\infty,\cc{\gamma>0}}^{4}\oo{\intM{\nablanorm{k}{A}{2}\gamma^{s-2}}+\cgam^{2k}\A{2}}.\label{EvolutionProposition1,1}
\end{align}
Integrating by parts, applying the Cauchy-Schwarz inequality and absorbing and multiplying out similarly as in the proof of Lemma $\ref{EvolutionLemma3}$ we obtain the following interpolation inequalities:
\begin{multline}
\llll{A}_{\infty,\cc{\gamma>0}}^{2}\intM{\nablanorm{k+2}{A}{2}\gamma^{s}}\leq \delta\intM{\nablanorm{k+3}{A}{2}\gamma^{s}}\\
+c\llll{A}_{\infty,\cc{\gamma>0}}^{4}\intM{\nablanorm{k+1}{A}{2}\gamma^{s}}
 +c \, \cgam^{2k}\A{2}\oo{\cgam^{6}+\llll{A}_{\infty,\cc{\gamma>0}}^{6}},\label{EvolutionProposition1,2}
\end{multline}
\begin{multline}
\llll{A}_{\infty,\cc{\gamma>0}}^{4}\intM{\nablanorm{k+1}{A}{2}\gamma^{s}}\leq \delta\llll{A}_{\infty,\cc{\gamma>0}}^{2}\intM{\nablanorm{k+2}{A}{2}\gamma^{s}}\\
+c\llll{A}_{\infty,\cc{\gamma>0}}^{6}\intM{\nablanorm{k}{A}{2}\gamma^{s}} +c \, \cgam^{2k}\A{2}\oo{\cgam^{6}+\llll{A}_{\infty,\cc{\gamma>0}}^{6}},\label{EvolutionProposition1,3}
\end{multline}
and
\begin{equation}
\cgam^{2}\llll{A}_{\infty,\cc{\gamma>0}}^{4} \leq \delta\intM{\nablanorm{k+3}{A}{2}\gamma^{s}}
+c\llll{A}_{\infty,\cc{\gamma>0}}^{6}\intM{\nablanorm{k}{A}{2}\gamma^{s}}+c \, \cgam^{2\oo{k+3}}\A{2}.\label{EvolutionProposition1,4}
\end{equation}
Combining $\oo{\ref{EvolutionProposition1,2}},\oo{\ref{EvolutionProposition1,3}}$ and $\oo{\ref{EvolutionProposition1,4}}$ and substituting into $\oo{\ref{EvolutionProposition1,1}}$ estimates of all the $P$-style terms. Substituting this result into Proposition $\ref{EvolutionProposition5}$ then yields the desired result.
\end{proof}

We are now in a position to prove an analogue of \cite[Proposition 4.6]{Kuwert2}.

\begin{proposition}\label{EvolutionProposition6}
Let $f:\Sigma\times\cc{0,T^{*}}\rightarrow\mathbb{R}^{3}$ satisfy
\eqref{trilaplacianIntro1} and let $\gamma$ be a cutoff function satisfying
$\oo{\ref{E:gammaprop}}$. Then there is a $\varepsilon_{0}>0$ such that if
\begin{equation}
\sup_{\cc{0,T^{*} }}\int_{\cc{\gamma>0}}{\norm{A}^{2}\,d\mu}\leq\varepsilon_{0},\label{EvolutionProposition6,1}
\end{equation}
we can conclude that for all $k\in \mathbb{N}$ there are constants $\tilde c_k$, depending only on $T^*$, $\cgam$,
$\varepsilon_0$ and
\begin{equation}
\alpha_{0}\oo{k+3}:=\sum_{j=0}^{k+3}\llll{\nabla_{\oo{j}}A}_{2,\cc{\gamma>0}}^{2}\Big|_{t=0}\label{Alphanotation}
\end{equation}
such that 
\begin{equation}
\llll{\nabla_{\oo{k}}A}_{\infty,\cc{\gamma=1}}^{2}\leq \tilde c_k.\label{EvolutionProposition6,2}
\end{equation}
\end{proposition}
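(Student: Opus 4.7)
I plan a two-stage bootstrap built on Propositions \ref{EvolutionProposition3}, \ref{EvolutionProposition5}, and \ref{EvolutionProposition2}.  Writing $\alpha_k(t):=\intM{\nablanorm{k}{A}{2}\gamma^{s}}$, Proposition \ref{EvolutionProposition5} yields the differential inequality
\[
\alpha_k'(t) \leq c\,\llll{A}_{\infty,\cc{\gamma>0}}^{6}\,\alpha_k(t) + c\,\cgam^{2k}\varepsilon_{0}\bigl(\cgam^{6}+\llll{A}_{\infty,\cc{\gamma>0}}^{6}\bigr),
\]
so a Gronwall argument will deliver $\alpha_k(t)\leq C_k$ as soon as $\llll{A}_{\infty,\cc{\gamma>0}}^{6}$ is known to be integrable in time.

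To produce this integrability I first apply Proposition \ref{EvolutionProposition2} with $T=A$: the prefactor $\llll{A}_{2,\cc{\gamma>0}}^{4}$ is controlled by $\varepsilon_{0}^{2}$ via \eqref{EvolutionProposition6,1}, and the remaining integrands on the right are precisely $\nablanorm{3}{A}{2}$, $\nablanorm{2}{A}{2}\norm{A}^{2}$, $\norm{\nabla A}^{2}\norm{A}^{4}$, plus the low-order boundary term $\cgam^{6}\llll{A}_{2,\cc{\gamma>0}}^{2}$.  Integrating the pointwise estimate over $[0,t]$ and invoking Proposition \ref{EvolutionProposition3}, which controls exactly those three integrands in a spacetime sense, yields
\[
\int_0^t \llll{A}_{\infty,\cc{\gamma=1}}^{6}\,d\tau \leq C_0(T^{*},\cgam,\varepsilon_0).
\]
Gronwall now closes on Proposition \ref{EvolutionProposition5}, producing $\alpha_k(t)\leq C_k$ on $[0,T^{*}]$ for every $k\in\mathbb{N}_{0}$, with $C_k$ depending only on $T^{*}$, $\cgam$, $\varepsilon_0$ and $\alpha_{0}(k)$.

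In the second stage I feed these bounds back through Proposition \ref{EvolutionProposition2}.  Since $\alpha_3(t)\leq C_3$ pointwise in $t$, a second application of Proposition \ref{EvolutionProposition2} with $T=A$, using Lemma \ref{EvolutionLemma4} to absorb the mixed integrands $\nablanorm{2}{A}{2}\norm{A}^{2}$ and $\norm{\nabla A}^{2}\norm{A}^{4}$, upgrades the integrated bound on $\llll{A}_{\infty}^{6}$ to a pointwise-in-time bound $\llll{A}_{\infty}(t)\leq C'$ on $[0,T^{*}]$.  Finally, Proposition \ref{EvolutionProposition2} applied with $T=\nabla_{\oo{k}}A$ expresses $\llll{\nabla_{\oo{k}}A}_{\infty,\cc{\gamma=1}}^{6}$ as $\vn{\nabla_{\oo{k}}A}_{2,\cc{\gamma>0}}^{4}\leq C_{k}^{2}$ times a sum of terms each controlled by some $C_{k+j}$, $j\leq 3$, together with suitable powers of $C'$, giving \eqref{EvolutionProposition6,2}.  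The main obstacle is the apparent circularity between pointwise and integrated control of $\llll{A}_{\infty}$: Gronwall on Proposition \ref{EvolutionProposition5} needs $\llll{A}_{\infty}^{6}$ integrable in time, while Proposition \ref{EvolutionProposition2} applied to $T=A$ appears to demand pointwise control of $\alpha_3$ first.  The resolution is that Proposition \ref{EvolutionProposition3}'s spacetime bound is already strong enough to furnish the weaker $L^{1}$-in-time bound without any pointwise information on $\alpha_3$; only after Gronwall closes is the pointwise bound on $\llll{A}_{\infty}$ recovered.
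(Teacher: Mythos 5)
Your plan follows the paper's proof closely in conceptual structure: Proposition \ref{EvolutionProposition3} furnishes the spacetime integral bounds, Proposition \ref{EvolutionProposition2} with $T=A$ converts those to an $L^1$-in-time bound on $\llll{A}_{\infty}^{6}$, Gronwall applied to Proposition \ref{EvolutionProposition5} then closes the $L^2$ bounds $\alpha_k(t)\leq C_k$, and a final application of Proposition \ref{EvolutionProposition2} with $T=\nabla_{(k)}A$ delivers \eqref{EvolutionProposition6,2}. Your diagnosis of the apparent circularity and its resolution is also correct.

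However, there is a genuine technical gap in the way you chain the propositions that would make the argument fail as written: you cannot use a single cutoff $\gamma$ throughout. Proposition \ref{EvolutionProposition3} controls spacetime integrals over $\cc{\gamma=1}$, whereas Proposition \ref{EvolutionProposition2} applied with the same $\gamma$ produces norms weighted by $\gamma^{s}$ and hence supported on all of $\cc{\gamma>0}$, which is strictly larger. The chain $\text{Prop.\,\ref{EvolutionProposition3}} \Rightarrow \text{Prop.\,\ref{EvolutionProposition2}} \Rightarrow \text{Gronwall on Prop.\,\ref{EvolutionProposition5}} \Rightarrow \text{Prop.\,\ref{EvolutionProposition2}}$ therefore requires a strictly nested family of cutoffs $\gamma_{\sigma,\tau}$ (the paper uses $\gamma_{0,1/2}$, $\gamma_{1/2,3/4}$, $\gamma_{3/4,7/8}$, $\gamma_{7/8,15/16}$) so that at each step the support $\cc{\gamma_{\sigma',\tau'}>0}$ of the new cutoff is contained in the set $\cc{\gamma_{\sigma,\tau}=1}$ where the previous estimate holds. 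Without this nesting, the inputs to each successive proposition live on a set where you have not yet established control, and the argument does not compile. This is the one substantive missing idea; everything else in your outline is sound, modulo the usual bookkeeping of the $P$-style terms in the final application of Proposition \ref{EvolutionProposition2}, which your mention of $C_{k+j}$ for $j\leq 3$ correctly anticipates.
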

\begin{proof}
We introduce a set of cutoff functions $\gamma_{\sigma,\tau}$ for $0\leq\sigma<\tau\leq1$ satisfying $\gamma_{\sigma,\tau}=1$ for $\gamma\geq\tau$ and $\gamma_{\sigma,\tau}=0$ for $\gamma\leq\sigma$. Applying Proposition $\ref{EvolutionProposition3}$ with $\gamma=\gamma_{0,\frac{1}{2}}$  yields
\begin{equation}
\int_{0}^{T^{*}}{\int_{\cc{\gamma\geq\frac{1}{2} }}{\nablanorm{3}{A}{2}+\nablanorm{2}{A}{2}\norm{A}^{2}+\norm{\nabla A}^{2}\norm{A}^{4}\,d\mu}\,d\tau}\nonumber\leq c\varepsilon_{0}\oo{1+\cgam^{6}\, T^{*}}.\label{EvolutionProposition6,3}
\end{equation}
Next, we apply Proposition $\ref{EvolutionProposition2}$ with $T=A$ and $\gamma_{\frac{1}{2},\frac{3}{4}}$. This gives
\begin{equation}
\int_{0}^{T^{*}}{\llll{A}_{\infty,\cc{\gamma\geq\frac{3}{4} }}^{6}\,d\tau}\leq c\varepsilon_{0}^{3}\oo{1+\cgam^{6}\, T^{*}}\label{EvolutionProposition6,4}.\nonumber
\end{equation}
Here we have used $\cc{\gamma\geq\frac{3}{4}}\subseteq\cc{\gamma_{\frac{1}{2},\frac{3}{4}}=1},\cc{\gamma_{\frac{1}{2},\frac{3}{4} }}\subseteq\cc{\gamma\geq\frac{1}{2}}$ and $\oo{\ref{EvolutionProposition6,4}}$. Next we look at Proposition $\ref{EvolutionProposition5}$ with $\gamma_{\frac{3}{4},\frac{7}{8}}$ and integrate over $\cc{0,T^{*}}$. This gives us for any $t \in \cc{0,T^{*}}$:
\begin{equation*}
\intM{\nablanorm{k}{A}{2}\gamma_{\frac{3}{4},\frac{7}{8}}^{s}}\leq \llll{\nabla_{\oo{k}}A}_{2,\cc{\gamma>0}}^{2}\Big|_{t=0}\\
+c \, \cgam^{2k}\varepsilon_{0}\oo{1+\cgam^{6}\, T^{*}}
+c\int_{0}^{t}{\llll{A}_{\infty,\cc{\gamma\geq\frac{3}{4} }}^{6}\intM{\nablanorm{k}{A}{2}\gamma_{\frac{3}{4},\frac{7}{8}}^{s}}\,d\tau}.
\end{equation*}
Applying Gromwall's Inequality whilst taking into account identity
$\oo{\ref{EvolutionProposition6,4}}$ gives
\begin{align*}
\int_{\cc{\gamma\geq\frac{7}{8} }}{\nablanorm{k}{A}{2}\,d\mu}&\leq c\oo{\llll{\nabla_{\oo{k}}A}_{2,\cc{\gamma>0}}^{2}\Big|_{t=0}+c \, \cgam^{2k}\varepsilon_{0}\oo{1+\cgam^{6}\, T^{*} }}e^{c\varepsilon_{0}^{3}\oo{1+\cgam^{6}\, T^{*} }}
\\
&\leq c_{k}\oo{\alpha_{0}\oo{k},\cgam,\varepsilon_{0},T^{*}}\,.
\end{align*}
Combining this with Proposition $\ref{EvolutionProposition2}$ with $\gamma_{\frac{7}{8},\frac{15}{16}}$,
\begin{align*}
\llll{\nabla_{\oo{k}}A}_{\infty,\cc{\gamma\geq\frac{15}{16} }}^{6}
&\leq c \, c_{k}^{2}\Bigl(c_{k+3}+\intM{\cc{P_{4}^{2\oo{k+2},k+2}\oo{A}+P_{6}^{2\oo{k+1},k+1}\oo{A}}\gamma_{\frac{7}{8},\frac{15}{16}}^{s}}+\cgam^{2\oo{k+3}}\varepsilon_{0}\Bigr)\\
&\leq c\oo{c_{3},c_{k},c_{k+2},c_{k+3},\varepsilon_{0},T^{*},\cgam},
\end{align*}
which proves $\oo{\ref{EvolutionProposition6,2}}$. Here we have used the inequality
\[
\llll{A}_{\infty,\cc{\gamma=1}}^{6}\leq c\, \varepsilon_{0}^{2}\oo{c_{3}+\cgam^{6}\varepsilon_{0}}
\]
which can be obtained by combining Proposition $\ref{EvolutionProposition2}$ and Lemma $\ref{EvolutionLemma4}$.
\end{proof}

We now have sufficient tools to prove Theorem \ref{LifespanTheorem}. However,
we will first sharpen our derivative bounds from Proposition
$\ref{EvolutionProposition6}$ for later application in combination with Theorem
$\ref{BlowupTheorem1}$ where we construct a blowup.
Theorem \ref{EvolutionProposition6} will also be used to obtain our preliminary
compactness result (Lemma \ref{ConvergenceLemma1}) for the flow once global
existence has been obtained.

\begin{theorem}[Interior Estimates]\label{InteriorEstimatesTheorem1}
Suppose $f:\Sigma\times\left(0,T^{*}\right]\rightarrow\mathbb{R}^{3}$ satisfies \eqref{trilaplacianIntro1} and
\begin{equation}
\sup_{0<t\leq T^{*}}\int_{f^{-1}\oo{B_{\rho}\oo{x} }}{\norm{A}^{2}\,d\mu}\leq\varepsilon_{0}\text{  for  }T^{*}\leq c\rho^{6}.\label{InteriorEstimatesTheorem1,1}
\end{equation}
Then for any $m\in\mathbb{N}_{0}$ we have at time $t\in\left(0,T^{*}\right]$ we have the estimates
\begin{equation*}
\llll{\nabla_{\oo{m}}A}_{2,f^{-1}(B_{\frac{\rho}{2}}\oo{x})} \leq c_{m}\sqrt{\varepsilon_{0}}t^{-\frac{m}{6}} \mbox{ and }
\llll{\nabla_{\oo{m}}A}_{\infty,f^{-1}(B_{\frac{\rho}{2}}\oo{x})} \leq c_{k}\sqrt{\varepsilon_{0}}t^{-\frac{m+1}{6}},
\end{equation*}
where the constants $c_{m}$ depend only upon $m$, $\rho$, $T^{*}$ and $\norm{\nabla_{\oo{m}}A}_{2,f^{-1}\oo{B_{\rho}\oo{x_{0} }} }\Big|_{t=0}$.
\end{theorem}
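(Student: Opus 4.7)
The plan is to adapt the template of \cite[Theorem~4.7]{Kuwert2} (a fourth-order result) to the present sixth-order setting. First I would fix a nested family of cutoffs $\{\gamma_m\}_{m\geq 0}$ satisfying \eqref{E:gammaprop}, all localised in $f^{-1}(B_\rho(x))$: I take $\gamma_m\equiv 1$ on $f^{-1}(B_{\rho_m}(x))$ and $\mathrm{supp}\,\gamma_m\subset f^{-1}(B_{\rho_{m-1}}(x))$, where $\rho_m:=\rho/2+2^{-m-2}\rho\searrow \rho/2$, so that $\cgam\lesssim 2^m/\rho$ at each level. The natural object to control is the scale-invariant, time-weighted energy
\[
\Phi_m(t) := t^{m/3}\intM{\nablanorm{m}{A}{2}\gamma_m^s}\,,
\]
since scaling $f\mapsto\lambda f$ sends $t\mapsto\lambda^6 t$ and leaves $\Phi_m$ unchanged; a uniform bound $\Phi_m\leq c_m\varepsilon_0$ is precisely the $L^2$ conclusion of the theorem.

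I would then proceed by induction on $m$. The base case $\Phi_0\leq c\varepsilon_0$ is Proposition \ref{EvolutionProposition3} applied with $\gamma=\gamma_0$, using $T^*\leq c\rho^6$ and $\cgam\lesssim\rho^{-1}$ to absorb the error term. For the inductive step, assume $\Phi_j\leq c_j\varepsilon_0$ for all $j\leq m-1$. I would apply Proposition \ref{EvolutionProposition5} with $\gamma=\gamma_m$, multiply by $t^{m/3}$, and use the product rule to produce
\[
\frac{d}{dt}\Phi_m + t^{m/3}\intM{\nablanorm{m+3}{A}{2}\gamma_m^s}
\leq \tfrac{m}{3t}\Phi_m + c\,\llll{A}^{6}_{\infty,\cc{\gamma_m>0}}\Phi_m + t^{m/3}E_m(t)\,,
\]
where $E_m(t):=c\cgam^{2m}\varepsilon_0\bigl(\cgam^6+\llll{A}^6_{\infty,\cc{\gamma_m>0}}\bigr)$. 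Two ingredients then close the estimate. First, the coefficient $\llll{A}_\infty^6$ must be integrable in time: combining Proposition \ref{EvolutionProposition2} (with $T=A$), Lemma \ref{EvolutionLemma4}, and the integrated gradient control of Proposition \ref{EvolutionProposition3} yields $\int_0^{T^*}\llll{A}^6_{\infty,\cc{\gamma_0>0}}\,d\tau\leq c\varepsilon_0^3$. Second, the $1/t$ prefactor in $\tfrac{m}{3t}\Phi_m$ must be neutralised by iterated interpolation (Lemma \ref{EvolutionLemma3}) exploiting the nesting $\mathrm{supp}\,\gamma_m\subset\{\gamma_{m-1}=1\}$: I rewrite $\int|\nabla_{(m)}A|^2\gamma_m^s$ as the sum of a higher-order piece $\delta\int|\nabla_{(m+3)}A|^2\gamma_m^{s+6}$ (to be absorbed into the LHS) and a lower-order piece supported on $\{\gamma_{m-1}=1\}$, the latter controlled by the inductive hypothesis by $c\varepsilon_0\tau^{-(m-1)/3}$. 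Choosing $\delta\asymp\tau^{1/3}$ so that absorption balances the singular $1/\tau$ factor produces an integrable-in-$\tau$ remainder. Integrating from $0$ to $t$ (using $\Phi_m(0)=0$) and applying Gronwall then gives $\Phi_m(t)\leq c_m\varepsilon_0$.

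For the $L^\infty$ estimate, I would apply Proposition \ref{EvolutionProposition2} with $T=\nabla_{(m)}A$ on a further-contracted cutoff, substituting the $L^2$ bounds at orders $j\in\{m,m+1,m+2,m+3\}$ just established; since each contributes a factor $t^{-j/6}$, the powers combine by scaling to yield $t^{-(m+1)/6}$ as required.

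\noindent\textbf{Main obstacle.} The technical heart of the argument is the inductive step, and specifically the joint bookkeeping of the geometrically growing constants $\cgam\sim 2^m/\rho$ produced by Lemma \ref{EvolutionLemma3}, the enlargement of cutoff from $\gamma_m$ to $\gamma_{m-1}$ required to invoke the inductive hypothesis, and the time-weight $t^{m/3}$. One must arrange these so that the term singular at $\tau=0$ is simultaneously controlled by the previous step of the induction and integrable on $(0,t]$. This is the sixth-order analogue of the arguments in \cite[Proposition~4.6]{Kuwert2} and \cite[Theorem~4.7]{Kuwert2}.
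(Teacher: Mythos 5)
Your overall architecture is right and matches the spirit of what the paper (which omits the proof, referring to Kuwert--Sch\"atzle) intends: scale-invariant time-weighted quantities $\Phi_m(t)=t^{m/3}\int_\Sigma|\nabla_{(m)}A|^2\gamma_m^s\,d\mu$, nested spatial cutoffs, induction, and Proposition \ref{EvolutionProposition2} for the passage from $L^2$ to $L^\infty$. The base case via Proposition \ref{EvolutionProposition3}, the integrability of $\|A\|_\infty^6$ in time, and the final $L^\infty$ step all check out.

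However, the mechanism you propose for killing the term $\frac{m}{3t}\Phi_m = \frac{m}{3}\,t^{m/3-1}\int|\nabla_{(m)}A|^2\gamma_m^s\,d\mu$ does not close. You interpolate $\int|\nabla_{(m)}A|^2\gamma_m^s \leq \delta\int|\nabla_{(m+3)}A|^2\gamma_m^{s+6}+c_\delta\int|\nabla_{(m-1)}A|^2\,(\ldots)$, where by Young's inequality $c_\delta\sim\delta^{-1/3}$. If you take $\delta\asymp\tau^{1/3}$, as you suggest, the higher-order piece carries the coefficient $\tau^{m/3-1}\delta \asymp \tau^{m/3-2/3}$, which for small $\tau$ dominates the available dissipation $\tau^{m/3}\int|\nabla_{(m+3)}A|^2\gamma_m^s$ on the left-hand side by an unbounded factor $\tau^{-2/3}$; absorption fails. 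If instead you take $\delta\asymp\tau$ (the only choice that makes absorption possible), then $c_\delta\asymp\tau^{-1/3}$ and the lower-order piece, controlled by the inductive hypothesis at level $m-1$ as $c\varepsilon_0\tau^{-(m-1)/3}$, contributes $\tau^{m/3-1}\cdot\tau^{-1/3}\cdot\tau^{-(m-1)/3}=\tau^{-1}$, which is not integrable on $(0,t]$. The same $\tau^{-1}$ obstruction arises if one interpolates between $m+3$ and $m-3$ (or $m+3$ and $0$) instead. This is not a bookkeeping issue of $\cgam$-constants: the scaling weight of $\frac{1}{t}\Phi_m$ is such that a purely pointwise-in-time inductive input cannot make it integrable; any interpolation route returns exactly $\tau^{-1}$ by scale-invariance.

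The repair, consistent with the Kuwert--Sch\"atzle template, is to carry the space--time dissipation as part of the induction and to induct in steps of three. Concretely, strengthen the inductive hypothesis at level $j$ to include the bound $\int_0^t\tau^{j/3}\int|\nabla_{(j+3)}A|^2\gamma_j^s\,d\mu\,d\tau\leq c_j\varepsilon_0$ (which is exactly what integrating the Gronwall'd inequality produces). Then at level $m$, because $\tau^{m/3-1}=\tau^{(m-3)/3}$ and $\gamma_m^s\leq\gamma_{m-3}^s$ on the nested supports, the offending term satisfies
\begin{equation*}
\frac{m}{3}\int_0^t\tau^{m/3-1}\int_\Sigma|\nabla_{(m)}A|^2\gamma_m^s\,d\mu\,d\tau
\leq \frac{m}{3}\int_0^t\tau^{(m-3)/3}\int_\Sigma|\nabla_{(m)}A|^2\gamma_{m-3}^s\,d\mu\,d\tau
\leq \frac{m}{3}c_{m-3}\,\varepsilon_0\,,
\end{equation*}
with no interpolation and no singular $\tau$-integral at all. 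This handles $m\geq 3$; the levels $m=1,2$ are then recovered from $m=0$ and $m=3$ by a fixed-$\delta$ interpolation of the type $\phi_1\lesssim\phi_3^{1/3}\phi_0^{2/3}$, which gives $t^{1/3}\phi_1\lesssim(t\phi_3)^{1/3}\phi_0^{2/3}\leq c\varepsilon_0$ directly. You should also say a word about why $\Phi_m(s)\to 0$ as $s\searrow 0$ (or, equivalently, integrate from $s>0$, Gronwall uniformly, and let $s\to 0$), since $\phi_m$ is only known to be finite for $t>0$. With those two corrections, your plan is a valid proof.
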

The idea of the proof is to consider a family of cutoff functions in time and
integrate over $\left(0,T^{*}\right]$. An inductive argument then gives the
desired the result. Since this is similar to \cite[Theorem 3.5]{Kuwert1}, we
have omitted it.

\end{section}

\begin{section}{Proof of the lifespan theorem}

Now that we have pointwise bounds on $\vn{\nabla_{(k)}A}^2_{2,\gamma^s}$ when
the concentration of curvature is small, we are able to proceed with our proof
of Theorem \ref{LifespanTheorem}.
The proof will rest upon contradicting the maximal time $T$ of smooth
existence. We may assume $\rho=1$ in assumption $\oo{\ref{Lifespansmallness}}$
due to scale invariance of the $\vn{A}_2^2$.
That is, under the rescaling
$\tilde{f}\oo{p,t}=f\oo{\frac{p}{\rho},\frac{t}{\rho^{6} }}$ we have
\[
\int_{f^{-1}\oo{B_{\rho} }}{\norm{A}^{2}\,d\mu}=\int_{\tilde{f}^{-1}\oo{B_{1} }}{\tilde{\norm{A}}^{2}\,d\tilde{\mu}}.\nonumber
\]
Under such a rescaling, we aim to show that
\[
T\geq\frac{1}{C}
\]
and that for $0\leq t\leq\frac{1}{C}$ the estimate
\[
\int_{f^{-1}\oo{B_{1} }}{\norm{A}^{2}\,d\mu}\leq C\, \varepsilon_{0}
\]
holds. We make the definition
\begin{equation}
\Kappa\oo{t}=\sup_{x\in\mathbb{R}^{3}}\llll{A}_{2,f^{-1}\oo{B_{1}\oo{x} }}^{2}.\label{LifespanProof1}
\end{equation}
We can cover the ball $B_{1}$ with a number of translated copies of $B_{\frac{1}{2}}$. It follows that there is an absolute constant $C_{\Kappa}$ such that
\begin{equation}
\Kappa\oo{t}\leq C_{\Kappa}\sup_{x\in\mathbb{R}^{3}}\llll{A}_{2,f^{-1}(B_{\frac{1}{2}}\oo{x})}^{2}.\label{LifespanProof2}
\end{equation}
Now because $f$ is smooth by the definition of our family of compact immersions $\Sigma_{t}$ we have $f\oo{\Sigma\times\cc{0,t}}$ compact for $t<T$, and so $\Kappa\in C\oo{\left[0,T\right)}$. Firstly we define $C_{0}$ to be the constant on the right hand side of Proposition $\ref{EvolutionProposition3}$. From this we make the definition $\lambda=\frac{1}{C_{0}\cgam^{6}}$, and 
\begin{equation}
t_{0}=\sup\left\{0\leq t\leq\min\left\{T,\lambda\right\}:\Kappa\oo{\tau}\leq3\, C_{\Kappa}\, \varepsilon_{0}\text{  for  }0\leq\tau\leq t\right\}.\label{LifespanProof3}
\end{equation}
The reason for this parameter $\lambda$ will become apparent later when we establish a contradiction.

We will go through this proof in $3$ steps, labelled $\oo{\ref{LifespanProof4}}-\oo{\ref{LifespanProof6}}$. These are as follows:
\begin{equation}
\text{ Show that }t_{0}=\text{min}\left\{T,\lambda\right\},\label{LifespanProof4}
\end{equation}
\begin{equation}
\text{Show that if }t_{0}=\lambda\text{ then we have the Lifespan Theorem},\label{LifespanProof5}
\end{equation}
and 
\begin{equation}
\text{Show that if }T\neq\infty\text{ then }t_{0}\neq T.\label{LifespanProof6}
\end{equation}
Note that $\oo{\ref{LifespanProof6}}$ is equivalent to showing that $t_{0}=T\implies T=\infty$. We claim that the three statements $\oo{\ref{LifespanProof4}},\oo{\ref{LifespanProof5}}$ and $\oo{\ref{LifespanProof6}}$ together prove the Lifespan Theorem. To see this, note that if $\oo{\ref{LifespanProof6}}$ holds then we must have either $t_{0}=T$ or $t_{0}=\lambda$. If $t_{0}=T$, then $\oo{\ref{LifespanProof6}}$ implies that $T=\infty,$ meaning that the flow exists for all time, which proves the Lifespan Theorem. If instead $t_{0}=\lambda$, then $\oo{\ref{LifespanProof5}}$ will directly give us the Lifespan Theorem. 

To begin, we note that by one of the assumptions of the theorem we have
\[
\Kappa\oo{0}=\sup_{x\in\mathbb{R}^{3}}\llll{A}_{2,f^{-1}\oo{B_{1}\oo{x} }}^{2}\Big|_{t=0}\leq\varepsilon_{0}< 3\, C_{\Kappa}\, \varepsilon_{0},
\]
and so we must have $t_{0}$ strictly positive. To see this, we note that the
continuity of $\Kappa$ and the fact that
$\Kappa\oo{0}<3\,C_{\Kappa}\,\varepsilon_{0}$ forces
$\Kappa\oo{t}<3\,C_{\Kappa}\,\varepsilon_{0}$ for some strictly positive time
period. The definition of $t_{0}$ then guarantees $t_{0}>0$.
Also, by the definition of $t_{0}$ and by continuity of $\Kappa,$ we must have 
\begin{equation}
\Kappa\oo{t_{0}}=3\, C_{\Kappa}\, \varepsilon_{0}.\label{LifespanProof7}
\end{equation}
Let us assume that $t_{0}<\text{min}\left\{\lambda, T\right\}$ and aim for a contradiction. We choose a cutoff function $\gamma$ such that
\[
\chi_{B_{\frac{1}{2}}\oo{x}}\leq\tilde{\gamma}\leq\chi_{B_{1}\oo{x}}\text{  for any }x\in\Sigma_{t}.
\]
By the initial smoothness of $\norm{A}^{2}$, we can always find a $\rho^{\star}>0$ in assumption $\oo{\ref{Lifespansmallness}}$ that is small enough to guarantee the smallness condition $\oo{\ref{EvolutionProposition6,1}}$ 
holds for every $T^{*}<t_{0}$. Note that this is equivalent to assuming the hypothesis of Proposition $\ref{EvolutionProposition3}$ on the interval $\left[0,t_{0}\right)$.
The results of Proposition $\ref{EvolutionProposition3}$ then imply that
\[
\llll{A}_{2,\cc{\gamma=1}}^{2}\leq\llll{A}_{2,\cc{\gamma>0}}^{2}\Big|_{t=0}+C_{0}\, \cgam^{6}\llll{A}_{2,\cc{\gamma>0}}^{2}t,
\]
from which we conclude
\begin{equation*}
\int_{f^{-1}(B_{\frac{1}{2}}\oo{x})}{\norm{A}^{2}\,d\mu}
\leq\int_{f^{-1}\oo{B_{1}\oo{x} }}{\norm{A}^{2}\,d\mu}\Bigg|_{t=0}
+C_{0}\,\cgam^{6}\int_{f^{-1}\oo{B_{1}\oo{x} }}{\norm{A}^{2}\,d\mu}
\leq\varepsilon_{0}+C_{0}\, \cgam^{6}\, \varepsilon_{0}\, t_{0}
\end{equation*}
for $t\in\left[0,t_{0}\right)$ and $x\in\mathbb{R}^{3}$.
It follows from the assumption that $t_{0}<\lambda$ and the definition of $\lambda$ that for $t\in\left[0,t_{0}\right)$ and $x\in\mathbb{R}^{3}$ that we have
\[
\int_{f^{-1}(B_{\frac{1}{2}}\oo{x})}{\norm{A}^{2}d\mu}\leq\varepsilon_{0}+C_{0}\, \cgam^{6}\, \varepsilon_{0}\, t_{0}<2\, \varepsilon_{0}.
\]
We deduce from $\oo{\ref{LifespanProof2}}$ that
\[
\Kappa\oo{t}\leq C_{\Kappa}\sup_{x\in\mathbb{R}^{3}}\norm{A}_{2,f^{-1}(B_{\frac{1}{2}}\oo{x})}^{2}\leq 2\, C_{\Kappa}\, \varepsilon_{0}\text{  for  }t\in\left[0,t_{0}\right).
\]
By the continuity of $\Kappa$ this means that 
\[
\lim_{t\nearrow t_{0}}\Kappa\oo{t}\leq 2\,C_{\Kappa}\,\varepsilon_{0}\,.
\]
This contradicts statement $\oo{\ref{LifespanProof7}}$ where we said that $\Kappa\oo{t_{0}}=3\, C_{\Kappa}\, t_{0}$. Hence $\oo{\ref{LifespanProof4}}$ holds.

We now note that under assumption $\oo{\ref{LifespanProof4}}$ (which was just proven) we must have either $t_{0}=T$ or $t_{0}=\lambda$. If $t_{0}=\lambda$ then because $t_{0}=\min\left\{\lambda, T\right\}$ it follows that
\[
T\geq t_{0}=\lambda=\frac{1}{C_{0}\,\cgam^{6}},
\]
and the definition of $t_{0}$ tells us that
\[
\int_{f^{-1}\oo{B_{1}\oo{x} }}{\norm{A}^{2}\,d\mu}\leq 3\, C_{\Kappa}\, \varepsilon_{0}
\]
for $x\in\Sigma_{t}$ and $t\in\cc{0,\lambda}$. Together these two statements
give us both statements $\oo{\ref{lifespan1}}$ and $\oo{\ref{lifespan2}}$ of
the theorem with $C=\max\left\{C_{0}\, \cgam^{6},3\, C_{\Kappa}\right\}$. That
is, assuming that $t_{0}=\lambda$, we have the Lifespan Theorem. This is
$\oo{\ref{LifespanProof5}}$.

Finally, we turn our attention to $\oo{\ref{LifespanProof6}}$. We will assume
\[
t_{0}=T\neq\infty
\] 
and then aim to contradict the maximality of $T$. We note that we have not included the case $T=\infty$ because in that case the second part of the Lifespan Theorem $\oo{\ref{lifespan1}}$ would hold automatically. Additionally, we would have $\lambda\leq T$ so that $t_{0}=\lambda=T$ and then statement $\oo{\ref{lifespan2}}$ with $C=2\, C_{\Kappa}$ would directly follow from our earlier estimate:
\begin{equation*}
\Kappa\oo{t}\leq 2\, C_{\Kappa}\, \varepsilon_{0},t\in\left[0,t_{0}\right).
\end{equation*}
We can also exclude the case $T<\lambda$ for the sake of our argument, because
by $\oo{\ref{LifespanProof4}}$ it would follow that $t_{0}=T$, from which we
could conclude (using $\oo{\ref{LifespanProof5}}$) the Lifespan Theorem. Next,
Proposition $\ref{EvolutionProposition6}$ allows us to establish a uniform
bound on all derivatives of $A$ up to and including the final time $T$. A
standard argument similar to that of Hamilton \cite{Hamilton1} allows us to
conclude that $\Sigma_{t}\rightarrow\Sigma_{T}$ in the $C^{\infty}$ topology
and that each $f(\cdot,t)$ induces an equivalent family of metrics, implying
the uniqueness (up to reparametrisation) of our limiting object $\Sigma_{T}$ up
to rigid motions.
Hence we may define a new family of immersions
$h:\Sigma\times\left[0,\delta\right)\rightarrow\mathbb{R}^{3}$ satisfying
\eqref{trilaplacianIntro1}, given by $h\oo{\cdot,t}=f\oo{\cdot,t+T}$, so that
$h$ has smooth initial data $f\oo{\cdot,T}$.
Theorem \ref{TMste} implies existence of the flow $h$ for some positive time
$\delta>0$, extending the smooth existence of our original flow to the interval
$\cc{0,T+\delta}\supset\cc{0,T}$, contradicting the maximality of $T$.
Hence we have established $\oo{\ref{LifespanProof6}}$.
This means we have proven steps
$\oo{\ref{LifespanProof4}},\oo{\ref{LifespanProof5}}$ and
$\oo{\ref{LifespanProof6}}$, which as noted earlier completes the proof of Theorem \ref{LifespanTheorem}.
\end{section}

\begin{section}{A pointwise estimate for the tracefree curvature}

In this section we present some new uses of geometric quantities involving iterated covariant derivatives of $A$, $A^o$ and $H$.
These will include local integral estimates and a multiplicative Sobolev-type inequality, aimed at the establishemnt of a pointwise bound
for the tracefree curvature $A^{o}$ that only depends on $\llll{\nabla\Delta
H}_{2,\cc{\gamma>0}}^{2}$, $\llll{A^{o}}_{2,\cc{\gamma>0}}^{2}$, and the
gradient bound for $\gamma$.

This holds for any immersed surface with sufficient weak regularity, and so is of independent interest.
For our purposes in this paper the estimate will also be useful in the proof of Theorem \ref{GapTheorem1} in the next section.

We begin with Simons' identity-type relations.

\begin{lemma}\label{TracefreeLemma1}
The following identities hold:
\begin{equation}
\Delta A^{o}=S^{o}\oo{\nabla_{\oo{2}}H}+\frac{1}{2}H^{2}A^{o}-\norm{A^{o}}^{2}A^{o}\label{TracefreeLemma1,1}
\end{equation}
and
\begin{equation}
\Delta\nabla H=\nabla\Delta H+\frac{1}{4}\nabla H\norm{H}^{2}+\nabla H\star\oo{HA^{o}+A^{o}\star A^{o}}.\label{TracefreeLemma1,2}
\end{equation}
Consequently, there is a universal constant $c$ such that
\begin{equation}
-\inner{\Delta\nabla H,\nabla H}\leq-\inner{\nabla\Delta H,\nabla H}-\frac{1}{8}\norm{\nabla H}^{2}H^{2}+c\norm{\nabla H}^{2}\norm{A^{o}}^{2}\,.\label{TracefreeLemma1,3}
\end{equation}
\end{lemma}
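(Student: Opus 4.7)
The plan is to prove the three assertions in sequence: (\ref{TracefreeLemma1,1}) via a two-dimensional specialisation of Simons' identity, (\ref{TracefreeLemma1,2}) via the Ricci commutation formula applied to the scalar $H$, and (\ref{TracefreeLemma1,3}) as an algebraic consequence of (\ref{TracefreeLemma1,2}).

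For (\ref{TracefreeLemma1,1}), I would start from the classical codimension-one Simons identity
\[
\Delta A_{ij}=\nabla_{i}\nabla_{j}H+H\,(A^{2})_{ij}-\norm{A}^{2}A_{ij},
\]
where $(A^{2})_{ij}=g^{kl}A_{ik}A_{lj}$. Since $\Sigma$ is two-dimensional, Cayley--Hamilton yields $(A^{2})_{ij}=HA_{ij}-Kg_{ij}$ with Gauss curvature $K=\tfrac{1}{2}(H^{2}-\norm{A}^{2})$. Substituting and decomposing via $A=A^{o}+\tfrac{1}{2}Hg$, $\norm{A}^{2}=\norm{A^{o}}^{2}+\tfrac{1}{2}H^{2}$, and then forming $\Delta A^{o}=\Delta A-\tfrac{1}{2}g\,\Delta H$, all of the pure-trace contributions proportional to $g_{ij}$ cancel exactly, leaving
\[
\Delta A^{o}_{ij}=\bigl(\nabla_{i}\nabla_{j}H-\tfrac{1}{2}g_{ij}\Delta H\bigr)+\bigl(\tfrac{1}{2}H^{2}-\norm{A^{o}}^{2}\bigr)A^{o}_{ij},
\]
in which the first bracket is precisely $S^{o}\oo{\nabla_{\oo{2}}H}_{ij}$.

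For (\ref{TracefreeLemma1,2}), I would invoke the Ricci commutation identity
\[
\Delta\nabla_{i}f-\nabla_{i}\Delta f=\mathrm{Ric}_{i}{}^{j}\nabla_{j}f,
\]
valid for any scalar $f$, applied with $f=H$. On a $2$-surface $\mathrm{Ric}=Kg$, so the correction term collapses to $K\nabla_{i}H$; expressing $K=\tfrac{1}{4}H^{2}-\tfrac{1}{2}\norm{A^{o}}^{2}$ via the Gauss equation then produces (\ref{TracefreeLemma1,2}) in the stated schematic form. The $\nabla H\star HA^{o}$ slot may be taken with coefficient zero, which is admissible under $\star$-notation as a linear combination of contractions.

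Finally, (\ref{TracefreeLemma1,3}) follows from (\ref{TracefreeLemma1,2}) by contracting against $-\nabla H$, giving
\[
-\inner{\Delta\nabla H,\nabla H}=-\inner{\nabla\Delta H,\nabla H}-\tfrac{1}{4}H^{2}\norm{\nabla H}^{2}+\tfrac{1}{2}\norm{A^{o}}^{2}\norm{\nabla H}^{2},
\]
together with any $\nabla H\star HA^{o}\star\nabla H$ slack from the $\star$-bucket, which Young's inequality bounds by $\tfrac{1}{8}H^{2}\norm{\nabla H}^{2}+c\norm{A^{o}}^{2}\norm{\nabla H}^{2}$; absorbing this into the $-\tfrac{1}{4}$ coefficient yields the stated $-\tfrac{1}{8}$. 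The only real care required is sign bookkeeping and the trace cancellation in Step~1; no substantive obstacle arises.
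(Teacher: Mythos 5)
Your proof is correct, and it is tighter than the paper's in an interesting way.

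The main difference is in (\ref{TracefreeLemma1,2}). The paper commutes covariant derivatives directly on the $(0,3)$-tensor $\nabla_{ijk}H$, introduces the Riemann tensor, writes it via Gauss as $A\star A$, and then decomposes $A=A^{o}+\tfrac12 Hg$ term by term, collecting the remainder into a $\nabla H\star(HA^{o}+A^{o}\star A^{o})$ bucket. You instead invoke the scalar Ricci commutation formula $\Delta\nabla_{i}f-\nabla_{i}\Delta f=\mathrm{Ric}_{i}{}^{j}\nabla_{j}f$ with $f=H$, use $\mathrm{Ric}=Kg$ in dimension two, and substitute the Gauss equation $K=\tfrac14 H^{2}-\tfrac12\norm{A^{o}}^{2}$. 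This is not only shorter; it produces the \emph{exact} identity
\[
\Delta\nabla H=\nabla\Delta H+\tfrac14 H^{2}\nabla H-\tfrac12\norm{A^{o}}^{2}\nabla H,
\]
which shows the $HA^{o}$ slot in the paper's $\star$-bucket is actually zero. (One can check this against the paper's own computation: the $H(A^{o})_{k}^{t}\nabla_{t}H$ contributions from $HA_{k}^{t}\nabla_{t}H$ and $-A_{i}^{t}A_{k}^{i}\nabla_{t}H$ cancel, and $(A^{o})^{2}=\tfrac12\norm{A^{o}}^{2}\,\mathrm{id}$ by Cayley--Hamilton for a tracefree $2\times2$ matrix.) Consequently, (\ref{TracefreeLemma1,3}) follows for you by plain contraction with the sharper constant $-\tfrac14 H^{2}\norm{\nabla H}^{2}$; the Peter--Paul/Young step and the weaker $-\tfrac18$ constant are only needed if one does not resolve the $HA^{o}$ ambiguity, so your fallback discussion is sound but superfluous. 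Finally, for (\ref{TracefreeLemma1,1}) you give a full derivation from Simons' identity plus Cayley--Hamilton where the paper simply cites \cite{Helfrich1}; your computation (the pure-trace $HKg_{ij}$ terms cancel, $2K=\tfrac12 H^{2}-\norm{A^{o}}^{2}$) is correct.
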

\begin{proof}
The first identity is identical to one in the Appendix of \cite{Helfrich1}. For the second identity, we employ the Riemann curvature tensor:
\begin{equation*}
\nabla_{ijk}H = \nabla_{ikj}H=\nabla_{kij}H+R_{ikjs}g^{st}\nabla_{t}H
 = \nabla_{kij}H+A_{k}^{t}A_{ij}\nabla_{t}H-A_{i}^{t}A_{kj}\nabla_{t}H.
\end{equation*}
This immediately gives
\begin{equation}
\Delta\nabla_{k}H=\nabla_{k}\Delta H+HA_{k}^{t}\nabla_{t}H-A_{i}^{t}A_{k}^{i}\nabla_{t}H.\label{TracefreeLemma1,5}
\end{equation}
Evaluating the second term on the right, we have
\[
HA_{k}^{t}\nabla_{t}H=H\oo{\oo{A^{o}}_{k}^{t}+\frac{1}{2}\delta_{k}^{t}H}\nabla_{t}H=\frac{1}{2}\nabla_{k}H\norm{H}^{2}+H\nabla H\star A^{o}.
\]
The last term in $\oo{\ref{TracefreeLemma1,5}}$ is dealt with similarly:
\begin{equation*}
A_{i}^{t}A_{k}^{i}\nabla_{t}H=\oo{\oo{A^{o}}_{i}^{t}+\frac{1}{2}\delta_{i}^{t}H}\oo{\oo{A^{o}}_{k}^{i}+\frac{1}{2}\delta_{k}^{i}H}\nabla_{t}H
 =\frac{1}{4}\nabla_{k}H\norm{H}^{2}+\nabla H\star\oo{HA^{o}+A^{o}\star A^{o}}.
\end{equation*}
Substituting the two preceding results into $\oo{\ref{TracefreeLemma1,5}}$ then
yields the second statement of the lemma. For the last statement, we take the
results of $\oo{\ref{TracefreeLemma1,5}}$ and apply the Peter-Paul inequality
with $p=8,q=\frac{8}{7}$:
\begin{align*}
-\inner{\Delta\nabla H,\nabla H}&\leq-\inner{\nabla\Delta H,\nabla H}-\frac{1}{4}\norm{\nabla H}^{2}H^{2}+c\norm{\nabla H}^{2}\oo{\norm{A^{o}}\norm{H}+\norm{A^{o}}^{2}}\\
&\leq-\inner{\nabla\Delta H,\nabla H}-\frac{1}{8}\norm{\nabla H}^{2}H^{2}+c\norm{\nabla H}^{2}\norm{A^{o}}^{2}.
\end{align*}
\end{proof}

We now prove a multiplicative Sobolev inequality.

\begin{lemma}\label{Tracefreelemma2}
If $f:\Sigma\rightarrow\mathbb{R}^{3}$ is an immersion satisfying 
\begin{equation}
\int_{\cc{\gamma>0}}{\norm{A^{o}}^{2}\,d\mu}\leq\varepsilon_{0}\label{Tracefreesmallness}
\end{equation}
for $\varepsilon_{0}>0$ sufficiently small, then for a universal constant $c>0$,
\begin{equation}
\intM{\norm{\nabla A^{o}}^{2}\gamma^{2}}+\intM{\norm{A^{o}}^{2}H^{2}\gamma^{2}}\\
\leq c\Ao{\frac{4}{3}}\oo{\intM{\norm{\nabla\Delta H}^{2}\gamma^{6} }}^{\frac{1}{3}}+c\,\cgam^{2}\Ao{2},
\label{Tracefreelemma2,1}
\end{equation}
\begin{equation}
\intM{\nablanorm{2}{H}{2}\gamma^{4}}+\intM{\norm{\nabla H}^{2}H^{2}\gamma^{4}}\\
 \leq c\Ao{\frac{2}{3}}\oo{\intM{\norm{\nabla\Delta H}^{2}\gamma^{6} }}^{\frac{2}{3}}+c\, \cgam^{4}\Ao{2},\label{Tracefreelemma2,2}
\end{equation}
and
\begin{equation}
\intM{\norm{A^{o}}^{4}\gamma^{2}}
 \leq c\Ao{\frac{10}{3}}\oo{\intM{\norm{\nabla\Delta H}^{2}\gamma^{6} }}^{\frac{1}{3}}+c\, \cgam^{2}\Ao{4}.\label{Tracefreelemma2,3}
\end{equation}
\end{lemma}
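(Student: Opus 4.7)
My plan is to prove \eqref{Tracefreelemma2,1}, \eqref{Tracefreelemma2,2} and \eqref{Tracefreelemma2,3} as a tightly coupled system using three building blocks: Simons' identity \eqref{TracefreeLemma1,1}, the gradient identity \eqref{TracefreeLemma1,3}, and the Michael--Simon Sobolev inequality (Theorem \ref{MichaelSimon}). The smallness hypothesis $\Ao{2} \leq \varepsilon_{0}$ will close a bootstrap absorption argument. Writing $L_1$, $L_2$, $L_3$ for the left-hand sides of \eqref{Tracefreelemma2,1}, \eqref{Tracefreelemma2,2}, \eqref{Tracefreelemma2,3} respectively, and setting $E := \Ao{2}$ and $D := \int |\nabla\Delta H|^2 \gamma^{6}\, d\mu$, the three target bounds are scaling-consistent in $E$ and $D$.

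For \eqref{Tracefreelemma2,3} I apply Michael--Simon Sobolev to $u = |A^o|^{2} \gamma$; expanding $\nabla u$ and using Cauchy--Schwarz gives $L_3 \leq c\, E\,(L_1 + \cgam^{2} E)$. For \eqref{Tracefreelemma2,1} I start from $\int |\nabla A^o|^{2} \gamma^{2}\, d\mu = -\int \inner{\Delta A^o, A^o} \gamma^{2}\, d\mu + \text{BT}$, absorb the boundary terms via Peter--Paul into $\tfrac{1}{2} L_1 + c\cgam^{2} E$, and then substitute Simons' identity. Since the $g_{ij}\Delta H$ component of $S^o(\nabla^{2} H)$ annihilates against the tracefree $A^o$, Cauchy--Schwarz on the remaining term yields
\begin{equation*}
\tfrac{1}{2} L_1 \leq E^{1/2}\, L_{2}^{1/2} + L_3 + c\cgam^{2} E.
\end{equation*}
For \eqref{Tracefreelemma2,2} I multiply \eqref{TracefreeLemma1,3} by $\gamma^{4}$, integrate, and integrate by parts on the left to produce $\int |\nabla^{2} H|^{2} \gamma^{4}\, d\mu$ together with a boundary error dominated by $c\cgam^{2} L_1$. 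Cauchy--Schwarz on the $\nabla\Delta H$ term, using $|\nabla H|^{2} \leq 4|\nabla A^o|^{2} \leq 4 L_1$, then gives
\begin{equation*}
\tfrac{1}{2} L_2 \leq c L_{1}^{1/2} D^{1/2} + c\int |\nabla H|^{2} |A^o|^{2} \gamma^{4}\, d\mu + c\cgam^{2} L_1.
\end{equation*}

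The main obstacle is the cross-term $\int |\nabla H|^{2} |A^o|^{2}\gamma^{4}\, d\mu$ in the derivation of $L_2$, together with the resulting circular dependence among $L_1$, $L_2$, $L_3$. I plan to treat the cross-term by a second application of Michael--Simon Sobolev to $u = |\nabla H|\, |A^o|\, \gamma^{2}$, in the spirit of Lemma \ref{EvolutionLemma4}; this produces a bound of the form $c\,\varepsilon_{0}(L_1 + L_2) + c\cgam^{2}\,\varepsilon_{0}\, E$, whose first two terms are absorbed into the respective left-hand sides once $\varepsilon_{0}$ is chosen sufficiently small. With the coupling broken, substituting the resulting $L_2$ estimate into the $L_1$ inequality yields $L_1 \leq c E^{2/3} D^{1/3} + c\, E\, L_1 + c\cgam^{2} E$, so \eqref{Tracefreelemma2,1} follows by absorption for $\varepsilon_{0}$ small; substituting back then delivers \eqref{Tracefreelemma2,2}, and \eqref{Tracefreelemma2,3} follows from the Michael--Simon estimate already derived.
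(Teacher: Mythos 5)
Your proposal follows essentially the same route as the paper's proof: the same three building blocks (Simons' identity \eqref{TracefreeLemma1,1}, the gradient identity \eqref{TracefreeLemma1,3}, and the Michael--Simon Sobolev inequality, the last applied both to $u=\norm{A^o}^2\gamma$ and $u=\norm{\nabla H}\norm{A^o}\gamma^2$), the same integration-by-parts structure, the same handling of the cross-term $\int\norm{\nabla H}^2\norm{A^o}^2\gamma^4\,d\mu$, and the same absorption scheme using smallness of $\varepsilon_0$ to break the circular dependence among $L_1$, $L_2$, $L_3$. One small imprecision: the Michael--Simon step on $u=\norm{\nabla H}\norm{A^o}\gamma^2$ produces a term $c\big(\int\norm{\nabla A^o}^2\gamma^2\,d\mu\big)^2$ (i.e.\ $c L_1^2$, as in the paper's intermediate estimate \eqref{Tracefreelemma2,5}) rather than the $c\varepsilon_0 L_1$ you wrote; since this contributes only $cE^{1/2}L_1$ after taking the square root in the substitution into the $L_1$ inequality, it is still absorbed for $\varepsilon_0$ small and your argument closes.
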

\begin{proof}
For the first inequality we combine integration by parts, identity $\oo{\ref{TracefreeLemma1,1}}$ and Theorem \ref{MichaelSimon} with $u=\norm{A^{o}}^{2}\gamma$:
\begin{align*}
\intM{\norm{\nabla A^{o}}^{2}\gamma^{2}}
&\leq-\intM{\inner{A^{o},\Delta A^{o}}\gamma^{2}}+2\, \cgam\intM{\norm{\nabla A^{o}}\norm{A^{o}}\gamma}\\
&\leq\llll{A^{o}}_{2,\cc{\gamma>0}}\oo{\intM{\nablanorm{2}{H}{2}\gamma^{4} }}^{\frac{1}{2}}-\frac{1}{2}\intM{\norm{A^{o}}^{2}H^{2}\gamma^{2}}\\
&\quad+\cmss\oo{2\intM{\norm{\nabla A^{o}}\norm{A^{o}}\gamma}+\cgam\Ao{2}+\intM{\norm{A^{o}}^{2}\norm{H}\gamma}}^{2}\\
&\quad+\frac{1}{2}\intM{\norm{\nabla A^{o}}^{2}\gamma^{2}}+2\, \cgam^{2}\Ao{2}\\
&\leq\llll{A^{o}}_{2,\cc{\gamma>0}}\oo{\intM{\nablanorm{2}{H}{2}\gamma^{4} }}^{\frac{1}{2}}+\oo{c\Ao{2}-\frac{1}{2}}\intM{\norm{A^{o}}^{2}H^{2}\gamma^{2}}\\
&\quad+\oo{c\Ao{2}+\frac{1}{2}}\intM{\norm{\nabla A^{o}}^{2}\gamma^{2}}+c \, \cgam^{2}\Ao{2}.
\end{align*}
Then for $\varepsilon_{0}>0$ sufficiently small we can absorb and multiply out, yielding
\begin{equation}
\intM{\norm{\nabla A^{o}}^{2}\gamma^{2}}+\intM{\norm{A^{o}}^{2}H^{2}\gamma^{2}}\\
 \leq 2\llll{A^{o}}_{2,\cc{\gamma>0}}\oo{\intM{\nablanorm{2}{H}{2}\gamma^{4} }}^{\frac{1}{2}}+c \, \cgam^{2}\Ao{2}.\label{Tracefreelemma2,4}
\end{equation}
We will leave this identity for the moment. For the second inequality we
combine integration by parts, identity $\oo{\ref{Tracefreelemma2,1}}$, Lemma
\ref{TracefreeLemma1} and Theorem \ref{MichaelSimon} with $u=\norm{\nabla
H}\norm{A^{o}}\gamma^{2}$ to obtain:
\begin{align*}
\intM{\nablanorm{2}{H}{2}\gamma^{4}}
&\leq-\intM{\inner{\nabla H,\Delta\nabla H}\gamma^{4}}+4\, \cgam\intM{\norm{\nabla_{\oo{2}}H}\norm{\nabla H}\gamma^{3}}\\
&\leq2\oo{\intM{\norm{\nabla A^{o}}^{2}\gamma^{2}}\cdot\intM{\norm{\nabla\Delta H}^{2}\gamma^{6} }}^{\frac{1}{2}}-\frac{1}{8}\intM{\norm{\nabla H}^{2}H^{2}\gamma^{4}}\\
&\quad+c\intM{\norm{\nabla H}^{2}\norm{A^{o}}^{2}\gamma^{4}}+\frac{1}{2}\intM{\nablanorm{2}{H}{2}\gamma^{4}}+32\, \cgam^{2}\intM{\norm{\nabla A^{o}}^{2}\gamma^{2}}\\
&\leq2\oo{\intM{\norm{\nabla A^{o}}^{2}\gamma^{2}}\cdot\intM{\norm{\nabla\Delta H}^{2}\gamma^{6} }}^{\frac{1}{2}}-\frac{1}{8}\intM{\norm{\nabla H}^{2}H^{2}\gamma^{4}}\\
&\quad+c\bigg(\intM{\norm{\nabla_{\oo{2}}H}\norm{A^{o}}\gamma^{2}}+\intM{\norm{\nabla H}\norm{\nabla A^{o}}\gamma^{2}}+\cgam\intM{\norm{\nabla H}\norm{A^{o}}\gamma}\\
&\qquad+\intM{\norm{\nabla H}\norm{A^{o}}\norm{H}\gamma^{2}}\bigg)^{2}+\frac{1}{2}\intM{\nablanorm{2}{H}{2}\gamma^{4}}+32\, \cgam^{2}\intM{\norm{\nabla A^{o}}^{2}\gamma^{2}}\\
&\leq2\oo{\intM{\norm{\nabla A^{o}}^{2}\gamma^{2}}\cdot\intM{\norm{\nabla\Delta H}^{2}\gamma^{6} }}^{\frac{1}{2}}+c \, \cgam^{2}\intM{\norm{\nabla A^{o}}^{2}\gamma^{2}}\\
&\quad+\oo{c\Ao{2}+\frac{1}{2}}\intM{\nablanorm{2}{H}{2}\gamma^{4}}+c\oo{\intM{\norm{\nabla A^{o}}^{2}\gamma^{2} }}^{2}\\
&\quad+\oo{c\Ao{2}-\frac{1}{8}}\intM{\norm{\nabla H}^{2}H^{2}\gamma^{4}}.
\end{align*}
Absorbing and multiplying out, whilst combining with $\oo{\ref{Tracefreelemma2,4}}$ gives for $\varepsilon_{0}$ sufficiently small:
\begin{align}
\intM{\nablanorm{2}{H}{2}\gamma^{4}}+\intM{\norm{\nabla H}^{2}H^{2}\gamma^{4}}
&\leq c\oo{\intM{\norm{\nabla A^{o}}^{2}\gamma^{2}}\cdot\intM{\norm{\nabla\Delta H}^{2}\gamma^{6} }}^{\frac{1}{2}}
\nonumber\\
&\qquad+ c \, \cgam^{2}\intM{\norm{\nabla A^{o}}^{2}\gamma^{2}}
       + c\oo{\intM{\norm{\nabla A^{o}}^{2}\gamma^{2} }}^{2}\,.\label{Tracefreelemma2,5}
\end{align}
Combining $\oo{\ref{Tracefreelemma2,4}},\oo{\ref{Tracefreelemma2,5}}$ and using the Cauchy-Schwarz inequality then yields
\begin{align*}
\intM{\norm{\nabla A^{o}}^{2}\gamma^{2}}+\intM{\norm{A^{o}}^{2}H^{2}\gamma^{2}}
&\leq\oo{c\norm{A^{o}}_{2,\cc{\gamma>0}}+\frac{1}{2}}\intM{\norm{\nabla A^{o}}^{2}\gamma^{2}}
\\&\qquad
+ c\Ao{\frac{4}{3}}\oo{\intM{\norm{\nabla\Delta H}^{2}\gamma^{6} }}^{\frac{1}{3}}
+ c\, \cgam^{2}\Ao{2}.
\end{align*}
For $\varepsilon_{0}$ sufficiently small we may then absorb and multiply out to
give $\oo{\ref{Tracefreelemma2,1}}$.
Using $\oo{\ref{Tracefreelemma2,1}},\oo{\ref{Tracefreelemma2,5}}$ and the
Cauchy-Schwarz inequality yields $\oo{\ref{Tracefreelemma2,2}}$.
Finally for the third statement of the lemma we combine
$\oo{\ref{Tracefreelemma2,1}}$ and Theorem \ref{MichaelSimon} with
$u=\norm{A^{o}}^{2}\gamma$ to obtain:
\begin{align*}
\intM{\norm{A^{o}}^{4}\gamma^{2}}
&\leq\cmss\oo{2\intM{\norm{\nabla A^{o}}\norm{A^{o}}\gamma}+\cgam\Ao{2}+\intM{\norm{A^{o}}^{2}\norm{H}\gamma}}^{2}\\
&\leq c\Ao{2}\oo{\intM{\norm{\nabla A^{o}}^{2}\gamma^{2}}+\intM{\norm{A^{o}}^{2}H^{2}\gamma^{2} }}+c\, \cgam^{2}\Ao{4}\\
&\leq c\Ao{\frac{10}{3}}\oo{\intM{\norm{\nabla\Delta H}^{2}\gamma^{6} }}^{\frac{1}{3}}+c\, \cgam^{2}\Ao{4}.
\end{align*}
This completes the proof.
\end{proof}

The following four lemmata are an adaptation of methods from standard energy estimates for elliptic PDE to the manifold setting.
Difficulties arise due to the geometry.
Since we are working in terms of curvature norms, such problems can be absorbed by good terms in the estimates.

\begin{lemma}\label{Tracefreelemma3}
If $f:\Sigma\rightarrow\mathbb{R}^{3}$ is an immersion satisfying $\oo{\ref{Tracefreesmallness}}$ for $\varepsilon_{0}>0$ sufficiently small, then there is a universal constant $c>0$ such that
\begin{multline}
\intM{\nablanorm{2}{A^{o}}{2}\gamma^{4}}+\intM{\norm{\nabla A^{o}}^{2}H^{2}\gamma^{4}}\\
\leq c\intM{\norm{\Delta A^{o}}^{2}\gamma^{4}}+c\Ao{\frac{2}{3}}\oo{\intM{\norm{\nabla\Delta H}^{2}\gamma^{6} }}^{\frac{2}{3}}+c \, \cgam^{4}\Ao{2} \mbox{.}\label{Tracefreelemma3,1}
\end{multline}
\end{lemma}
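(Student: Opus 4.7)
The plan is a Bochner-type elliptic estimate: integrate $\intM{\nablanorm{2}{A^{o}}{2}\gamma^{4}}$ by parts twice in order to trade two covariant derivatives for the Laplacian $\Delta A^{o}$, and extract from the commutators the required good term $\intM{\norm{\nabla A^{o}}^{2}H^{2}\gamma^{4}}$. The $L^{2}$-smallness hypothesis $\Ao{2}\leq\varepsilon_{0}$ is used throughout to absorb the curvature-quartic residues produced by the commutators.

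First, beginning with $\intM{\nabla_{i}\nabla_{j}A^{o}_{kl}\,\nabla^{i}\nabla^{j}A^{o,kl}\gamma^{4}}$, I would integrate by parts in $\nabla^{i}$, picking up $-\intM{\nabla_{j}A^{o}\star\Delta\nabla^{j}A^{o}\gamma^{4}}$ plus a cutoff remainder of the form $\int\nabla A^{o}\star\nabla_{(2)}A^{o}\star\nabla\gamma\,\gamma^{3}$, absorbed via Young's inequality into $\tfrac14\intM{\nablanorm{2}{A^{o}}{2}\gamma^{4}}+c\,\cgam^{2}\intM{\norm{\nabla A^{o}}^{2}\gamma^{2}}$. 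Next I would invoke the Ricci identity
\[
\Delta\nabla A^{o}=\nabla\Delta A^{o}+R\star\nabla A^{o}+\nabla R\star A^{o},
\]
and use the two-dimensional Gauss equation $R_{ijkl}=K(g_{ik}g_{jl}-g_{il}g_{jk})$ with $K=\tfrac14 H^{2}-\tfrac12\norm{A^{o}}^{2}$. Contracting $R\star\nabla A^{o}$ against $\nabla A^{o}$ and using the tracefree identity $g^{ij}A^{o}_{ij}=0$ produces a term $c_{0}\intM{K\norm{\nabla A^{o}}^{2}\gamma^{4}}$ with strictly positive universal constant $c_{0}$. Splitting $K$ furnishes $\intM{\norm{\nabla A^{o}}^{2}H^{2}\gamma^{4}}$ on the left with a positive coefficient, modulo the residue $\intM{\norm{A^{o}}^{2}\norm{\nabla A^{o}}^{2}\gamma^{4}}$ on the right. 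A further integration by parts in $\nabla^{j}$ on $-\intM{\nabla_{j}A^{o}\star\nabla^{j}\Delta A^{o}\gamma^{4}}$ yields the desired $\intM{\norm{\Delta A^{o}}^{2}\gamma^{4}}$.

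The residual junk must then be absorbed. The $\nabla R\star A^{o}$ commutator, schematically $A\star\nabla A\star A^{o}\star\nabla A^{o}$, splits via $\norm{\nabla H}\leq 2\norm{\nabla A^{o}}$ (Codazzi, see \eqref{EQabove10}) and Young's inequality into $\varepsilon\intM{H^{2}\norm{\nabla A^{o}}^{2}\gamma^{4}}+c_{\varepsilon}\intM{\norm{A^{o}}^{2}\norm{\nabla A^{o}}^{2}\gamma^{4}}$, and the first piece is absorbed for $\varepsilon$ small. The remaining quartic $\intM{\norm{A^{o}}^{2}\norm{\nabla A^{o}}^{2}\gamma^{4}}$ is treated by Theorem \ref{MichaelSimon} applied to $u=\norm{\nabla A^{o}}\norm{A^{o}}\gamma^{2}$; expansion of the resulting square, combined with $\Ao{2}\leq\varepsilon_{0}$, bounds it by $c\,\varepsilon_{0}\bigl(\intM{\nablanorm{2}{A^{o}}{2}\gamma^{4}}+\intM{\norm{\nabla A^{o}}^{2}H^{2}\gamma^{4}}+\cgam^{4}\Ao{2}\bigr)$, which is absorbed into the left for $\varepsilon_{0}$ small. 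The surviving cutoff term $c\,\cgam^{2}\intM{\norm{\nabla A^{o}}^{2}\gamma^{2}}$ is estimated via \eqref{Tracefreelemma2,1} of Lemma \ref{Tracefreelemma2}, and a subsequent application of Young's inequality with exponents $(2,2)$ combined with $\Ao{4/3}\leq\varepsilon_{0}^{1/3}\Ao{2/3}$ converts this into the targeted $c\Ao{2/3}\bigl(\intM{\norm{\nabla\Delta H}^{2}\gamma^{6}}\bigr)^{2/3}+c\cgam^{4}\Ao{2}$ form.

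The main obstacle is verifying the positivity of the Ricci commutator contribution in the second step: one must check by direct contraction that the $R\star\nabla A^{o}$ commutator, summed over all index placements and reduced by the tracefree identity $g^{ij}A^{o}_{ij}=0$, yields a strictly positive coefficient for $\intM{K\norm{\nabla A^{o}}^{2}\gamma^{4}}$. This mirrors the computation for $H$ in \eqref{TracefreeLemma1,3}, where the analogous coefficient $\tfrac18$ was obtained after a Peter--Paul split, and that scalar case serves as the template. A secondary subtlety is that the $H\nabla H\star A^{o}\star\nabla A^{o}$ cross contribution from $\nabla R\star A^{o}$ must not swallow the good $H^{2}\norm{\nabla A^{o}}^{2}$ term, which is precisely what the Peter--Paul splitting above is designed to ensure.
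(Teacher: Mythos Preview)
Your overall architecture matches the paper's: integrate by parts to reach $-\int\langle\nabla A^o,\Delta\nabla A^o\rangle\gamma^4$, commute to $\nabla\Delta A^o$, integrate by parts again to produce $\int|\Delta A^o|^2\gamma^4$, and clean up with Michael--Simon applied to $u=|\nabla A^o||A^o|\gamma^2$ together with Lemma~\ref{Tracefreelemma2}. The gap is in your second step.

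The commutator $\langle\nabla A^o,(\Delta\nabla-\nabla\Delta)A^o\rangle$ does \emph{not} reduce to $c_0K|\nabla A^o|^2$. Because $A^o$ carries two indices and $\nabla A^o$ a third, the curvature acts on each slot separately and the resulting contractions are not all proportional to $|\nabla A^o|^2$. The paper computes this explicitly and finds the $H^2$ contribution to be
\[
\tfrac{5}{4}\,|\nabla A^o|^2H^2\;-\;\tfrac{1}{2}\,|\nabla H|^2H^2,
\]
the second piece arising from contractions such as $g_{kl}\nabla_i(A^o)^i_{\ m}\,\nabla^k(A^o)^{lm}=\tfrac14|\nabla H|^2$ and $\nabla_lA^o_{km}\nabla^k(A^o)^{lm}=|\nabla A^o|^2-\tfrac14|\nabla H|^2$. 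If you now invoke $|\nabla H|^2\le 4|\nabla A^o|^2$, the net coefficient of $|\nabla A^o|^2H^2$ becomes $\tfrac{5}{4}-2<0$, and the good term is lost. So the positivity you flag as ``the main obstacle'' in fact fails under your schematic reduction.

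The paper's remedy is to keep $\int|\nabla H|^2H^2\gamma^4$ as a separate term on the right-hand side and bound it via \eqref{Tracefreelemma2,2} of Lemma~\ref{Tracefreelemma2}, which already controls it by $c\Ao{2/3}\bigl(\int|\nabla\Delta H|^2\gamma^6\bigr)^{2/3}+c\,\cgam^4\Ao{2}$. You invoke Lemma~\ref{Tracefreelemma2} only for the cutoff remainder $\cgam^2\int|\nabla A^o|^2\gamma^2$; you need it here as well, and this is the step that rescues the sign.
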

\begin{proof}
Using interchange of covariant derivatives:
\begin{align}
\Delta\nabla_{k}A_{lm}^{o}&=
g^{ij}\nabla_{ikj}A_{lm}^{o}+g^{ij}g^{st}\nabla_{i}\oo{R_{jkls}A_{tm}^{o}+R_{jkms}A_{lt}^{o}}\nonumber\\
&=g^{ij}\nabla_{ikj}A_{lm}^{o}+\frac{1}{4}\oo{\nabla_{l}A_{km}^{o}+\nabla_{m}A_{kl}^{o}}H^{2}\nonumber\\
&\quad -\frac{1}{4}\oo{g_{kl}\nabla_{i}\oo{A^{o}}_{m}^{i}+g_{km}\nabla_{i}\oo{A^{o}}_{l}^{i}}H^{2}+\nabla\oo{HA^{o}\star A^{o}+A^{o}\star A^{o}\star A^{o}}.\label{Tracefreelemma3,2}
\end{align}
Here $R_{ijkl}$ is the Riemann curvature tensor.
We also used \eqref{EQabove10} and
\[
H\nabla H\star A^{o}=H\nabla A^{o}\star A^{o}=\nabla\oo{HA^{o}\star A^{o}}.
\]
Performing a similar operation, we have
\begin{eqnarray}
g^{ij}\nabla_{ikj}A_{lm}^{o}
&=&\nabla_{k}\Delta A_{lm}^{o}+\frac{1}{4}\oo{\nabla_{k}A_{lm}^{o}+\nabla_{l}A_{km}^{o}+\nabla_{m}A_{kl}^{o}}H^{2}\nonumber\\
&&-\frac{1}{4}\oo{g_{kl}\nabla_{i}\oo{A^{o}}_{m}^{i}+g_{km}\nabla_{i}\oo{A^{o}}_{l}^{i}}H^{2}+\nabla\oo{HA^{o}\star A^{o}+A^{o}\star A^{o}\star A^{o}}.\label{Tracefreelemma3,3}
\end{eqnarray}
Combining $\oo{\ref{Tracefreelemma3,2}}$ and $\oo{\ref{Tracefreelemma3,3}}$ then gives us
\begin{multline}
\Delta \nabla_{k}A_{lm}^{o} = \nabla_{k}\Delta A_{lm}^{o}+\frac{1}{4}\oo{\nabla_{k}A_{lm}^{o}+2\oo{\nabla_{l}A_{km}^{o}+\nabla_{m}A_{kl}^{o} }}H^{2}\\
-\frac{1}{2}\oo{g_{kl}\nabla_{i}\oo{A^{o}}_{m}^{i}+g_{km}\nabla_{i}\oo{A^{o}}_{l}^{i}}H^{2}+\nabla\oo{HA^{o}\star A^{o}+A^{o}\star A^{o}\star A^{o}}.\label{Tracefreelemma3,4}
\end{multline}
Next tracing the term $\nabla_{l}A_{km}^{o}$ with $\nabla^{k}\oo{A^{o}}^{lm}$ gives 
\begin{align*}
\nabla^{k}\oo{A^{o}}^{lm}\nabla_{l}A_{km}^{o} &=g^{ks}g^{lt}g^{mu}\oo{\nabla_{s}A_{tu}-\frac{1}{2}g_{tu}\nabla_{s}H}\oo{\nabla_{l}A_{km}-\frac{1}{2}g_{km}\nabla_{l}H}
=\norm{\nabla A}^{2}-\frac{3}{4}\norm{\nabla H}^{2}.
\end{align*}
Substituting these results back into $\oo{\ref{Tracefreelemma3,4}}$ and taking inner products with $\nabla A^{o}$ yields
\begin{align*}
\inner{\nabla A^{o},\Delta\nabla A^{o}}
&=\inner{\nabla A^{o},\nabla\Delta A^{o}}+\frac{1}{4}\norm{\nabla A^{o}}^{2}H^{2}+\oo{\norm{\nabla A}^{2}-\frac{3}{4}\norm{\nabla H}^{2}}H^{2}\\
 &\quad -\frac{1}{4}\nabla^{k}\oo{A^{o}}^{lm}\oo{g_{kl}\nabla_{m}H+g_{km}\nabla_{l}H}H^{2}+\nabla A^{o}\star\nabla\oo{HA^{o}\star A^{o}+A^{o}\star A^{o}\star A^{o}}\\
&=\inner{\nabla A^{o},\nabla\Delta A^{o}}+\frac{5}{4}\norm{\nabla A^{o}}^{2}H^{2}-\frac{1}{2}\norm{\nabla H}^{2}H^{2}+\nabla A^{o}\star\nabla\oo{HA^{o}\star A^{o}+A^{o}\star A^{o}\star A^{o}}.
\end{align*}
Taking the negative of the previous inequality, integrating, and applying the
Cauchy-Schwarz inquality gives 
\begin{align*}
\intM{\nablanorm{2}{A^{o}}{2}\gamma^{4}}
&\leq-\intM{\inner{\nabla A^{o},\Delta\nabla A^{o}}\gamma^{4}}+4\,\cgam\intM{\norm{\nabla A^{o}}\norm{\nabla_{\oo{2}}A^{o}}\gamma^{3}}\\
&\leq-\intM{\inner{\nabla A^{o},\nabla\Delta A^{o}}\gamma^{4}}-\frac{5}{4}\intM{\norm{\nabla A^{o}}^{2}H^{2}\gamma^{4}}+\frac{1}{2}\intM{\norm{\nabla H}^{2}H^{2}\gamma^{4}}\\
 &\quad+c\intM{\norm{\nabla A^{o}}\oo{\norm{\nabla A^{o}}\norm{A^{o}}^{2}+\norm{\nabla A^{o}}\norm{A^{o}}\norm{H}}\gamma^{4}}\\
&\quad+\frac{1}{2}\intM{\nablanorm{2}{A^{o}}{2}\gamma^{4}}+8\,\cgam^{2}\intM{\norm{\nabla A^{o}}^{2}\gamma^{2}}\\
&\leq c\intM{\norm{\Delta A^{o}}^{2}\gamma^{4}}+c \, \cgam^{2}\intM{\norm{\nabla A^{o}}^{2}\gamma^{2}}-\intM{\norm{\nabla A^{o}}^{2}H^{2}\gamma^{4}}\\
&\quad+\frac{1}{2}\intM{\norm{\nabla H}^{2}H^{2}\gamma^{4}}+c\intM{\norm{\nabla A^{o}}^{2}\norm{A^{o}}^{2}\gamma^{4}}+\frac{1}{2}\intM{\nablanorm{2}{A^{o}}{2}\gamma^{4}}.
\end{align*}
We then absorb, multiply by $2$, and apply Theorem \ref{MichaelSimon} with $u=\norm{\nabla A^{o}}\norm{A^{o}}\gamma^{2}$ to obtain:
\begin{align*}
\intM{\nablanorm{2}{A^{o}}{2}\gamma^{4}}
&\leq c\intM{\norm{\Delta A^{o}}^{2}\gamma^{4}}+c \, \cgam^{2}\intM{\norm{\nabla A^{o}}^{2}\gamma^{2}}-2\intM{\norm{\nabla A^{o}}^{2}H^{2}\gamma^{4}}
+\intM{\norm{\nabla H}^{2}H^{2}\gamma^{4}}\\
&\quad+c\left( \intM{\norm{\nabla_{\oo{2}}A^{o}}\norm{A^{o}}\gamma^{2}}+\intM{\norm{\nabla A^{o}}^{2}\gamma^{2}}+\cgam\intM{\norm{\nabla A^{o}}\norm{A^{o}}\gamma} \right. \\
&\qquad \quad \left. +\intM{\norm{\nabla A^{o}}\norm{A^{o}}\norm{H}\gamma^{2}}\right)^{2}\\
&\leq c\intM{\norm{\Delta A^{o}}^{2}\gamma^{4}}+c \, \cgam^{2}\intM{\norm{\nabla A^{o}}^{2}\gamma^{2}}+\frac{1}{2}\intM{\norm{\nabla H}^{2}H^{2}\gamma^{4}}+c\oo{\intM{\norm{\nabla A^{o}}^{2}\gamma^{2} }}^{2}\\
&\quad+\oo{c\Ao{2}-1}\intM{\norm{\nabla A^{o}}^{2}H^{2}\gamma^{4}}+c\Ao{2}\intM{\nablanorm{2}{A^{o}}{2}\gamma^{4}}.
\end{align*}
Absorbing and multiplying out, using Lemma $\ref{Tracefreelemma2}$ and the Cauchy-Schwarz inequality then gives 
\begin{align*}
&\intM{\nablanorm{2}{A^{o}}{2}\gamma^{4}}+\intM{\norm{\nabla A^{o}}^{2}H^{2}\gamma^{4}}\\
&\leq c\intM{\norm{\Delta A^{o}}^{2}\gamma^{4}}+c \, \cgam^{2}\intM{\norm{\nabla A^{o}}^{2}\gamma^{2}}+c\intM{\norm{\nabla H}^{2}H^{2}\gamma^{4}}+c\oo{\intM{\norm{\nabla A^{o}}^{2}\gamma^{2} }}^{2}\\
&\leq c\intM{\norm{\Delta A^{o}}^{2}\gamma^{4}}+c\Ao{\frac{2}{3}}\oo{\intM{\norm{\nabla\Delta H}^{2}\gamma^{6} }}^{\frac{2}{3}}+c \, \cgam^{4}\Ao{2},
\end{align*}
which is the statement of the lemma.
\end{proof}

\begin{lemma}\label{Tracefreelemma4}
If $f:\Sigma\rightarrow\mathbb{R}^{3}$ is an immersion satisfying $\oo{\ref{Tracefreesmallness}}$ for $\varepsilon_{0}>0$ sufficiently small then there is a universal constant $c>0$ such that
\begin{equation}
\intM{\norm{\Delta A^{o}}^{2}\gamma^{4}}+\intM{\norm{\nabla A^{o}}^{2}H^{2}\gamma^{4}}
\leq c\Ao{\frac{2}{3}}\oo{\intM{\norm{\nabla\Delta H}^{2}\gamma^{6} }}^{\frac{2}{3}}+c\, \cgam^{4}\Ao{2}.\label{Tracefreelemma4,1}
\end{equation}
\end{lemma}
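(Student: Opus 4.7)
The plan is to estimate $\intM{\norm{\Delta A^{o}}^{2}\gamma^{4}}$ directly by integrating by parts against the Simons-type identity, then recover the $\intM{\norm{\nabla A^{o}}^{2}H^{2}\gamma^{4}}$ contribution by combining with Lemma \ref{Tracefreelemma3}. I would begin by substituting identity \eqref{TracefreeLemma1,1} into one factor of $\inner{\Delta A^{o},\Delta A^{o}}$. Since $\Delta A^{o}$ is tracefree, the projection $S^{o}$ may be dropped, and
\begin{equation*}
\intM{\norm{\Delta A^{o}}^{2}\gamma^{4}}
 =\intM{\inner{\Delta A^{o},\nabla_{\oo{2}}H}\gamma^{4}}
  +\frac{1}{2}\intM{H^{2}\inner{\Delta A^{o},A^{o}}\gamma^{4}}
  -\intM{\norm{A^{o}}^{2}\inner{\Delta A^{o},A^{o}}\gamma^{4}}.
\end{equation*}

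For the principal term, I would integrate by parts once in a free index, apply the Codazzi identity \eqref{EQabove10}, commute covariant derivatives using the 2-dimensional Gauss relation, and then apply \eqref{TracefreeLemma1,2} to $\Delta\nabla H$. This extracts the main contribution $-\frac{1}{2}\intM{\inner{\nabla\Delta H,\nabla H}\gamma^{4}}$ alongside the non-positive $-\frac{1}{8}\intM{\norm{\nabla H}^{2}H^{2}\gamma^{4}}$ (discarded), together with curvature commutator and $\nabla\gamma$-boundary pieces. Cauchy--Schwarz, the bound $\norm{\nabla H}\leq2\norm{\nabla A^{o}}$, and \eqref{Tracefreelemma2,1} dominate the main contribution by
\[
c\Ao{2/3}\bigl(\intM{\norm{\nabla\Delta H}^{2}\gamma^{6}}\bigr)^{2/3}
 +c\cgam\Ao{1}\bigl(\intM{\norm{\nabla\Delta H}^{2}\gamma^{6}}\bigr)^{1/2},
\]
and Young's inequality with conjugate exponents $4/3$ and $4$ converts the second summand into $c\Ao{2/3}\bigl(\intM{\norm{\nabla\Delta H}^{2}\gamma^{6}}\bigr)^{2/3}+c\cgam^{4}\Ao{2}$. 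For the two remaining summands of the decomposition, integration by parts moves the Laplacian off $A^{o}$ and produces the sign-definite pieces $-\intM{H^{2}\norm{\nabla A^{o}}^{2}\gamma^{4}}$ and $-\intM{\norm{A^{o}}^{2}\norm{\nabla A^{o}}^{2}\gamma^{4}}$, into which the resulting cross terms may be absorbed via Peter--Paul; the $\nabla\gamma$-boundary contributions reduce to multiples of $\cgam^{2}\intM{H^{2}\norm{A^{o}}^{2}\gamma^{2}}$ and $\cgam^{2}\intM{\norm{A^{o}}^{4}\gamma^{2}}$, both controlled by Lemma \ref{Tracefreelemma2}.

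The main obstacle is bookkeeping the accumulated $\intM{(H^{2}+\norm{A^{o}}^{2})\norm{\nabla A^{o}}^{2}\gamma^{4}}$-type contributions produced by the Gauss commutators. Here one uses $\norm{\nabla H}\leq2\norm{\nabla A^{o}}$ to fold $\intM{\norm{\nabla H}^{2}\norm{A^{o}}^{2}\gamma^{4}}$ into $\intM{\norm{A^{o}}^{2}\norm{\nabla A^{o}}^{2}\gamma^{4}}$, and estimates the latter via Theorem \ref{MichaelSimon} applied to $u=\norm{A^{o}}\norm{\nabla A^{o}}\gamma^{2}$, combined with Lemma \ref{Tracefreelemma3} to control the resulting $\intM{\nablanorm{2}{A^{o}}{2}\gamma^{4}}$ factor; the smallness $\Ao{2}\leq\varepsilon_{0}$ then permits the small residual multiple of $\intM{\norm{\Delta A^{o}}^{2}\gamma^{4}}$ to be absorbed into the left-hand side. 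Once $\intM{\norm{\Delta A^{o}}^{2}\gamma^{4}}\leq c\Ao{2/3}\bigl(\intM{\norm{\nabla\Delta H}^{2}\gamma^{6}}\bigr)^{2/3}+c\cgam^{4}\Ao{2}$ is established, adding the bound from Lemma \ref{Tracefreelemma3} and dropping its non-negative $\intM{\nablanorm{2}{A^{o}}{2}\gamma^{4}}$ contribution yields \eqref{Tracefreelemma4,1}.
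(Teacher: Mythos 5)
Your overall decomposition agrees with the paper's: expand one factor of $\Delta A^o$ via the Simons identity \eqref{TracefreeLemma1,1}, integrate by parts, and then estimate. But your treatment of the principal term $\intM{\inner{\Delta A^o,\nabla_{\oo{2}}H}\gamma^4}$ takes a genuinely different and considerably more laborious route: the paper never commutes covariant derivatives here and never invokes \eqref{TracefreeLemma1,2}. It simply applies Cauchy--Schwarz to obtain $\bigl(\intM{\norm{\Delta A^o}^2\gamma^4}\cdot\intM{\nablanorm{2}{H}{2}\gamma^4}\bigr)^{1/2}$, absorbs the first factor by Peter--Paul, and invokes estimate \eqref{Tracefreelemma2,2} of Lemma \ref{Tracefreelemma2} (rather than \eqref{Tracefreelemma2,1}) to control $\intM{\nablanorm{2}{H}{2}\gamma^4}$, which completely sidesteps the Gauss-commutator bookkeeping.

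Two concrete issues in your sketch need attention. First, a sign error: integrating $-\intM{\norm{A^o}^2\inner{\Delta A^o,A^o}\gamma^4}$ by parts produces $+\intM{\norm{A^o}^2\norm{\nabla A^o}^2\gamma^4}$, not a negative sink, so you cannot Peter--Paul other pieces into it. (You do subsequently describe the correct mechanism --- Michael--Simon with $u=\norm{A^o}\norm{\nabla A^o}\gamma^2$ and Lemma \ref{Tracefreelemma3} --- so this is repairable, but the claimed sink does not exist.) Second, the commutator $[\nabla^j,\Delta]A^o_{ij}$ arising when you push the divergence through the Laplacian is of the form $K\star\nabla A^o$ with $K=\frac{1}{4}H^2-\frac{1}{2}\norm{A^o}^2$; paired against $\nabla H$ and using $\norm{\nabla H}\le 2\norm{\nabla A^o}$, the $H^2$ part produces $\intM{H^2\norm{\nabla A^o}^2\gamma^4}$ with a fixed universal contraction constant, and nothing you have written shows this constant is small enough to be swallowed by the good-sign $-\frac{1}{2}\intM{H^2\norm{\nabla A^o}^2\gamma^4}$. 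To close the argument you should instead split the commutator by Peter--Paul as $\delta\intM{H^2\norm{\nabla A^o}^2\gamma^4}+c_\delta\intM{H^2\norm{\nabla H}^2\gamma^4}$ and control the second integral directly via \eqref{Tracefreelemma2,2}. With these two repairs your route delivers the lemma; the paper's Cauchy--Schwarz shortcut reaches the same conclusion with much less machinery.
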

\begin{proof}
Integration by parts, estimate $\oo{\ref{TracefreeLemma1,1}}$, Theorem \ref{MichaelSimon}, and Simons' identity for $A^o$ together yield
\begin{align*}
\intM{\norm{\Delta A^{o}}^{2}\gamma^{4}}
&=\intM{\inner{\Delta A^{o},\nabla_{\oo{2}}H+\frac{1}{2}H^{2}A^{o}-\norm{A^{o}}^{2}A^{o}}\gamma^{4}}\\
&\leq\oo{\intM{\norm{\Delta A^{o}}^{2}\gamma^{4}}\cdot\intM{\nablanorm{2}{H}{2}\gamma^{4} }}^{\frac{1}{2}}-\frac{1}{4}\intM{\norm{\nabla A^{o}}^{2}H^{2}\gamma^{4}}\\
&\quad+c\intM{\norm{\nabla A^{o}}^{2}\norm{A^{o}}^{2}\gamma^{4}}+c \, \cgam^{2}\oo{\intM{\norm{A^{o}}^{2}H^{2}\gamma^{2}}+\intM{\norm{A^{o}}^{4}\gamma^{2} }}\\
&\leq\oo{\intM{\norm{\Delta A^{o}}^{2}\gamma^{4}}\cdot\intM{\nablanorm{2}{H}{2}\gamma^{4} }}^{\frac{1}{2}}-\frac{1}{4}\intM{\norm{\nabla A^{o}}^{2}H^{2}\gamma^{4}}\\
&\quad+c\Bigl(\intM{\norm{\nabla_{\oo{2}}A^{o}}\norm{A^{o}}\gamma^{2}}+\intM{\norm{\nabla A^{o}}^{2}\gamma^{2}}+\cgam\intM{\norm{\nabla A^{o}}\norm{A^{o}}\gamma}\\
&\qquad+\intM{\norm{\nabla A^{o}}\norm{A^{o}}\norm{H}\gamma^{2}}\Bigr)^{2}+c \, \cgam^{2}\oo{\intM{\norm{A^{o}}^{2}H^{2}\gamma^{2}}+\intM{\norm{A^{o}}^{4}\gamma^{2} }}\\
&\leq\frac{1}{2}\intM{\norm{\Delta A^{o}}^{2}\gamma^{4}}+\frac{1}{2}\intM{\nablanorm{2}{H}{2}\gamma^{4}}+c\oo{\intM{\norm{\nabla A^{o}}^{2}\gamma^{2} }}^{2}\\
&\quad+\oo{c\Ao{2}+\frac{1}{2}}\intM{\nablanorm{2}{A^{o}}{2}\gamma^{4}}+\oo{c\Ao{2}-\frac{1}{4}}\intM{\norm{\nabla A^{o}}^{2}H^{2}\gamma^{4}}\\
&\quad+c \, \cgam^{2}\oo{\intM{\norm{\nabla A^{o}}^{2}\gamma^{2}}+\intM{\norm{A^{o}}^{2}H^{2}\gamma^{2}}+\intM{\norm{A^{o}}^{4}\gamma^{2} }}.
\end{align*}
Once again, if $\varepsilon_{0}>0$ is sufficiently small, we can absorb and
multiply out, utilising the results of Lemmata $\ref{Tracefreelemma2}$ and
\ref{Tracefreelemma3} as well as the Cauchy-Schwarz inequality to derive:
\begin{align*}
&\intM{\norm{\Delta A^{o}}^{2}\gamma^{4}}+\intM{\norm{\nabla A^{o}}^{2}H^{2}\gamma^{4}}\\
&\leq c \, \cgam^{2}\oo{\intM{\norm{\nabla A^{o}}^{2}\gamma^{2}}+\intM{\norm{A^{o}}^{2}H^{2}\gamma^{2}}+\intM{\norm{A^{o}}^{4}\gamma^{2} }}+c\oo{\intM{\norm{\nabla A^{o}}^{2}\gamma^{2} }}^{2}
+c\intM{\nablanorm{2}{H}{2}\gamma^{4}}\\
&\leq c\Ao{\frac{2}{3}}\oo{\intM{\norm{\nabla\Delta H}^{2}\gamma^{6} }}^{\frac{2}{3}}+c \, \cgam^{4}\Ao{2}.
\end{align*}
This is the statement of the lemma.
\end{proof}

\begin{lemma}\label{Tracefreelemma5}
If $f:\Sigma\rightarrow\mathbb{R}^{3}$ is an immersion satisfying $\oo{\ref{Tracefreesmallness}}$ for $\varepsilon_{0}>0$ sufficiently small then there is a universal constant $c>0$ such that
\begin{equation*}
\intM{\nablanorm{2}{A^{o}}{2}\gamma^{4}}+\intM{\norm{\nabla A^{o}}^{2}H^{2}\gamma^{4}}
 \leq c\Ao{\frac{2}{3}}\oo{\intM{\norm{\nabla\Delta H}^{2}\gamma^{6} }}^{\frac{2}{3}}+c \, \cgam^{4}\Ao{2} \mbox{.}
\end{equation*}
\end{lemma}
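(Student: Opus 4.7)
The plan is to obtain this lemma as an immediate combination of the two preceding lemmata, Lemma \ref{Tracefreelemma3} and Lemma \ref{Tracefreelemma4}. Observe that the right-hand side of Lemma \ref{Tracefreelemma3} contains an unwanted term $c\intM{\norm{\Delta A^{o}}^{2}\gamma^{4}}$ which is not present in the target estimate, and that Lemma \ref{Tracefreelemma4} is precisely a bound for this term in terms of quantities that already appear on the target right-hand side.

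First I would write out Lemma \ref{Tracefreelemma3}, which provides the control
\[
\intM{\nablanorm{2}{A^{o}}{2}\gamma^{4}}+\intM{\norm{\nabla A^{o}}^{2}H^{2}\gamma^{4}}
\leq c\intM{\norm{\Delta A^{o}}^{2}\gamma^{4}}+c\Ao{\frac{2}{3}}\oo{\intM{\norm{\nabla\Delta H}^{2}\gamma^{6}}}^{\frac{2}{3}}+c\,\cgam^{4}\Ao{2}.
\]
Then I would substitute the estimate of Lemma \ref{Tracefreelemma4} directly into the first term on the right:
\[
c\intM{\norm{\Delta A^{o}}^{2}\gamma^{4}}
\leq c\Ao{\frac{2}{3}}\oo{\intM{\norm{\nabla\Delta H}^{2}\gamma^{6}}}^{\frac{2}{3}}+c\,\cgam^{4}\Ao{2},
\]
which holds under the same smallness assumption $\oo{\ref{Tracefreesmallness}}$. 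Combining the two bounds and renaming the resulting universal constant yields exactly the claimed estimate.

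There is essentially no obstacle here beyond bookkeeping: both ingredients are already in place, the smallness hypothesis on $\llll{A^{o}}_{2,\cc{\gamma>0}}$ is the same in each, and the only subtlety is that the term $\intM{\norm{\nabla A^{o}}^{2}H^{2}\gamma^{4}}$ appears on the left in both Lemmas \ref{Tracefreelemma3} and \ref{Tracefreelemma4} (so it simply survives the combination with a possibly adjusted constant). The lemma is therefore just a packaging step, replacing the $\norm{\Delta A^{o}}^{2}$-dependence in Lemma \ref{Tracefreelemma3} with intrinsic dependence on $\norm{\nabla\Delta H}^{2}$ and $\norm{A^{o}}^{2}$ only, which is the form that will be convenient in the subsequent pointwise bound on $A^{o}$ and, later, in the proof of the Gap Lemma.
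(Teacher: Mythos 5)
Your proposal is correct and is exactly the argument the paper uses: Lemma \ref{Tracefreelemma4} is substituted for the term $c\intM{\norm{\Delta A^{o}}^{2}\gamma^{4}}$ appearing on the right-hand side of Lemma \ref{Tracefreelemma3}, and the remaining terms are already of the desired form. The paper simply states ``Combining the results of Lemmata \ref{Tracefreelemma3} and \ref{Tracefreelemma4} instantly gives us the result,'' which is the same bookkeeping step you spelled out.
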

\begin{proof}
Combining the results of Lemmata $\ref{Tracefreelemma3}$ and $\ref{Tracefreelemma4}$ instantly gives us the result.
\end{proof}
\begin{lemma}\label{Tracefreelemma6}
If $f:\Sigma\rightarrow\mathbb{R}^{3}$ is an immersion satisfying $\oo{\ref{Tracefreesmallness}}$ for $\varepsilon_{0}>0$ sufficiently small then there is a universal constant $c>0$ such that
\begin{equation}
\intM{\norm{\nabla A^{o}}^{2}H^{2}\gamma^{4}}+\intM{\norm{A^{o}}^{2}H^{4}\gamma^{4}}\\
 \leq c\Ao{\frac{2}{3}}\oo{\intM{\norm{\nabla\Delta H}^{2}\gamma^{6} }}^{\frac{2}{3}}+c \, \cgam^{4}\Ao{2} \mbox{.}\label{Tracefreelemma6,1}
\end{equation}
\end{lemma}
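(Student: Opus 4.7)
The first term on the left-hand side, $\intM{\norm{\nabla A^o}^2 H^2 \gamma^4}$, is already contained in the conclusion of Lemma \ref{Tracefreelemma4}, so it suffices to estimate $\intM{\norm{A^o}^2 H^4 \gamma^4}$. For this I would rearrange Simons' identity \eqref{TracefreeLemma1,1} as
\[
\tfrac{1}{2}\, H^2 A^o = \Delta A^o - S^o\oo{\nabla_{\oo{2}} H} + \norm{A^o}^2 A^o,
\]
take pointwise squared norms, and apply the elementary estimate $(a+b+c)^2 \leq 3(a^2+b^2+c^2)$ together with $\norm{S^o\oo{\nabla_{\oo{2}} H}} \leq c\norm{\nabla_{\oo{2}} H}$ to obtain the pointwise bound
\[
\tfrac{1}{4}\, H^4 \norm{A^o}^2 \leq c \oo{\norm{\Delta A^o}^2 + \nablanorm{2}{H}{2} + \norm{A^o}^6}.
\]

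After integrating against $\gamma^4$, Lemma \ref{Tracefreelemma4} controls the $\intM{\norm{\Delta A^o}^2 \gamma^4}$ contribution and \eqref{Tracefreelemma2,2} of Lemma \ref{Tracefreelemma2} controls $\intM{\nablanorm{2}{H}{2}\gamma^4}$, each directly by the desired right-hand side. Thus the only task left is to handle the sextic term $\intM{\norm{A^o}^6 \gamma^4}$.

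This sextic term is the main obstacle, since we must estimate it without invoking any $L^\infty$-bound on $A^o$. The plan is to apply the Michael-Simon inequality (Theorem \ref{MichaelSimon}) to $u = \norm{A^o}^3 \gamma^2$, which yields a sum of $\intM{\norm{A^o}^2 \norm{\nabla A^o} \gamma^2}$, $\cgam \intM{\norm{A^o}^3 \gamma}$, and $\intM{\norm{A^o}^3 \norm{H} \gamma^2}$. Cauchy-Schwarz on each factors out a common $\oo{\intM{\norm{A^o}^4 \gamma^2}}^{\frac{1}{2}}$, and the residual factors $\intM{\norm{\nabla A^o}^2 \gamma^2}$, $\cgam^2 \Ao{2}$, $\intM{\norm{A^o}^2 H^2 \gamma^2}$, together with $\intM{\norm{A^o}^4 \gamma^2}$ itself, are all controlled by the three estimates of Lemma \ref{Tracefreelemma2}. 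Writing $X = \intM{\norm{\nabla \Delta H}^2 \gamma^6}$, multiplying out produces cross-terms of the shape $\Ao{\frac{14}{3}}X^{\frac{2}{3}}$, $\cgam^2 \Ao{\frac{16}{3}} X^{\frac{1}{3}}$, and $\cgam^4 \Ao{6}$, each carrying a surplus power of $\Ao{2}$ beyond what appears on the target right-hand side. Under the smallness hypothesis $\Ao{2}\leq\varepsilon_0$ this surplus reduces the coefficient to $c\varepsilon_0^{\theta}$ for some $\theta>0$, and a single application of Young's inequality ($p=q=2$) to the mixed $\cgam^2 \Ao{\frac{16}{3}} X^{\frac{1}{3}}$ term rebalances it cleanly into an $\Ao{\frac{2}{3}} X^{\frac{2}{3}}$ contribution plus a $\cgam^4 \Ao{2}$ contribution. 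Combining everything gives the stated estimate.
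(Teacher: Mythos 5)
Your proof is correct, but it takes a genuinely different route from the paper. The paper's proof of Lemma \ref{Tracefreelemma6} begins by integrating $\intM{\norm{\nabla A^o}^2 H^2\gamma^4}$ by parts; upon substituting Simons' identity \eqref{TracefreeLemma1,1} for the resulting $\Delta A^o$, the quantity $-\frac{1}{2}\intM{\norm{A^o}^2 H^4\gamma^4}$ appears with a favourable sign and is moved to the left, so the two terms of the conclusion are generated simultaneously, and the remaining error terms are absorbed using Michael--Simon, Lemma \ref{Tracefreelemma2} and Lemma \ref{Tracefreelemma5}. You instead decouple the two terms entirely: you observe (correctly) that $\intM{\norm{\nabla A^o}^2 H^2\gamma^4}$ is already present on the left-hand side of Lemma \ref{Tracefreelemma4} and hence bounded by the desired right-hand side, and you then bound the second term pointwise by rearranging Simons' identity to
$\tfrac12 H^2 A^o=\Delta A^o - S^o\!\oo{\nabla_{\oo{2}}H} + \norm{A^o}^2 A^o$
before squaring. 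This reduces the whole lemma to estimating the residual sextic term $\intM{\norm{A^o}^6\gamma^4}$, which you handle by Michael--Simon with $u=\norm{A^o}^3\gamma^2$, Cauchy--Schwarz, and the three estimates in Lemma \ref{Tracefreelemma2}. The resulting products $\Ao{14/3}X^{2/3}$, $\cgam^2\Ao{16/3}X^{1/3}$, $\cgam^4\Ao{6}$ (with $X=\intM{\norm{\nabla\Delta H}^2\gamma^6}$) are indeed reconciled with the target right-hand side exactly as you say: the surplus powers of $\Ao{2}\leq\varepsilon_0$ are absorbed into the universal constant, and the mixed term is rebalanced by Young's inequality. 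Your argument avoids Lemma \ref{Tracefreelemma5} altogether and is, if anything, more transparent, since it treats the two left-hand terms independently rather than coupling them through integration by parts; the paper's version is shorter because the integration by parts produces both terms in one stroke. Both proofs are valid and rest on essentially the same elliptic machinery.
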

\begin{proof}
Using integration by parts, the Cauchy-Schwarz inequality, $\oo{\ref{TracefreeLemma1,1}}$ and $\oo{\ref{Prelimiaries2}}$ yields
\begin{align*}
\intM{\norm{\nabla A^{o}}^{2}H^{2}\gamma^{4}}
&\leq-\intM{\inner{A^{o},\nabla_{\oo{2}}H+\frac{1}{2}H^{2}A^{o}-\norm{A^{o}}^{2}A^{o}}H^{2}\gamma^{4}}+4\intM{\norm{\nabla A^{o}}^{2}\norm{A^{o}}\norm{H}\gamma^{4}}\\
&\quad+4\,\cgam\intM{\norm{\nabla A^{o}}\norm{A^{o}}\norm{H}\gamma^{3}}\\
&\leq\oo{32\intM{\nablanorm{2}{H}{2}\gamma^{4}}+\frac{1}{8}\intM{\norm{A^{o}}^{2}H^{4}\gamma^{4} }}-\frac{1}{2}\intM{\norm{A^{o}}^{2}H^{4}\gamma^{4}}\\
&\quad+\intM{\norm{A^{o}}^{4}H^{2}\gamma^{4}}+\oo{\frac{1}{2}\intM{\norm{\nabla A^{o}}^{2}H^{2}\gamma^{4}}+8\intM{\norm{\nabla A^{o}}^{2}\norm{A^{o}}^{2}\gamma^{4} }}\\
&\quad+\oo{\frac{1}{8}\intM{\norm{A^{o}}^{2}H^{4}\gamma^{4}}+32\, \cgam^{2}\intM{\norm{\nabla A^{o}}^{2}\gamma^{2} }}\,.
\end{align*}
Absorbing and applying Theorem \ref{MichaelSimon} twice with $u=\norm{\nabla
A^{o}}\norm{A^{o}}\gamma^{2},u=\norm{A^{o}}^{2}\norm{H}\gamma^{2}$ and the
Cauchy-Schwarz inequality, and using Lemmata $\ref{Tracefreelemma2}$ and
$\ref{Tracefreelemma5}$ gives
\begin{align*}
&\frac{1}{2}\intM{\norm{\nabla A^{o}}^{2}H^{2}\gamma^{4}}+\frac{1}{4}\intM{\norm{A^{o}}^{2}H^{4}\gamma^{4}}\\
&\leq32\oo{\intM{\nablanorm{2}{H}{2}\gamma^{4}}+\cgam^{2}\intM{\norm{\nabla A^{o}}^{2}\gamma^{2} }}+8\intM{\norm{\nabla A^{o}}^{2}\norm{A^{o}}^{2}\gamma^{4}}
 +\intM{\norm{A^{o}}^{4}H^{2}\gamma^{4}}\\
&\leq32\oo{\intM{\nablanorm{2}{H}{2}\gamma^{4}}+\cgam^{2}\intM{\norm{\nabla A^{o}}^{2}\gamma^{2} }}\\
&\quad+8\,\cmss\Bigl(\intM{\norm{\nabla_{\oo{2}}A^{o}}\norm{A^{o}}\gamma^{2}}+\intM{\norm{\nabla A^{o}}^{2}\gamma^{2}}+2\,\cgam\intM{\norm{\nabla A^{o}}\norm{A^{o}}\gamma}\\
&\quad+\intM{\norm{\nabla A^{o}}\norm{A^{o}}\norm{H}\gamma^{2}}\Bigr)^{2}+\cmss\Bigl(2\intM{\norm{\nabla A^{o}}\norm{A^{o}}\norm{H}\gamma^{2}}\\
&\quad+\intM{\norm{A^{o}}^{2}\norm{\nabla H}\gamma^{2}}+2\,\cgam\intM{\norm{A^{o}}^{2}\norm{H}\gamma}+\intM{\norm{A^{o}}^{2}H^{2}\gamma^{2}}\Bigr)^{2}\\
&\leq32\oo{\intM{\nablanorm{2}{H}{2}\gamma^{4}}+\cgam^{2}\intM{\norm{\nabla A^{o}}^{2}\gamma^{2} }}+c\Ao{2}\Bigl(\intM{\norm{\nabla_{\oo{2}}A^{o}}^{2}\gamma^{4}}\\
&\quad+\cgam^{2}\intM{\norm{\nabla A^{o}}^{2}\gamma^{2}}+\cgam^{2}\intM{\norm{A^{o}}^{2}H^{2}\gamma^{4}}\Bigr)+c\oo{\intM{\norm{\nabla A^{o}}^{2}\gamma^{2} }}^{2}\\
&\quad+c\Ao{2}\oo{\intM{\norm{\nabla A^{o}}^{2}H^{2}\gamma^{4}}+\intM{\norm{A^{o}}^{2}H^{4}\gamma^{4} }}\\
&\leq c\Ao{\frac{2}{3}}\oo{\intM{\norm{\nabla\Delta H}^{2}\gamma^{6} }}^{\frac{2}{3}}+c\cgam^{4}\Ao{2}\\
&\quad+c\Ao{2}\oo{\intM{\norm{\nabla A^{o}}^{2}H^{2}\gamma^{4}}+\intM{\norm{A^{o}}^{2}H^{4}\gamma^{4} }}.
\end{align*}
Absorbing and multiplying out then gives the statement of the lemma.
\end{proof}

Now we are in a position to prove the pointwise estimate.

\begin{theorem}\label{Tracefreetheorem1OLD}
Let $f:\Sigma\rightarrow\mathbb{R}^{3}$ be an immersion satisfying $\oo{\ref{Tracefreesmallness}}$ for $\varepsilon_{0}>0$ sufficiently small. Then there is a universal constant $c>0$ such that
\begin{equation}
\llll{A^{o}}_{\infty,\cc{\gamma=1}}^{6}\leq c\Ao{4}\oo{\intM{\norm{\nabla\Delta H}^{2}\gamma^{6}}+\cgam^{6}}\mbox{.}\label{Tracefreetheorem1,1}
\end{equation}
\end{theorem}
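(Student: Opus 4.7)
The plan is to apply Proposition \ref{EvolutionProposition2} with $T=A^o$ and $s=6$, which gives
\begin{equation*}
\llll{A^o}_{\infty,\cc{\gamma=1}}^{6}\le c\,\Ao{4}\bigl(I_1+I_2+I_3+I_4+\cgam^{6}\Ao{2}\bigr),
\end{equation*}
where $I_1=\intM{\nablanorm{3}{A^o}{2}\gamma^{6}}$, $I_2=\intM{\norm{\nabla A^o}^{2}\norm{A}^{4}\gamma^{6}}$, $I_3=\intM{\nablanorm{2}{A^o}{2}\norm{A}^{2}\gamma^{6}}$, and $I_4=\intM{\norm{A^o}^{2}\norm{\nabla A}^{2}\norm{A}^{2}\gamma^{6}}$. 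The task then reduces to bounding each $I_j$ by $c\bigl(\intM{\norm{\nabla\Delta H}^{2}\gamma^{6}}+\cgam^{6}\bigr)$, up to harmless factors of $\Ao{2}$ that can be absorbed using the smallness hypothesis $\Ao{2}\le\varepsilon_{0}$.

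The top-order term $I_1$ is where I expect the main difficulty. I would first integrate by parts twice to rewrite $I_1$ as $\intM{\norm{\nabla\Delta A^o}^{2}\gamma^{6}}$ plus commutator and $\nabla\gamma$-terms. Each commutator contributes a factor of the Riemann tensor, which in dimension two equals the Gauss curvature $K=\tfrac14 H^{2}-\tfrac12\norm{A^o}^{2}$ times a metric contraction; all such contributions fall within the reach of Lemmas \ref{Tracefreelemma2}--\ref{Tracefreelemma6}. Simons' identity \eqref{TracefreeLemma1,1} then replaces $\nabla\Delta A^o$ with $S^o(\nabla_{(3)}H)$ plus products of $H$, $A^o$, $\nabla H$, $\nabla A^o$. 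A Bochner-type computation (commute covariant derivatives on the scalar $H$ using the Ricci formula, then integrate by parts) finally produces
\begin{equation*}
\intM{\nablanorm{3}{H}{2}\gamma^{6}}\le c\intM{\norm{\nabla\Delta H}^{2}\gamma^{6}}+c\intM{K\nablanorm{2}{H}{2}\gamma^{6}}+(\nabla\gamma\text{-terms}),
\end{equation*}
and the Gauss-curvature correction is in turn handled by Lemma \ref{Tracefreelemma5}.

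For $I_2$, $I_3$, $I_4$, I would use the decomposition $\norm{A}^{2}=\norm{A^o}^{2}+\tfrac12 H^{2}$ and $\norm{\nabla A}^{2}\le c\norm{\nabla A^o}^{2}$ to split each integrand into pure $\norm{A^o}^{2k}$-pieces and pure $H^{2m}$-pieces. The $\norm{A^o}^{2k}$-contributions absorb under the smallness hypothesis after one application of Michael--Simon (Theorem \ref{MichaelSimon}) with $u$ a product of $A^o$ and $\nabla_{(j)}A^o$, which peels off a factor of $\Ao{2}$. The $H^{2m}$-contributions reduce to expressions of the form $\intM{\nablanorm{k}{A^o}{2}H^{2m}\gamma^{6}}$ with $k+m\le 3$, which I would treat exactly in the spirit of Lemma \ref{Tracefreelemma6}: apply Michael--Simon to a well-chosen $u$, integrate by parts, and feed the output back into Lemmas \ref{Tracefreelemma2}, \ref{Tracefreelemma5}, \ref{Tracefreelemma6} until only $\intM{\norm{\nabla\Delta H}^{2}\gamma^{6}}$, $\cgam^{2k}\Ao{2}$, and terms absorbable under smallness remain.

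The final step is to distribute powers: the lemmas produce output of the shape $\Ao{2/3}\bigl(\intM{\norm{\nabla\Delta H}^{2}\gamma^{6}}\bigr)^{2/3}$, and Young's inequality with conjugate exponents $3$ and $3/2$ separates these into $c\Ao{2}+c\intM{\norm{\nabla\Delta H}^{2}\gamma^{6}}$. Collecting all contributions and absorbing the remaining factors of $\Ao{2}\le\varepsilon_{0}$ against the $\Ao{4}$ prefactor yields the stated estimate. The hard part is the $I_1$ reduction: the chain of double-IBP, Simons, and Bochner produces a large number of commutator terms, and the key bookkeeping task is to verify that every such term factors through at least one of $A^o$, $\nabla A^o$, $\nabla\Delta H$, or $\cgam$, so that no uncontrolled $H$-only quantity survives in the final estimate.
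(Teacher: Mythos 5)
Your proposal starts by invoking Proposition \ref{EvolutionProposition2} with $T=A^o$ and $s=6$, but that proposition is stated under the hypothesis $\llll{A}_{2,\cc{\gamma>0}}^{2}\leq\varepsilon_{0}$, i.e.\ smallness of the \emph{full} second fundamental form in $L^2$. The theorem you are proving only assumes \eqref{Tracefreesmallness}, i.e.\ $\int_{\cc{\gamma>0}}\norm{A^{o}}^{2}\,d\mu\leq\varepsilon_{0}$. These are genuinely different: on a round sphere $\norm{A^{o}}\equiv0$ while $\norm{A}^{2}$ is not small. You cannot deduce the hypothesis of Proposition \ref{EvolutionProposition2} from \eqref{Tracefreesmallness}, so the very first step of your argument does not apply. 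The paper sidesteps this by using Theorem \ref{EvolutionTheorem1} (the multiplicative Sobolev inequality, with $\Psi=\norm{A^{o}}\gamma^{3/2}$), which has no curvature smallness hypothesis at all; it produces the prefactor $\Ao{2}$ and three fourth-power integrals, $\intM{\norm{\nabla A^{o}}^{4}\gamma^{6}}$, $\intM{\norm{A^{o}}^{4}H^{4}\gamma^{6}}$, and $\cgam^{4}\intM{\norm{A^{o}}^{4}\gamma^{2}}$, each of which closes after a single application of Michael--Simon (Theorem \ref{MichaelSimon}) feeding into Lemmata \ref{Tracefreelemma2}--\ref{Tracefreelemma6}.

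Even setting aside the hypothesis mismatch, your top-order term $I_{1}=\intM{\nablanorm{3}{A^{o}}{2}\gamma^{6}}$ sits one derivative order above anything controlled by the paper's elliptic lemmata, which stop at $\intM{\nablanorm{2}{A^{o}}{2}\gamma^{4}}$ (Lemma \ref{Tracefreelemma5}). Converting $\nabla_{(3)}A^{o}$ to $\nabla\Delta H$ via Simons' identity and a Bochner-type computation, as you suggest, generates Gauss-curvature corrections of the schematic form $\intM{K\nablanorm{2}{H}{2}\gamma^{6}}$, hence a term $\intM{H^{2}\nablanorm{2}{H}{2}\gamma^{6}}$ after the splitting $K=\tfrac14 H^{2}-\tfrac12\norm{A^{o}}^{2}$. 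That $H$-weighted fifth-order quantity is not controlled by Lemmata \ref{Tracefreelemma2}--\ref{Tracefreelemma6}, and under only the $A^{o}$-smallness hypothesis there is no obvious way to absorb it. This is exactly the sort of \emph{uncontrolled $H$-only quantity} you flag at the end of your proposal as the bookkeeping risk --- and it does in fact survive. The paper's choice of Sobolev estimate avoids producing any third-derivative term and keeps all $H$-weighted factors attached to at least one power of $A^{o}$ or $\nabla A^{o}$, precisely so that the Michael--Simon step can peel off a factor of $\Ao{2}$ and close the estimate.
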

\begin{proof}
Using the Sobolev inequality from Theorem $\ref{EvolutionTheorem1}$ we have
\[
\llll{\Psi}_{\infty}^{6}\leq C\llll{\Psi}_{2}^{2}\oo{\llll{\nabla\Psi}_{4}^{4}+\llll{\Psi H}_{4}^{4}}
\]
for any function $\Psi$ defined on $\Sigma$. Letting $\Psi=\norm{A^{o}}\gamma^{\frac{3}{2}}$ it follows that
\begin{equation}
\llll{A^{o}}_{\infty,\cc{\gamma=1}}^{6}\\
\leq c\Ao{2}\oo{\intM{\norm{\nabla A^{o}}^{4}\gamma^{6}}+\intM{\norm{A^{o}}^{4}H^{4}\gamma^{6}}+\cgam^{4}\intM{\norm{A^{o}}^{4}\gamma^{2} }}.\label{Tracefreetheorem1,2}
\end{equation}
Since the last term was taken care of in Lemma $\ref{Tracefreelemma2}$, we just need to look at the first two terms. For the first we apply Theorem \ref{MichaelSimon} with $u=\norm{\nabla A^{o}}^{2}\gamma^{3}$ and the results of Lemma $\ref{Tracefreelemma2}$ and Lemma $\ref{Tracefreelemma6}$:
\begin{align*}
\intM{\norm{\nabla A^{o}}^{4}\gamma^{6}}
&\leq c\oo{\intM{\norm{\nabla_{\oo{2}}A^{o}}\norm{\nabla A^{o}}\gamma^{3}}+\cgam\intM{\norm{\nabla A^{o}}^{2}\gamma^{2}}+\intM{\norm{\nabla A^{o}}^{2}\norm{H}\gamma^{3} }}^{2}\\
&\leq c\intM{\norm{\nabla A^{o}}^{2}\gamma^{2}}\cdot\Bigl(\intM{\nablanorm{2}{A^{o}}{2}\gamma^{4}}+\intM{\norm{\nabla A^{o}}^{2}H^{2}\gamma^{4}}+\cgam^{2}\intM{\norm{\nabla A^{o}}^{2}\gamma^{2}}\Bigr)\\
&\leq c\Ao{2}\intM{\norm{\nabla\Delta H}^{2}\gamma^{6}}+c \, \cgam^{6}\Ao{2}.
\end{align*}

We approach the second term in $\oo{\ref{Tracefreetheorem1,2}}$ similarly, this time utilising Theorem $\ref{MichaelSimon}$ with $u=\norm{A^{o}}^{2}H^{2}\gamma^{3}$:
\begin{align*}
\intM{\norm{A^{o}}^{4}H^{4}\gamma^{6}}
&\leq c\Bigl(\intM{\norm{\nabla A^{o}}\norm{A^{o}}H^{2}\gamma^{3}}+\intM{\norm{A^{o}}^{2}\norm{\nabla H}\norm{H}\gamma^{3}}+\cgam\intM{\norm{A^{o}}^{2}H^{2}\gamma^{2}}\\
&\qquad+\intM{\norm{A^{o}}^{2}\norm{H}^{3}\gamma^{3}}\Bigr)^{2}\\
&\leq c\intM{\norm{A^{o}}^{2}H^{4}\gamma^{4}}\cdot\oo{\intM{\norm{\nabla A^{o}}^{2}\gamma^{2}}+\intM{\norm{A^{o}}^{2}H^{2}\gamma^{2}}+\cgam^{2}\Ao{2}}\\
&\quad +c\intM{\norm{\nabla H}^{2}H^{2}\gamma^{4}}\cdot\intM{\norm{A^{o}}^{4}\gamma^{2}}\\
&\leq c\Ao{2}\intM{\norm{\nabla\Delta H}^{2}\gamma^{6}}+c \, \cgam^{6}\Ao{2}.
\end{align*}
Substituting these into $\oo{\ref{Tracefreetheorem1,2}}$ and again utilising Lemma $\ref{Tracefreelemma2}$ and the Cauchy-Schwarz inequality gives the desired result.
\end{proof}

Choosing a particular cutoff function gives Theorem \ref{Tracefreetheorem1} from Theorem \ref{Tracefreetheorem1OLD}.

\end{section}

\begin{section}{Preserved closeness to the sphere}
In this section we show that any immersion with small initial
energy (in the sense of $\oo{\ref{Theorem1,1}}$) evolving under \eqref{trilaplacianIntro1}
drives the energy monotonically toward zero.
As the energy is in a sense the average deviation of the initial
immersion from an embedded round sphere, the
result will essentially show that an immersion with initially small energy will
become more spherical under the flow.

We will first state a result of Li and Yau \cite{LiYau1}.
This result will later allow us, under a similar small energy condition, to
establish that $f\oo{\Sigma,t},t\in\left[0,T\right)$ is a one-parameter family
of embeddings. 

\begin{theorem}[$\text{\cite[Theorem 6]{LiYau1}}$]\label{LiYauTheorem1}
If an immersion $f:\Sigma\rightarrow\mathbb{R}^{3}$ has the property that
\begin{equation}
\frac{1}{4}\intM{H^{2}}<8\pi,\label{LiYauTheorem1,1}
\end{equation}
then $f$ is an embedding.
\end{theorem}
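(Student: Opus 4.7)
The plan is to reduce the claim to the Li--Yau multiplicity inequality
\[
\frac{1}{4}\intM{H^{2}}\geq 4\pi\cdot\#f^{-1}(x_{0})\quad\text{for every }x_{0}\in\R^{3},
\]
since under the hypothesis $\tfrac{1}{4}\intM{H^{2}}<8\pi$ this forces $\#f^{-1}(x_{0})\leq 1$ at every point of the image, and so the smooth immersion $f$ is injective. As $\Sigma$ is compact, any injective smooth immersion is automatically a smooth embedding (local diffeomorphism $+$ global injectivity $+$ closed image), which concludes the proof.

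The main content is therefore the multiplicity inequality, for which I would invoke Simon's monotonicity formula. Fix $x_{0}\in\R^{3}$, set $\mathbf{r}(p):=f(p)-x_{0}$, and recall the first variation identity $\operatorname{div}_{\Sigma}\mathbf{r}^{\top}=2+\inner{\vec H,\mathbf{r}}$. Testing this against a smooth radial cutoff $\phi(\norm{\mathbf{r}}/r)$ with $\phi(s)=(1-s)_{+}$, integrating over $\Sigma$, and differentiating in $r$ produces a differential identity for the rescaled area $A(r)/(\pi r^{2})$, where $A(r):=\mu\oo{f^{-1}(B_{r}(x_{0}))}$. The cross term involving $\inner{\vec H,\mathbf{r}^{\perp}}$ that appears has indefinite sign and is absorbed by completing the square $\bigl|\vec H+\mathbf{r}^{\perp}/\norm{\mathbf{r}}^{2}\bigr|^{2}\geq 0$, yielding the sharp comparison
\[
\frac{A(\sigma)}{\pi\sigma^{2}}\leq\frac{A(\rho)}{\pi\rho^{2}}+\frac{1}{16\pi}\int_{f^{-1}(B_{\rho}(x_{0}))}\norm{H}^{2}d\mu\qquad\text{for all }0<\sigma<\rho.
\]

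The Li--Yau bound is then recovered by letting $\sigma\downarrow 0$ and $\rho\uparrow\infty$. At small scales, smoothness of $f$ makes the image near each preimage look like a graph over its tangent plane, so $A(\sigma)/(\pi\sigma^{2})\to\#f^{-1}(x_{0})$; at large scales, compactness of $\Sigma$ bounds $A(\rho)\leq\mu(\Sigma)<\infty$, so $A(\rho)/\rho^{2}\to 0$, while $\int_{f^{-1}(B_{\rho}(x_{0}))}\norm{H}^{2}d\mu\to\intM{\norm{H}^{2}}$. The main obstacle is the careful derivation of the sharp monotonicity formula itself: the cross term $\inner{\vec H,\mathbf{r}^{\perp}}$ must be tracked through the integration by parts with the right sign, and the completion-of-squares step must be \emph{exact}, so that no stray error term is left to spoil the limit passage. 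Once monotonicity is in hand in this sharp form, the remaining limit evaluations and the topological reduction from injective immersion to smooth embedding are entirely routine.
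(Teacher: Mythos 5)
The paper does not supply a proof of this statement; it is quoted directly from Li and Yau, whose original argument runs through conformal volume and the first eigenvalue. Your reconstruction instead follows the now-standard Simon--Kuwert--Sch\"atzle derivation via a sharp monotonicity formula, which is a different but equally legitimate route, and arguably the more natural one here since the same monotonicity (Lemma~\ref{SimonLemma}) is already invoked in Section~9.

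The architecture of your argument is sound: the density bound $\frac{1}{4}\intM{H^{2}}\geq 4\pi\,\#f^{-1}(x_{0})$, the deduction of injectivity from the strict hypothesis $\frac{1}{4}\intM{H^{2}}<8\pi$, and the topological reduction (an injective immersion of a compact manifold without boundary is an embedding). One intermediate claim is stated too strongly, though. Completing the square does not produce the clean two-scale comparison
\[
\frac{A(\sigma)}{\pi\sigma^{2}}\leq\frac{A(\rho)}{\pi\rho^{2}}+\frac{1}{16\pi}\int_{f^{-1}(B_{\rho}(x_{0}))}H^{2}\,d\mu
\]
for arbitrary $0<\sigma<\rho$; what is actually monotone in $r$ is the quantity
\[
g(r)=\frac{A(r)}{\pi r^{2}}+\frac{1}{16\pi}\int_{f^{-1}(B_{r}(x_{0}))}H^{2}\,d\mu+\frac{1}{2\pi r^{2}}\int_{f^{-1}(B_{r}(x_{0}))}\inner{\vec{H},f-x_{0}}\,d\mu,
\]
so the comparison between two finite scales still carries the signed cross terms $\frac{1}{2r^{2}}\int\inner{\vec{H},f-x_{0}}\,d\mu$, which do not cancel. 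Your conclusion is nevertheless correct because those cross terms vanish in \emph{both} of the limits you take: as $\sigma\downarrow 0$, Cauchy--Schwarz gives
\[
\Bigl|\sigma^{-2}\int_{f^{-1}(B_{\sigma}(x_{0}))}\inner{\vec{H},f-x_{0}}\,d\mu\Bigr|\leq\sigma^{-1}\oo{\int_{f^{-1}(B_{\sigma}(x_{0}))}H^{2}\,d\mu}^{\frac{1}{2}}\oo{\frac{A(\sigma)}{\sigma^{2}}}^{\frac{1}{2}}\longrightarrow 0
\]
since $H\in L^{2}$ and the density at $x_{0}$ is finite, while as $\rho\uparrow\infty$ compactness of $\Sigma$ fixes the integral and the prefactor $\rho^{-2}$ kills it. With that repair, the passage $\sigma\downarrow 0$, $\rho\uparrow\infty$ in $g(\sigma)\leq g(\rho)$ yields exactly $\#f^{-1}(x_{0})\leq\frac{1}{16\pi}\intM{H^{2}}$, and the theorem follows as you say.
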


\begin{theorem}[Preserved Closeness to Spheres]\label{Preservedsmallnesstheorem1}
Let $f:\Sigma\times[0,T)\rightarrow\mathbb{R}^{3}$ satisfy \eqref{trilaplacianIntro1}.
Then there exists an $\varepsilon_{0}>0$ such that if
\begin{equation}
\intM{\norm{A^{o}}^{2}}\Big|_{t=0}\leq\varepsilon_{0}<8\pi\label{Initialsmallness}
\end{equation}
then for $t<T$ we have the estimate
\[
\frac{d}{dt}\intM{\norm{A^{o}}^{2}}\leq-\frac{1}{2}\intM{\norm{\nabla\Delta H}^{2}}.
\]
In particular, $\Sigma_{t}=f\oo{\Sigma,t},t\in\left[0,T\right)$ is a one-parameter family of embeddings.
\end{theorem}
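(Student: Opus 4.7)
The plan is to reduce the monotonicity claim to controlling a single cross-term $\intM{\norm{A^{o}}^{2}H\Delta^{2}H}$ by $\intM{\norm{\nabla\Delta H}^{2}}$, and then to combine the resulting monotonicity with Theorem \ref{LiYauTheorem1}. Since the topology of $\Sigma$ is preserved by the smooth flow, Gauss--Bonnet gives $\intM{\norm{A^{o}}^{2}}=\frac{1}{2}\intM{H^{2}}-4\pi\chi\oo{\Sigma}$, so $\frac{d}{dt}\intM{\norm{A^{o}}^{2}}=\frac{1}{2}\frac{d}{dt}\intM{H^{2}}$. Using the evolution equations of Lemma \ref{EvolutionLemma1}, integration by parts (unrestricted on a closed surface), and the algebraic identity $2\norm{A}^{2}-H^{2}=2\norm{A^{o}}^{2}$ (equivalent to $\norm{A}^{2}=H^{2}-2K$), a direct computation delivers
\begin{equation*}
\frac{d}{dt}\intM{\norm{A^{o}}^{2}}=-\intM{\norm{\nabla\Delta H}^{2}}+\intM{\norm{A^{o}}^{2}H\Delta^{2}H}\,.
\end{equation*}

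The main step is to show the cross term is bounded by $\frac{1}{2}\intM{\norm{\nabla\Delta H}^{2}}$ in absolute value. Integrating by parts once writes it as $-\intM{\inner{\nabla\oo{\norm{A^{o}}^{2}H},\nabla\Delta H}}$, so Cauchy--Schwarz with a Peter--Paul weight reduces the task to estimating $\intM{\norm{\nabla\oo{\norm{A^{o}}^{2}H}}^{2}}$. The product rule combined with \eqref{Prelimiaries2} controls this integrand pointwise by $C\oo{\norm{A^{o}}^{2}H^{2}+\norm{A^{o}}^{4}}\norm{\nabla A^{o}}^{2}$. I would take the cutoff $\gamma\equiv 1$ (so that $\cgam=0$), which specialises Lemmas \ref{Tracefreelemma2} and \ref{Tracefreelemma6} to
\[
\intM{\norm{\nabla A^{o}}^{2}}\leq c\llll{A^{o}}_{2}^{4/3}\oo{\intM{\norm{\nabla\Delta H}^{2}}}^{1/3},\qquad \intM{\norm{\nabla A^{o}}^{2}H^{2}}\leq c\llll{A^{o}}_{2}^{2/3}\oo{\intM{\norm{\nabla\Delta H}^{2}}}^{2/3},
\]
and specialises Theorem \ref{Tracefreetheorem1OLD} to the pointwise bound $\llll{A^{o}}_{\infty}^{6}\leq c\llll{A^{o}}_{2}^{4}\intM{\norm{\nabla\Delta H}^{2}}$. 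Pulling $\llll{A^{o}}_{\infty}^{2}$ or $\llll{A^{o}}_{\infty}^{4}$ out of the two summands and feeding the pointwise estimate back in, the exponents match so each piece collapses to a constant multiple of $\llll{A^{o}}_{2}^{2}\intM{\norm{\nabla\Delta H}^{2}}$ or $\llll{A^{o}}_{2}^{4}\intM{\norm{\nabla\Delta H}^{2}}$; for $\varepsilon_{0}$ sufficiently small this absorbs against the good term with constant $\frac{1}{2}$, yielding the differential inequality.

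To propagate the smallness hypothesis over the whole interval $\cc{0,T}$, a short continuity bootstrap suffices: by continuity of $t\mapsto\intM{\norm{A^{o}}^{2}}$ the inequality $\intM{\norm{A^{o}}^{2}}\leq\varepsilon_{0}$ persists on a small initial time interval, the above differential inequality is then valid and forces the quantity to be non-increasing, and iteration closes the loop. The embedding conclusion follows at once: monotonicity gives $\intM{\norm{A^{o}}^{2}}<8\pi$ for every $t$, whence Gauss--Bonnet yields $\frac{1}{4}\intM{H^{2}}=\frac{1}{2}\intM{\norm{A^{o}}^{2}}+2\pi\chi\oo{\Sigma}<8\pi$ (using $\chi\oo{\Sigma}\leq 2$), and Theorem \ref{LiYauTheorem1} supplies the embedded property. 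The main obstacle is the exponent bookkeeping in the absorption step: for the cross term to come in with a constant strictly less than $1$ in front of $\intM{\norm{\nabla\Delta H}^{2}}$, the interpolation inequalities of Section 6 must be combined in exactly matching powers so that the final right-hand side depends only on $\llll{A^{o}}_{2}$ times $\intM{\norm{\nabla\Delta H}^{2}}$, with no residual curvature quantity left over to prevent absorption.
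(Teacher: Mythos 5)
Your proposal is correct and follows the same overall plan as the paper: Gauss--Bonnet to reduce the derivative of $\intM{\norm{A^o}^2}$ to that of $\tfrac{1}{2}\intM{H^2}$, the evolution equation for $H$ plus integration by parts to exhibit the good term $-\intM{\norm{\nabla\Delta H}^2}$ and the cross term $\intM{\norm{A^o}^2 H\Delta^2 H}$, absorption of the cross term via the $\gamma\equiv 1$ specialisations of the Section 6 interpolation machinery, a continuity bootstrap to propagate smallness, and Theorem \ref{LiYauTheorem1} for the embedding conclusion. The one real point of divergence is the treatment of the cross term: you integrate by parts a second time to $-\intM{\inner{\nabla\oo{H\norm{A^o}^2},\nabla\Delta H}}$, apply Peter--Paul, estimate $\nabla\oo{H\norm{A^o}^2}$ pointwise, and close the exponents with Lemmas \ref{Tracefreelemma2} and \ref{Tracefreelemma6} together with Theorem \ref{Tracefreetheorem1OLD}; the paper instead bounds the cross term by $\llll{A^o}_\infty^2\norm{\intM{H\Delta^2 H}}=\llll{A^o}_\infty^2\intM{\norm{\Delta H}^2}$ and closes with Lemma \ref{Tracefreelemma2} and the pointwise bound. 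Your route is arguably the cleaner realisation of the same idea: the paper's first displayed inequality is not valid as literally written (one cannot in general pull $\llll{A^o}_\infty^2$ out and keep the absolute value outside the remaining integral without a sign structure), whereas your Cauchy--Schwarz split avoids this, and your exponent bookkeeping, landing on $c\llll{A^o}_2^2\intM{\norm{\nabla\Delta H}^2}$ and $c\llll{A^o}_2^4\intM{\norm{\nabla\Delta H}^2}$ for the two pieces, checks out. One small imprecision: continuity at $t=0$ only gives $\intM{\norm{A^o}^2}\le 2\varepsilon_0$ on a short initial interval (as the paper uses), not $\le\varepsilon_0$; the interpolation lemmas should therefore be invoked under this slightly relaxed smallness, which is a cosmetic adjustment.
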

\begin{proof}
By continuity, given a flow satisfying the condition
$\oo{\ref{Initialsmallness}}$, there exists a time interval
$I_{0,\delta_{1}}=\left[0,\delta_{1}\right),\delta_{1}>0$ such that for
$\tau\in I_{0,\delta_{1}},$
\begin{equation}
\intM{\norm{A^{o}}^{2}}\Big|_{t=\tau}\leq2\, \varepsilon_{0}.\label{Preservedsmallnesstheorem1,1}
\end{equation}
Let us now look at the evolution equation associated to this integral.
We first choose a normalised frame as in the proof of Lemma $\ref{TracefreeLemma1}$. In this frame, we compute 
\[
\norm{A^{o}}^{2}=\frac{1}{2}\oo{\kappa_{1}-\kappa_{2}}^{2}=\frac{1}{2}\oo{\kappa_{1}^{2}+\kappa_{2}^{2}+2\kappa_{1}\kappa_{2}-4\kappa_{1}\kappa_{2}}=\frac{1}{2}H^{2}-2K.
\]
Applying the Gauss-Bonnet formula gives
\begin{equation}
\label{EQgb}
\frac{d}{dt}\intM{\norm{A^{o}}^{2}} =\frac{1}{2}\frac{d}{dt}\intM{H^{2}}.
\end{equation}
The evolution equation for mean curvature from Lemma $\ref{EvolutionLemma1}$ now implies
\[
\frac{d}{dt}\intM{\norm{A^{o}}^{2}}=\intM{H\Delta^{3}H}+\intM{H\Delta^{2}H\norm{A^{o}}^{2}}.
\]
Performing integration by parts yields
\begin{align*}
\frac{d}{dt}\intM{\norm{A^{o}}^{2}}&\leq -\intM{\norm{\nabla\Delta H}^{2}}+\llll{A^{o}}_{\infty}^{2}\norm{\intM{H\Delta^{2}H}}\\
&=-\intM{\norm{\nabla\Delta H}^{2}}+\llll{A^{o}}_{\infty}^{2}\intM{\norm{\Delta H}^{2}}.
\end{align*}
Combining this with Lemma $\ref{Tracefreelemma2}$ and Theorem $\ref{Tracefreetheorem1}$ with $\gamma\equiv1$ then gives 
\begin{equation*}
\frac{d}{dt}\intM{\norm{A^{o}}^{2}}\leq\oo{c\llll{A^{o}}_{2}^{2}-1}\intM{\norm{\nabla\Delta H}^{2}}\leq-\frac{1}{2}\intM{\norm{\nabla\Delta H}^{2}}
\end{equation*}
for $\varepsilon_{0}$ sufficiently small. So, we know that if
$\oo{\ref{Initialsmallness}}$ holds for $\varepsilon_{0}$ sufficiently small,
then it will decrease monotonically on the interval $I_{0,\delta_{1}}$. We may
repeat this process repeatedly right up to the maximal time of existence of the
flow, allowing us deduce that the integral $\intM{\norm{A^{o}}^{2}}$ decreases
monotonically on the interval $\left[0,T\right)$. Finally, a quick check shows
that under our initial smallness condition $\oo{\ref{LiYauTheorem1,1}}$ is
satisfied, so Theorem \ref{LiYauTheorem1} applies on $[0,T)$, completing the proof.
\end{proof}

\begin{rmk}
Theorem \ref{Preservedsmallnesstheorem1} in particular implies, in view of
\eqref{EQgb}, that the Willmore energy is non-incresing under any flow
\eqref{trilaplacianIntro1} satisfying \eqref{Initialsmallness}.
\end{rmk}
\end{section}

\begin{section}{The gap lemma}

In this section we do not assume that the immersion $f:\Sigma\rightarrow\R^3$ is closed and compact; instead, we only assume that it is proper.
Our goal is to prove Theorem \ref{GapTheorem1}.
We begin with some remarks on the theorem.

\begin{rmk}
If we do not know a-priori whether or not $f$ is compact, but we do know that \eqref{GLA1} is satisfied, then we may still conclude that $f$ is either a plane or a sphere, i.e. an embedded umbilic.
\end{rmk}
\begin{rmk}
Clearly the regularity assumption that $f$ is locally $C^6$ is much more than
what is required; indeed, Corollary \ref{CYepsreg} indicates that weak
solutions in $W^{5,2}_{\text{loc}}$ satisfying \eqref{GLA1} are classical and
smooth.
\end{rmk}

\begin{proof}[Proof of the Gap Lemma]
We shall consider the compact and noncompact cases separately. In each case we
will prove that $\norm{A^{o}}^{2}\equiv0$. The compact case is relatively
straightforward:
under the assumption $\Delta^{2}H\equiv0$ integration by parts gives us
\[
\intM{\norm{\nabla\Delta H}^{2}}=-\intM{\Delta H\cdot\Delta^{2}H}=0,
\] 
forcing $\nabla\Delta H\equiv0$. Our pointwise bound from Theorem $\ref{Tracefreetheorem1}$ with $\gamma\equiv1$ then yields
\[
\llll{A^{o}}_{\infty}^{6}\leq c\llll{A^{o}}_{2}^{4}\intM{\norm{\nabla\Delta H}^{2}}=0,
\]
and so
\begin{equation}
\norm{A^{o}}\equiv0.\label{GapTheorem1,1}
\end{equation}
For the noncompact case we will have to work a little harder.
Performing integration by parts followed by using the Cauchy-Schwarz inequality
yields
\begin{equation*}
\intM{\norm{\nabla\Delta H}^{2}\gamma^{6}}\leq\frac{1}{2}\intM{\norm{\nabla\Delta H}^{2}\gamma^{6}}+18\, \cgam^{2}\intM{\norm{\Delta H}^{2}\gamma^{4}},
\end{equation*}
so that absorbing and multiplying out yields
\begin{equation}
\intM{\norm{\nabla\Delta H}^{2}\gamma^{6}}\leq36\, \cgam^{2}\intM{\norm{\Delta H}^{2}\gamma^{4}}.\label{GapTheorem1,2}
\end{equation}
Combining this with Lemma $\ref{Tracefreelemma5}$ and using the Cauchy-Schwarz inequality,
\begin{align*}
\intM{\norm{\nabla\Delta H}^{2}\gamma^{6}}&\leq c \, \cgam^{2}\left[ \Ao{\frac{2}{3}}\oo{\intM{\norm{\nabla\Delta H}^{2}\gamma^{6} }}^{\frac{2}{3}}+\cgam^{4}\Ao{2} \right]\\
&\leq\frac{1}{2}\intM{\norm{\nabla\Delta H}^{2}\gamma^{6}}+c \, \cgam^{6}\Ao{2},
\end{align*}
implying that
\begin{equation}
\intM{\norm{\nabla\Delta H}^{2}\gamma^{6}}\leq c \, \cgam^{6}\Ao{2}.\label{GapTheorem1,4}
\end{equation}
Now choose $\gamma$ such that
\[
\gamma\oo{p}=\Phi\oo{\frac{1}{\rho}\norm{f\oo{p} }},
\]
where $\Phi\in C^{1}\oo{\mathbb{R}}$ and
\[
\chi_{B_{\frac{1}{2}}\oo{0}}\leq\Phi\leq\chi_{B_{1}\oo{0}}.
\]
With this choice for $\gamma$ we have $\cgam=\frac{c}{\rho}$ for some universal
constant $c>0$.
Taking $\rho\rightarrow\infty$ in $\oo{\ref{GapTheorem1,4}}$ yields
\[
\intM{\norm{\nabla\Delta H}^{2}}\leq c\liminf_{\rho\to\infty}\frac{1}{\rho^{6}}\int_{f^{-1}\oo{B_{\rho}\oo{0} }}{\norm{A^{o}}^{2}\,d\mu}=0.
\]
Here we have used the smallness assumption \eqref{GLA1} in the last step.
This tells us that $\nabla\Delta H\equiv0$, from which we can utilise our
pointwise bound from Theorem $\ref{Tracefreetheorem1}$ with $\rho\to\infty$ to
obtain
\begin{equation}
\llll{A^{o}}_{\infty}^{6}\leq c\liminf_{\rho\to\infty}\frac{1}{\rho^{6}}\int_{f^{-1}\oo{B_{\rho}\oo{0} }}{\norm{A^{o}}^{2}\,d\mu}=0,\nonumber 
\end{equation}
which again implies
\begin{equation}
\norm{A^{o}}\equiv0.\label{GapTheorem1,6}
\end{equation}

By observing $\oo{\ref{GapTheorem1,1}}$ and $\oo{\ref{GapTheorem1,6}}$ we
conclude that, compact or not, we must have $A^{o}\equiv0$ on $\Sigma$.
In terms of principal curvatures at any particular point, this means that
\[
\norm{A^{o}}^{2}=\frac{1}{2}\oo{\kappa_{1}-\kappa_{2}}^{2}\equiv0\text{  on  }\Sigma.
\]
Let us note that since $f$ is of class $C^6_{\text{loc}}$, non-smooth unions of
planes and spheres are not possible.
A classical result from Codazzi then tells us that $f(\Sigma)$ must be a standard flat
plane or standard round sphere.
We conclude that if $f\oo{\Sigma}$ is a round sphere if
compact, and a flat plane if noncompact.
\end{proof}
\end{section}

\begin{section}{Construction of the blowup}

Let $f:\Sigma\times\left[0,T\right)\rightarrow\mathbb{R}^{3}$ satisfy \eqref{trilaplacianIntro1}.
We make the definition
\begin{equation}
\label{EQcc}
\Kappa\oo{p,\tau}=\sup_{x\in\mathbb{R}^{3}}\int_{f^{-1}\oo{B_{r}\oo{x} }}{\norm{A}^{2}\,d\mu}\Big|_{t=\tau},
\end{equation}
and pick an arbitary decreasing sequence of radii $\left\{r_{j}\right\}\searrow0$.
We assume that the curvature concentrates in then sense that for each $j$
\[
t_{j}:=\inf\left\{t\geq0:\Kappa\oo{r_{j},t}>\varepsilon_{1}\right\}<T.
\]
Here $\varepsilon_{1}:=\frac{\varepsilon_{0}}{C},$ where $\varepsilon_{0},C$
are the same constants as in the Lifespan Theorem. Note that by construction,
$\left\{t_{j}\right\}$ is a monotonically increasing sequence. By the
definition of $t_{j}$, this means that
\begin{equation}
\int_{f^{-1}\oo{B_{r_{j}}\oo{x} }}{\norm{A}^{2}\,d\mu}\Big|_{t=t_{j}}\leq\varepsilon_{1}\text{  for any  }x\in\mathbb{R}^{3}.\label{Blowup1}
\end{equation}
However, for each $j$ it is possible to find a point $x_{j}\in\mathbb{R}^{3}$ such that 
\begin{equation}
\int_{f^{-1}\overline{\oo{B_{r_{j}}\oo{x_{j} } } }}{\norm{A}^{2}\,d\mu}\Big|_{t=t_{j}}\geq\varepsilon_{1}.\label{Blowup2}
\end{equation}
To see this, we can take a sequence $\left\{v_j\right\}\nearrow\infty,v_j>1$ and
consider the times $\tau_{j}=t_{j}+v_j^{-1}$ and radii
$\lambda_{j}=r_{j}+v^{-2}$.
By construction we have
\[
\tau_{j}\searrow t_{j}\text{  and  }\lambda_{j}\nearrow r_{j}\text{  as  }j\to\infty.
\]
Then by continuity, the definition of $t_{j}$, and the fact that $\tau_{j}>t_{j}$, for each $v$ we can choose a point $x_{j+v^{-1}}\in\mathbb{R}^{3}$ such that
\[
\int_{f^{-1}\oo{B_{\lambda_{j}}\oo{x_{j+v^{-1} } } }}{\norm{A}^{2}\,d\mu}\Big|_{t=\tau_{j}}\geq\varepsilon_{1},
\]
so that taking $v\to\infty$ yields $\oo{\ref{Blowup2}}$.

Now consider the sequence of rescaled immersions 
\begin{equation*}
f_{j}: \Sigma\times\left[-r_{j}^{-6}t_{j},r_{j}^{-6}\oo{T-{t_{j} }}\right)\rightarrow\mathbb{R}^{3}
\text{, where }f_j(p,t) = \frac{1}{r_{j}}\oo{f\oo{p,t_{j}+r_{j}^{6}t}-x_{j}}\,.
\end{equation*}
Let us define $\Kappa_{j}$ to be the value of $\Kappa$, computed as in \eqref{EQcc}, with the immersion $f_j$ in place of $f$.
That is
\[
\Kappa_j\oo{p,\tau}=\sup_{x\in\mathbb{R}^{3}}\int_{f_j^{-1}\oo{B_{r}\oo{x} }}{\norm{A_j}^{2}\,d\mu_j}\Big|_{t=\tau}\,.
\]
Quantities decorated with the subscript $j$ correspond to the immersion $f_j$.
One may check that
\[
f_{j}^{-1}\overline{\oo{B_{1}\oo{0} }}\Big|_{t=0}=f^{-1}\overline{\oo{B_{r_{j}}\oo{x_{j} } }}\Big|_{t=t_{j}},
\]
so that by $\oo{\ref{Blowup2}}$ we conclude  
\[
\int_{f_{j}^{-1}\overline{\oo{B_{1}\oo{0} } }}{\norm{A_j}^{2}\,d\mu_j}\Big|_{t=0}\geq\varepsilon_{1}.
\]
Similarly, one may check that for $\tau\leq0$:
\begin{equation}
\Kappa_{j}\oo{1,\tau}\leq\sup_{x\in\mathbb{R}^{3}}\int_{f^{-1}\oo{B_{r_{j}}\oo{x}
}}{\norm{A}^{2}\,d\mu}\Big|_{t=t_{j}+r_{j}^{6}\tau\leq t_{j}}\leq
\varepsilon_{1}.\label{Blowup3}
\end{equation}
Recall that the Lifespan Theorem tells us that the original flow $f$
will exist up until such time as the curvature concentrates, with $T$
satisfying $T\geq\frac{1}{C}$.
For the rescaled immersions, this implies
\[
r_{j}^{-6}\oo{T-t_{j}}\geq\frac{1}{C}\text{  for each  }j.
\]
Additionally, statement $\oo{\ref{lifespan2}}$ of the Lifespan Theorem tells us that for $0\leq \tau\leq\frac{1}{C}$ we have
\[
\Kappa_{j}\oo{1,\tau}\leq C\varepsilon_{1}=\varepsilon_{0}\text{  for  }0<\tau\leq\frac{1}{C}.
\]
We then apply the interior estimates from Theorem $\ref{InteriorEstimatesTheorem1}$ on the cylinder $B_{1}\oo{x}\times\left(t-1,t\right]$ to conclude that 
\begin{equation}
\llll{\nabla_{\oo{k}}A_j}_{\infty,f_j}\leq c\oo{k}\text{  for  }-r_{j}^{-6}t_{j}+1\leq t\leq\frac{1}{C}\,.\label{Blowup4}
\end{equation}
Note that the covariant derivative above is that associated with $f_j$.
Consider the sets
\[
\Sigma_j\oo{R}:=\left\{p\in\Sigma:\norm{f_{j}\oo{p}}<R\right\}=\Sigma\cap f_{j}^{-1}\oo{B_{R}}\,.
\]
Let us now show that $\mu_{j}\oo{\Sigma\oo{R}}\leq c\oo{R}$ for any $R>0$.
Recall the following monotonicity formula due to Simon.
\begin{lemma}[$\text{\cite[Equation (1.3)]{Simon1}}$]
\label{SimonLemma}
Let $f:\Sigma\rightarrow\R^3$ be an immersed surface.
Then for $0 < \sigma \le \rho < \infty$ we have
\[
\frac{|\Sigma_\sigma|}{\sigma^2}
 \le c\bigg( \frac{|\Sigma_\rho|}{\rho^2} + \int_{\Sigma_\rho} H^2\,d\mu \bigg)\,.
\]
\end{lemma}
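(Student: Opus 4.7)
The plan is Simon's classical monotonicity argument for area ratios of immersed surfaces, with $x_0 = 0$ for notational simplicity (the general case is identical up to translation). I would work with the sublevel sets $\Sigma_r = \Sigma \cap f^{-1}(B_r(0))$ and derive a differential inequality for the ratio $\phi(r) := |\Sigma_r|/r^2$, then integrate from $\sigma$ to $\rho$.

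First I would recall the tangential divergence identity on a 2-dimensional immersion in $\R^3$: for the position field $x$ (restricted to $\Sigma$), decomposed into tangential and normal parts relative to $\Sigma$, one has
$$
\operatorname{div}_{\Sigma}(x) = 2 - \inner{x, \vec H}\,,
$$
where $\vec H$ denotes the mean curvature vector. Integrating this over $\Sigma_r$ and applying the Riemannian divergence theorem (valid for a.e.\ $r$ by Sard's theorem applied to $p \mapsto |f(p)|$, which guarantees $\partial\Sigma_r$ is a smooth 1-manifold) yields
$$
2|\Sigma_r| - \int_{\Sigma_r} \inner{x, \vec H}\,d\mu = \int_{\partial\Sigma_r} \inner{x,\eta}\,d\mathcal{H}^1,
$$
where $\eta$ is the outward conormal along $\partial\Sigma_r$.

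Next, I would invoke the coarea formula for the function $|f|$ to recognise the boundary integral as essentially $r \frac{d}{dr}|\Sigma_r|$, up to the tangential factor $|\nabla^\Sigma |f||\le 1$ which only strengthens the inequality. Combined with the pointwise bound $|x|\le r$ on $\Sigma_r$, this yields
$$
r \frac{d}{dr}|\Sigma_r| \ge 2|\Sigma_r| - r\int_{\Sigma_r} |\vec H|\,d\mu.
$$
Dividing by $r^3$ rewrites this as a lower bound for $\frac{d}{dr}\phi(r)$, namely
$$
\frac{d}{dr}\phi(r) \ge -\frac{1}{r^2}\int_{\Sigma_r} |\vec H|\,d\mu.
$$

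Finally, I would integrate from $\sigma$ to $\rho$ and apply Cauchy-Schwarz to the right-hand side, bounding $\int_{\Sigma_r}|\vec H|\,d\mu \le |\Sigma_r|^{1/2}\bigl(\int_{\Sigma_\rho}H^2\,d\mu\bigr)^{1/2}$. Applying Young's inequality to split the product into a term controllable by $\phi$ and the $L^2$ curvature integral (and absorbing the small area factor using a Gronwall-type iteration, or directly bounding $|\Sigma_r|^{1/2}/r^2$ by $\sup_{r\in[\sigma,\rho]}\phi(r)^{1/2}/r$) yields the claimed estimate $\phi(\sigma) \le c(\phi(\rho) + \int_{\Sigma_\rho}H^2\,d\mu)$. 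The main technical obstacle is the coarea step connecting $\int_{\partial\Sigma_r}\inner{x,\eta}\,d\mathcal{H}^1$ to $r\frac{d}{dr}|\Sigma_r|$ in a way that respects the \emph{immersed} rather than embedded setting, and keeping track of which direction of inequality the one-sided bound $|\nabla^\Sigma|f||\le 1$ produces; the rest is routine.
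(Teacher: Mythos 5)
The paper does not prove this lemma; it simply cites Simon's paper, so there is no ``paper's own proof'' to compare against. With that said, the first half of your sketch (tangential divergence of the position field, the divergence theorem on sublevel sets, the coarea identification of the boundary term with $r\frac{d}{dr}|\Sigma_r|$, and the resulting differential inequality $\phi'(r) \ge -r^{-2}\int_{\Sigma_r}|\vec H|\,d\mu$) is essentially the correct starting point, modulo a harmless sign error in the divergence formula (with the paper's convention $\Delta f = \vec H$ one has $\operatorname{div}_\Sigma x^T = 2 + \inner{x,\vec H}$, not $2 - \inner{x,\vec H}$).

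However, the final step has a genuine gap. Integrating the differential inequality and inserting Cauchy--Schwarz as you propose gives
\[
\phi(\sigma) \le \phi(\rho) + \Big(\int_{\Sigma_\rho}H^2\,d\mu\Big)^{1/2}\int_\sigma^\rho\frac{\phi(r)^{1/2}}{r}\,dr\,,
\]
and the inner integral is $\int_\sigma^\rho r^{-1}\,dr = \log(\rho/\sigma)$ after you bound $\phi(r)^{1/2}$ by its supremum. Young's inequality and a self-improvement argument then only yield $\phi(\sigma) \le c\big(\phi(\rho) + \log^2(\rho/\sigma)\int_{\Sigma_\rho}H^2\big)$, which is not the scale-independent constant claimed. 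The naive differential-inequality route cannot produce an absolute $c$.

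Simon's actual argument is structurally different at exactly this point. Rather than crudely estimating the term $\int_{\Sigma_r}\inner{x,\vec H}\,d\mu$ and integrating, one incorporates it into the quantity being monitored. One shows that
\[
\gamma(r) := r^{-2}|\Sigma_r| + \tfrac{1}{2}r^{-2}\int_{\Sigma_r}\inner{x,\vec H}\,d\mu + \tfrac{1}{16}\int_{\Sigma_r}|\vec H|^2\,d\mu
\]
is monotone nondecreasing in $r$. The proof of monotonicity hinges on the fact that, after combining with the coarea formula, the derivative of $\gamma$ is exactly $\int_{\partial\Sigma_r}|\nabla^\Sigma|x||^{-1}\left|\frac{x^\perp}{r^2}+\frac{\vec H}{4}\right|^2 d\mathcal{H}^1 \ge 0$ --- a completion of the square that your sketch does not perform. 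Once $\gamma(\sigma)\le\gamma(\rho)$ is in hand, the cross term $\tfrac12 r^{-2}\int_{\Sigma_r}\inner{x,\vec H}\,d\mu$ is estimated at both endpoints by $|x|\le r$ and Cauchy--Schwarz (yielding $\le\frac14 r^{-2}|\Sigma_r| + \frac14\int_{\Sigma_r}H^2$), and the $\frac14\phi(\sigma)$ piece is absorbed on the left. This gives the claimed estimate with an explicit absolute constant. You should replace the final integration-and-Gronwall step with this modified monotone quantity.
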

By Theorem $\ref{Preservedsmallnesstheorem1}$, the Willmore energy is
monotonically decreasing along the flow, and so Lemma \ref{SimonLemma}
with $\sigma = R$, $\rho = R\sqrt{2c}$ gives
\[
\frac{\mu_j(\Sigma_j(R))}{R^2}
 \le
   \frac{c\mu_j(\Sigma_j(R))}{2cR^2} 
 + c\int_{\Sigma_\rho} H^2\,d\mu\,.
\]
Absorbing yields
\begin{equation}
\frac{\mu_j(\Sigma_j(R))}{R^2}
  \leq c\oo{\varepsilon_{0}+4\pi\chi\oo{\Sigma}}.
\label{Blowup8}
\end{equation}
We now state the following compactness theorem.

\begin{theorem}[$\text{\cite[Theorem 4.2]{Kuwert1}}$]\label{BlowupTheorem1}
Let $f_{j}:\Sigma_{j}\rightarrow\mathbb{R}^{3}$ be a sequence of proper immersions, where each $\Sigma_{j}$ is a surface without boundary. For $R>0$ define
\[
\Sigma_{j}\oo{R}:=\left\{p\in\Sigma_{j}:\norm{f_{j}\oo{p}}<R\right\}=\Sigma_{j}\cap f_{j}^{-1}\oo{B_{R}}.
\]
Assume the bounds
\begin{equation}
\mu_{j}\oo{\Sigma_{j}\oo{R}}\leq c\oo{R}\text{  for any  }R>0,\text{  and }
\llll{\nabla_{\oo{k}}A}_{\infty} \leq c\oo{k}\text{  for any  }k\in\mathbb{N}_{0}
\label{BlowupTheorem1,1}
\end{equation}
hold. Then there is a proper immersion $\tilde{f}:\tilde{\Sigma}\rightarrow\mathbb{R}^{3}$ (where $\tilde{\Sigma}$ is also a $2-$manifold without boundary) such that after passing to a subsequence one has
\begin{equation}
f_{j}\circ\phi_{j}=\tilde{f}+u_{j}\text{  on  }\tilde{\Sigma}\oo{j}:=\tilde{\Sigma}\cap\tilde{f}^{-1}\oo{B_{j}},\label{BlowupTheorem1,2}
\end{equation}
satisfying the following properties:
\[
\begin{cases}
&\phi_{j}:\tilde{\Sigma}\oo{j}\rightarrow U_{j}\subset\Sigma_{j}\text{  is a diffeomorphism},\\
&\Sigma_{j}\oo{R}\subset U_{j}\text{  if  }j\geq j\oo{R},\\
&u_{j}\in C^{\infty}\oo{\tilde{\Sigma}\oo{j},\mathbb{R}^{3}}\text{  is normal along  }\tilde{f}\oo{\tilde{\Sigma}},\text{  and}\\
&\llll{\tilde{\nabla}_{\oo{k}}u_{j}}_{\infty,\Sigma_{j}}\to0\text{  as  }j\to\infty\text{  for any  }k\in\mathbb{N}_{0}.
\end{cases}
\]
\end{theorem}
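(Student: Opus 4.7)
The plan is to reduce the global convergence statement to standard local graph theory: the bound on $A$ forces each $f_j$ to be locally a normal graph over affine planes with uniform size, and the higher derivative bounds together with the area bound then permit a diagonal Arzel\`a--Ascoli extraction and a chart-by-chart assembly of the limit.

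First, I would fix $R>0$ and cover $\Sigma_j(R)$ by graph patches. From $\vn{A_j}_\infty\le c(0)$ and the inverse function theorem applied to $\pi_{j,p}\circ f_j$, where $\pi_{j,p}$ is orthogonal projection onto the affine tangent plane $T_{f_j(p)}(f_j\Sigma_j)$, each $p\in\Sigma_j$ admits a neighbourhood on which $f_j$ is a normal graph $x\mapsto x+w_{j,p}(x)\nu_{j,p}$ over a disk $D_{r_0}$, with $r_0>0$ depending only on $c(0)$. The bounds $\vn{\nabla_{(k)}A_j}_\infty\le c(k)$ translate via the Gauss--Weingarten equations into uniform $C^{k+2}$ bounds on each $w_{j,p}$, and the area bound $\mu_j(\Sigma_j(R))\le c(R)$ caps the number of such patches required to cover $\Sigma_j(R)$ independently of $j$.

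Second, I would extract subsequential limits. Along a diagonal subsequence indexed over both patch number $m$ and regularity order $k$, the basepoints, tangent planes, and graph functions $w_{j,p}$ all converge in $C^\infty_{\rm loc}$ to limit data $w_\infty^m$ over limit planes at limit points $y_\infty^m$. Declare $\tilde\Sigma$ to be the abstract $2$-manifold obtained by gluing the limit domains along the identifications dictated by the $C^\infty_{\rm loc}$-limits of the $f_j$-transition maps, and let $\tilde f$ be defined on each chart by the limit graph. Properness of $\tilde f$ follows because the area bound passes to the limit and prevents any non-proper accumulation.

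Third, the diffeomorphism $\phi_j$ and the normal deviation $u_j$ are assembled chart-by-chart. Over each limit chart one defines $\phi_j$ by matching parametrisations; on overlaps the competing definitions agree up to a $C^k$-error that tends to zero with $j$ (both sides converge to the same chart), so a partition-of-unity regularisation yields a global diffeomorphism $\phi_j:\tilde\Sigma(j)\to U_j\subset\Sigma_j$ containing $\Sigma_j(R)$ once $j$ is large compared to $R$. The difference $f_j\circ\phi_j-\tilde f$ can be straightened into a normal vector field $u_j$ along $\tilde f$ by composing with the nearest-point projection onto the normal bundle of $\tilde f(\tilde\Sigma)$, and the decay $\vn{\tilde\nabla_{(k)}u_j}_\infty\to 0$ follows directly from the $C^{k+2}$-convergence of the local graph functions. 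The main obstacle is precisely this gluing step: the topologies of the $\Sigma_j$ may differ drastically from that of $\tilde\Sigma$ (handles may escape to infinity or collapse), so one cannot hope for a global identification of $\Sigma_j$ with $\tilde\Sigma$ --- this is exactly why the statement formulates convergence through the exhaustion $\tilde\Sigma(j)$ and the open subsets $U_j\subset\Sigma_j$ rather than a single diffeomorphism.
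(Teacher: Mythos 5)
This theorem is cited in the paper from Kuwert and Sch\"atzle (\cite[Theorem~4.2]{Kuwert1}) and the paper does not reproduce a proof, so there is no in-paper argument to compare against. Your sketch does follow the standard Langer--Kuwert--Sch\"atzle route: a uniform graph radius from the $\vn{A}_\infty$ bound, $C^{k+2}$ bounds on graph functions from $\vn{\nabla_{(k)}A}_\infty$, an area bound to control the number of patches, diagonal extraction, gluing to form $(\tilde\Sigma,\tilde f)$, and chart-by-chart construction of $\phi_j$ and $u_j$. Two places are thinner than the actual argument in \cite{Kuwert1} and would need to be shored up. First, your properness claim (``the area bound passes to the limit and prevents any non-proper accumulation'') does not by itself rule out non-properness: bounded total area is compatible with non-proper immersions in general. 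What actually works is the graph decomposition itself --- each patch contributes area at least some $a_0>0$, so if $\tilde f^{-1}(\overline{B_R})$ were non-compact one could extract infinitely many disjoint patches inside a slightly larger ball, contradicting $\mu(\tilde\Sigma_{R'})\le c(R')$. Second, the gluing step is where most of the technical content sits: one must verify that the limiting transition maps actually define a Hausdorff, second-countable $2$-manifold (not just a topological space), that the resulting $\tilde f$ is an immersion, and that the competing definitions of $\phi_j$ on overlaps can be reconciled into a genuine diffeomorphism (injectivity, not just local diffeomorphism) for $j$ large. You acknowledge the obstacle but do not address the Hausdorff issue at all, and the partition-of-unity regularisation needs to be paired with an argument that it preserves injectivity. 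Finally, the stated decay $\vn{\tilde\nabla_{(k)}u_j}_{\infty,\tilde\Sigma(j)}\to 0$ is a uniform estimate on a growing exhaustion, not merely $C^\infty_{\mathrm{loc}}$ convergence; it follows from a careful diagonal choice but is not automatic from ``$C^{k+2}$-convergence of the local graph functions'' as stated. None of these are conceptual errors --- the skeleton is right --- but they are exactly the places where the cited proof spends its effort.
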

The theorem says that on any ball $B_{R},R>0$, for sufficiently large $j$ our
sequence of immersions can be written as a normal graph $\tilde{f}+u_{j}$ over
our limit immersion $\tilde{f}$ with small norm (after reparametrising with the
diffeomorphisms $\phi_{j}$).

Our work above, in particular \eqref{Blowup4} and \eqref{Blowup8} imply that we
may apply the compactness theorem above to the sequence $\{f_j\}$ in order to
extract a convergent subsequence asymptotic to a proper immersion
$\tilde{f}_{0}:\tilde{\Sigma}\rightarrow\mathbb{R}^{3}$.
We let $\phi_{j}:\tilde{\Sigma}\oo{j}\rightarrow U_{j}\subset\Sigma$ be a
sequence of diffeomorphisms as in $\oo{\ref{BlowupTheorem1,2}}$. Then the
reparametrisation
\begin{equation}
f_{j}\oo{\phi_{j},\cdot}:\tilde{\Sigma}\oo{j}\times\cc{0,C^{-1}}\rightarrow\mathbb{R}^{3}\label{Blowup6}
\end{equation}
satisfies \eqref{trilaplacianIntro1}. Also, by $\oo{\ref{BlowupTheorem1,2}}$,
the reparametrised flows $\left\{f_{j}\oo{\phi_{j},\cdot}\right\}$ have initial
data
\begin{equation}
f_{j}\oo{\phi_{j},0}=\tilde{f}_{0}+u_{j}:\tilde{\Sigma}\oo{j}\rightarrow\mathbb{R}^{3}\,,\label{Blowup7}
\end{equation}
converging locally in $C^\infty$ to the immersion
$\tilde{f}_{0}:\Sigma\rightarrow\mathbb{R}^{3}$. By converting our covariant
derivatives of curvature into partial derivatives of our immersion functions
$f_{j}$, we conclude from $\oo{\ref{Blowup4}}$ that 
\[
f_{j}\oo{\phi_{j},\cdot}\rightarrow\tilde{f}
\]
locally in $C^{\infty}$ where
$\tilde{f}:\tilde{\Sigma}\times\cc{0,\frac{1}{c}}\rightarrow\mathbb{R}^{3}$ satisfies \eqref{trilaplacianIntro1}
with initial data $f_{0}$. Hence the convergence is locally smooth.
We now prove four key properties of the blowup.

\begin{theorem}\label{BlowupTheorem2}
Let $f:\Sigma\times\left[0,T\right)\rightarrow\mathbb{R}^{3}$ satisfy \eqref{trilaplacianIntro1} and
$\oo{\ref{GLA1}}$. Then the blowup $\tilde{f}$ constructed above is stationary.
\end{theorem}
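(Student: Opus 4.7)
The plan is to combine the Lyapunov inequality of Theorem~\ref{Preservedsmallnesstheorem1}, which yields a finite total dissipation of $\norm{\nabla\Delta H}^2$ over $[0,T)$, with the fact that the integrand $\norm{\nabla\Delta H}^2 d\mu\,dt$ is scale-invariant under the parabolic rescaling defining $f_j$. A time window of length $O(1)$ in the rescaled picture corresponds to a window of length $r_j^6\to 0$ in the original flow, which forces the dissipation on the blowup to vanish; this will yield $\nabla\Delta\tilde H\equiv 0$ and hence $\Delta^2\tilde H\equiv 0$, i.e.\ $\partial_t\tilde f\equiv 0$.

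Concretely, I would first integrate the differential inequality of Theorem~\ref{Preservedsmallnesstheorem1} on $[0,T)$ to obtain
\[
\int_0^T\intM{\norm{\nabla\Delta H}^2}\,dt\;\leq\;2\intM{\norm{A^o}^2}\Big|_{t=0}\;\leq\;2\varepsilon_0<\infty,
\]
so that $t\mapsto\intM{\norm{\nabla\Delta H}^2}$ lies in $L^1([0,T))$. A direct dimensional check (using that $|A|$ scales like an inverse length and $\partial_t$ is a sixth-order operator, so that $g_j=r_j^{-2}g$, $d\mu_j=r_j^{-2}d\mu$, and $dt_{\mathrm{old}}=r_j^6 dt_{\mathrm{new}}$) confirms that $\norm{\nabla\Delta H}^2 d\mu\,dt$ is invariant under the rescaling, so that for any $a<b$ in the rescaled time domain
\[
\int_a^b\intM{\norm{\nabla\Delta H_j}^2}\,dt\;=\;\int_{t_j+r_j^6 a}^{t_j+r_j^6 b}\intM{\norm{\nabla\Delta H}^2}\,dt\;\longrightarrow\;0
\]
as $j\to\infty$, by absolute continuity of the Lebesgue integral together with $r_j^6(b-a)\to 0$.

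The locally smooth convergence $f_j\circ\phi_j\to\tilde f$ together with the derivative bounds $\oo{\ref{Blowup4}}$ implies that $\norm{\nabla\Delta H_j}^2 d\mu_j\to\norm{\nabla\Delta\tilde H}^2 d\tilde\mu$ locally smoothly on $\tilde\Sigma\times\cc{0,C^{-1}}$. Applying Fatou on any compact $K\subset\tilde\Sigma$ then gives $\int_a^b\int_K\norm{\nabla\Delta\tilde H}^2 d\tilde\mu\,dt=0$ for every $\cc{a,b}\subset\cc{0,C^{-1}}$, so smoothness of $\tilde f$ forces $\nabla\Delta\tilde H\equiv 0$ on the entire blowup. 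Consequently $\Delta\tilde H$ is constant on each time slice, $\Delta^2\tilde H\equiv 0$, and by \eqref{trilaplacianIntro1} we conclude $\partial_t\tilde f\equiv 0$, i.e.\ $\tilde f$ is stationary. The only real obstacle is bookkeeping: verifying the scale-invariance identity cleanly and promoting $C^\infty$ convergence of the reparametrised immersions to locally uniform convergence of the derived geometric scalar $\norm{\nabla\Delta H_j}^2 d\mu_j$, both of which are routine given Theorem~\ref{BlowupTheorem1} and $\oo{\ref{Blowup4}}$.
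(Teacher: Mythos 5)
Your proposal is correct and follows essentially the same route as the paper: both use the Lyapunov inequality of Theorem~\ref{Preservedsmallnesstheorem1} combined with scale invariance of the dissipation density to show that the dissipation over the rescaled time window $[0,C^{-1}]$ tends to zero as $j\to\infty$, and then pass to the limit using the locally smooth convergence $f_j\circ\phi_j\to\tilde f$. The only cosmetic difference is that the paper recognises the integrated dissipation as a telescoping difference $\intM{\norm{A}^2}\big|_{t=t_j}-\intM{\norm{A}^2}\big|_{t=t_j+r_j^6C^{-1}}$ of a monotone bounded quantity, whereas you invoke $L^1$ absolute continuity over the shrinking intervals $[t_j+r_j^6a,\,t_j+r_j^6b]$; these are equivalent ways of extracting the same limit.
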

\begin{proof}
By Theorem $\ref{Preservedsmallnesstheorem1}$ and the fact that each of the maps $\phi_{j}$ is a diffeomorphism, we have for $t\in\cc{0,\frac{1}{C}}$:
\begin{equation}
\int_{U_{j}}{\norm{\nabla\Delta H_{j}}^{2}\,d\mu_{j}}\Big|_{t=\tau}\leq-\frac{d}{dt}\int_{\Sigma}{\norm{A_{j}}^{2}\,d\mu_{j}}\Big|_{t=\tau}.\label{BlowupTheorem2,1}
\end{equation}
By scale invariance we have
\[
\int_{\Sigma}{\norm{A_{j}}^{2}\,d\mu_{j}}\Big|_{t=0}=\intM{\norm{A}^{2}}\Big|_{t=t_{j}}\mbox{, and }\int_{\Sigma}{\norm{A_{j}}^{2}\,d\mu_{j}}\Big|_{t=C^{-1}}=\intM{\norm{A}^{2}}\Big|_{t=t_{j}+r_j^6C^{-1}}\,.
\]
Thus integrating identity $\oo{\ref{BlowupTheorem2,1}}$ over the interval $\cc{0,\frac{1}{C}}$ then yields
\begin{equation*}
\int_{0}^{\frac{1}{C}}{\int_{\tilde{\Sigma}\oo{j}}{\norm{\nabla\Delta H_j}}^2\,d\mu_j}\,d\tau
\leq\intM{\norm{A}^{2}}\Big|_{t=t_{j}}-\intM{\norm{A}^{2}}\Big|_{t=t_{j}+r_j^6C^{-1}}\,,
\end{equation*}
so that taking $j\to\infty$ yields
\[
\lim_{j\to\infty}\int_{0}^{\frac{1}{C}}{\int_{\tilde{\Sigma}\oo{j}}{\norm{\nabla\Delta H_j} }^{2}\,d\mu_j\,d\tau}=0.
\]
Hence as $f_{j}\oo{\phi_{j},\cdot}\to\tilde{f}$ locally smoothly, we find that $\nabla\Delta H\equiv0$ with respect to the immersion $\tilde{f}$. This tells us immediately that $\tilde{f}$ satisfies
\[
\Delta^{2}H\equiv0\,.
\] 
\end{proof}

\begin{lemma}\label{BlowupLemma1}
The blowup $\tilde{f}$ constructed above is not a union of planes. 
\end{lemma}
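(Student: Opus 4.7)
The plan is to derive a contradiction from the curvature concentration that was built into the construction of the blowup. By \eqref{Blowup2} and the scale invariance of the $L^{2}$-norm of the second fundamental form, at rescaled time $t=0$ we have
\[
\int_{f_j^{-1}(\overline{B_1(0)})}\norm{A_j}^{2}\,d\mu_j\Big|_{t=0}
=\int_{f^{-1}(\overline{B_{r_j}(x_j)})}\norm{A}^{2}\,d\mu\Big|_{t=t_j}\geq\varepsilon_{1}>0
\]
for every $j$. The goal is to show that this lower bound survives the limit $j\to\infty$, while on any (union of) planes the second fundamental form, and hence the integrand, vanishes identically.

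To pass the bound to the limit I would reparametrise via the diffeomorphisms $\phi_j$ produced by Theorem \ref{BlowupTheorem1}. For $j\geq j(2)$ the closure $f_j^{-1}(\overline{B_1(0)})$ lies in $\Sigma_j(2)\subset U_j$, so pulling back by $\phi_j$ recasts the inequality above as an integral over a subset of $\tilde{\Sigma}(j)$ for the immersion $f_j\circ\phi_j=\tilde{f}+u_j$. Since $u_j\to 0$ locally in $C^{\infty}$, both the integrand (the squared norm of the second fundamental form of $\tilde{f}+u_j$) and the induced measure converge locally smoothly to their analogues for $\tilde{f}$, while the domain $(\tilde{f}+u_j)^{-1}(\overline{B_1(0)})$ converges in Hausdorff distance to $\tilde{f}^{-1}(\overline{B_1(0)})$. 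Passing to the limit yields
\[
\int_{\tilde{f}^{-1}(\overline{B_{1}(0)})}\lvert\tilde{A}\rvert^{2}\,d\tilde{\mu}\,\geq\,\varepsilon_{1}\,>\,0.
\]
If $\tilde{f}(\tilde{\Sigma})$ were a union of planes, then $\tilde{A}\equiv 0$ identically, forcing the left-hand side to vanish, which is a contradiction.

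The main technical obstacle is justifying convergence of an integral whose domain depends on $j$. This is handled by the properness of $\tilde{f}$, which together with the local $C^{\infty}$ convergence $f_j\circ\phi_j\to\tilde{f}$ delivers the required Hausdorff convergence of preimages, and by the fact that the contributions from a neighbourhood of $\partial B_{1}(0)$ are negligible for generic radii. If $\partial B_{1}(0)$ happens to meet $\tilde{f}(\tilde{\Sigma})$ non-transversally, one may replace $1$ by a slightly larger $\rho\in(1,2)$: for $j\geq j(2)$ the inclusion $f_j^{-1}(\overline{B_{\rho}(0)})\subset\Sigma_j(2)\subset U_j$ still holds, the lower bound $\varepsilon_{1}$ is preserved by monotonicity of the integral in $\rho$, and almost every $\rho\in(1,2)$ gives a transverse boundary so that the convergence goes through cleanly.
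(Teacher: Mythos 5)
Your proposal is correct and follows essentially the same route as the paper's (very terse) proof: use the scale-invariant lower bound \eqref{Blowup2} at rescaled time $t=0$, pass it to the limit via the local $C^\infty$ convergence $f_j\circ\phi_j\to\tilde f$ from Theorem \ref{BlowupTheorem1}, and conclude that $\tilde A\not\equiv 0$, whereas planes have vanishing second fundamental form. The paper simply asserts the limiting inequality, and your write-up supplies the (correct) technical justification — properness of $\tilde f$ for compactness of the preimage, monotonicity in the radius, and a slight enlargement of the ball to sidestep boundary issues.
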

\begin{proof}
By identity $\oo{\ref{Blowup2}}$ and construction of $\tilde{f}$, we have 
\[
\int_{\tilde{f}^{-1}\overline{\oo{B_{1}\oo{0} } }}{\norm{A}^{2}\,d\mu}\geq\varepsilon_{1}>0,
\]
This tells us that $\tilde{f}$ has a nonplanar component.
\end{proof}

\begin{lemma}\label{BlowupLemma2}
Let $\tilde{f}:\tilde{\Sigma}\rightarrow\mathbb{R}^{3}$ be the blowup
constructed above. If $\tilde{\Sigma}$ contains a compact component
$\mathcal{C}$, then $\tilde{\Sigma}=\mathcal{C}$ and $\Sigma$ is diffeomorphic
to $\mathcal{C}$.
\end{lemma}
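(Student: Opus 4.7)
The plan is to exploit the diffeomorphism structure provided by Theorem \ref{BlowupTheorem1} and the fact that the rescalings $f_j$ leave the underlying abstract manifold unchanged: each $\Sigma_j$ is canonically identified with the original compact connected $\Sigma$. From the compactness theorem, for each $j$ there is a diffeomorphism $\phi_j:\tilde{\Sigma}(j)\to U_j\subseteq\Sigma$, where $\tilde{\Sigma}(j)=\tilde{\Sigma}\cap\tilde{f}^{-1}(B_j)$.

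First I would fix a compact component $\mathcal{C}\subset\tilde{\Sigma}$. Since $\tilde{f}(\mathcal{C})$ is compact in $\mathbb{R}^3$, it lies in some ball $B_{j_0}(0)$, so for all $j\geq j_0$ one has $\mathcal{C}\subset\tilde{\Sigma}(j)$. Because $\mathcal{C}$ is a connected component of $\tilde{\Sigma}$, it is both open and closed in $\tilde{\Sigma}$, hence open and closed in $\tilde{\Sigma}(j)$. Applying the diffeomorphism $\phi_j$, the image $\phi_j(\mathcal{C})$ is then open in $U_j$, hence open in $\Sigma$; simultaneously it is compact, hence closed in the Hausdorff space $\Sigma$. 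By connectedness of $\Sigma$, this forces $\phi_j(\mathcal{C})=\Sigma$. The restriction $\phi_j|_{\mathcal{C}}:\mathcal{C}\to\Sigma$ is then a diffeomorphism, which establishes the second assertion.

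Next I would deduce the first assertion from this. The identity $\phi_j(\mathcal{C})=\Sigma$ together with $\phi_j(\tilde{\Sigma}(j))=U_j\subseteq\Sigma$ forces $U_j=\Sigma=\phi_j(\mathcal{C})=\phi_j(\tilde{\Sigma}(j))$. Injectivity of $\phi_j$ then gives $\mathcal{C}=\tilde{\Sigma}(j)$ for every $j\geq j_0$. Since the sets $\tilde{\Sigma}(j)$ are nested and exhaust $\tilde{\Sigma}$ as $j\to\infty$, we conclude $\tilde{\Sigma}=\mathcal{C}$.

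The argument is essentially topological once the compactness theorem is in hand, so there is no serious analytic obstacle here. The only point requiring care is the bookkeeping: verifying that $\mathcal{C}$, being a connected component, is open in $\tilde{\Sigma}(j)$, and that the image under $\phi_j$ remains open and closed in $\Sigma$ (rather than merely in $U_j$), so that the connectedness of $\Sigma$ can be invoked to conclude $\phi_j(\mathcal{C})=\Sigma$.
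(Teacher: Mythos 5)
Your proof is correct and follows essentially the same route as the paper's: pick $j$ large enough that $\mathcal{C}\subset\tilde{\Sigma}(j)$, observe that $\phi_j(\mathcal{C})$ is clopen in the connected $\Sigma$ and hence equals $\Sigma$, conclude the diffeomorphism and then exhaust $\tilde{\Sigma}$ by the $\tilde{\Sigma}(j)$ to get $\mathcal{C}=\tilde{\Sigma}$. You simply spell out the bookkeeping (openness of $\phi_j(\mathcal{C})$ in $\Sigma$ rather than merely $U_j$, and $\mathcal{C}=\tilde{\Sigma}(j)$ via injectivity of $\phi_j$) that the paper leaves implicit.
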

\begin{proof}
Let $\mathcal{C}$ be the aforementioned compact component of $\tilde{\Sigma}$.
Then for sufficiently large $j$, $\phi_{j}\oo{\mathcal{C}}$ is both closed and
open (clopen) in $\Sigma$. Since $\Sigma$ is connected and
$\mathcal{C}\neq\emptyset$, we conclude that $\Sigma=\phi_{j}\oo{\mathcal{C}}$.
Hence $\Sigma$ is diffeomorphic to $\mathcal{C}$. Taking $j\nearrow\infty$ then
allows us to conclude that
\[
\mathcal{C}=\lim_{j\to\infty}\phi_{j}^{-1}\oo{\Sigma}=\lim_{j\to\infty}\tilde{\Sigma}\oo{j}=\tilde{\Sigma}\,.
\]
\end{proof}
\begin{theorem}\label{BlowupTheorem3}
Let $f:\Sigma\rightarrow\mathbb{R}^{3}$ satisfy \eqref{trilaplacianIntro1} and
$\oo{\ref{Initialsmallness}}$. Let $\tilde{f}$ be the blowup constructed above.
Then none of the components of $\tilde{f}(\tilde{\Sigma})$ are compact, and the
blowup has a component which is nonumbilic and satisfies $\Delta^{2}H\equiv0$.
\end{theorem}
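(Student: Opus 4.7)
The plan is to deduce both assertions by contradiction, combining the three already-established properties of $\tilde{f}$: Theorem~\ref{BlowupTheorem2} ($\Delta^2 H \equiv 0$), Lemma~\ref{BlowupLemma1} (existence of a nonplanar component), and Lemma~\ref{BlowupLemma2} (any compact component exhausts $\tilde{\Sigma}$ and is diffeomorphic to $\Sigma$). The preliminary step is to transfer the smallness condition \eqref{Initialsmallness} to the blowup: the functional $\int|A^o|^2\,d\mu$ is scale-invariant under the rescalings defining $\{f_j\}$, is non-increasing along the original flow by Theorem~\ref{Preservedsmallnesstheorem1}, and is lower semicontinuous under the locally smooth subconvergence $f_j\circ\phi_j\to\tilde{f}$ via Fatou's lemma on compact pieces. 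Chaining these yields
\[
\int_{\tilde{\Sigma}}|A^o|^2\,d\tilde{\mu}
\leq \liminf_{j\to\infty}\int_{\Sigma}|A^o|^2\,d\mu\Big|_{t=t_j}
\leq \int_{\Sigma}|A^o|^2\,d\mu\Big|_{t=0}
\leq \varepsilon_0 < 8\pi,
\]
and the same bound holds for the restriction of $\tilde{f}$ to any individual component. This is precisely the hypothesis required to invoke the Gap Lemma~\ref{GapTheorem1}.

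For the first claim, suppose for contradiction that $\tilde{\Sigma}$ contains a compact component $\mathcal{C}$. Lemma~\ref{BlowupLemma2} then forces $\tilde{\Sigma}=\mathcal{C}$ and $\Sigma\cong\mathcal{C}$, so $\tilde{f}$ is closed, and the closed case of the Gap Lemma identifies $\tilde{f}(\tilde{\Sigma})$ with a round sphere $\mathbb{S}^2_\rho(x)$ of finite radius. Unwinding the rescaling $f_j=(f-x_j)/r_j$, for $j$ large the surface $\Sigma_{t_j}$ sits inside a ball of radius $O(r_j)$ about $x_j$, so $\mathrm{diam}\,\Sigma_{t_j}\to 0$. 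Yet the flow preserves the signed enclosed volume by \eqref{EQvolevo}, and the initial smallness condition \eqref{Initialsmallness} places the initial immersion $L^2$-close to a round sphere, so $V_0\neq 0$; a closed surface of vanishing diameter cannot enclose nonzero volume, giving the desired contradiction.

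For the second claim, Theorem~\ref{BlowupTheorem2} already gives $\Delta^2 H\equiv 0$ on every component of $\tilde{f}$, so it suffices to produce one nonumbilic component. Suppose for contradiction that every component is umbilic, i.e.\ $A^o\equiv 0$ on all of $\tilde{\Sigma}$. The classical rigidity for totally umbilic surfaces in $\mathbb{R}^3$ forces each component to be contained in a plane or a round sphere; the first claim just established excludes the spherical alternative, so every component of $\tilde{f}(\tilde{\Sigma})$ must be a plane -- directly contradicting Lemma~\ref{BlowupLemma1}. Hence at least one component of $\tilde{\Sigma}$ is nonumbilic. I expect the main obstacle to be the volume argument in the first claim: one must carefully justify that the $L^2$-closeness to a sphere encoded by \eqref{Initialsmallness} genuinely forces the nondegeneracy $V_0\neq 0$, since without this the isoperimetric obstruction collapses and the compact spherical case cannot be ruled out.
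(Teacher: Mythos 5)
Your argument is correct and follows essentially the same strategy as the paper's: Lemma~\ref{BlowupLemma2} collapses any compact component to all of $\tilde{\Sigma}$, the rescaling relation $g|_{t=t_j}=r_j^2\,g_j|_{t=0}$ sends the geometry of $\Sigma_{t_j}$ to zero while volume is preserved, and Lemma~\ref{BlowupLemma1} plus the classification of umbilics deliver the second claim. Two remarks on the differences. First, you invoke the Gap Lemma to identify the compact blowup as a round sphere of finite radius before unwinding the rescaling. This is valid --- your Fatou/scale-invariance/monotonicity chain correctly transfers the smallness hypothesis $\int|A^o|^2 < 8\pi$ to the blowup, a step the paper glosses over and first needs explicitly only in Proposition~\ref{BlowupProposition1} --- but it is heavier machinery than required here: once $\tilde{\Sigma}$ is compact, $\tilde{f}(\tilde{\Sigma})$ is automatically a bounded subset of $\R^3$, which already gives $\mathrm{diam}\,\Sigma_{t_j}\lesssim r_j\to 0$ without knowing the blowup is a sphere. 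The paper instead contradicts the isoperimetric lower bound $\mu(\Sigma_t)\ge\sqrt[3]{36\pi\,\mathrm{Vol}(\Sigma_0)}$ by showing $\mu(\Sigma_{t_j})=r_j^2\mu_j(\Sigma)\to 0$; your vanishing-diameter contradiction with nonzero preserved volume is interchangeable with this. Second, the concern you flag at the end about the nondegeneracy $V_0\ne 0$ is legitimate to raise but is already resolved: the smallness hypothesis together with Gauss--Bonnet forces $\chi(\Sigma)=2$ and keeps the Willmore energy below $16\pi$, so Theorem~\ref{LiYauTheorem1} gives embeddedness (this is exactly the final step in the proof of Theorem~\ref{Preservedsmallnesstheorem1}), whence the signed enclosed volume is the genuine volume of the bounded component of the complement and is positive. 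Both your proof and the paper's rely on this tacitly.
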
	
\begin{proof}
We claim that the surface area $\mu\oo{\Sigma_t}$ is uniformly bounded
away from zero. To see this, note that by Theorem $\ref{Preservedsmallnesstheorem1}$
each $f_{t}$ is an embedding for $t\in\left[0,T\right)$, and \eqref{EQvolevo}
implies that the enclosed volume of the flow is constant. Combining this with
the isoperimetric inequality we conclude that for $t$ in this time interval,
\[
\mu\oo{\Sigma_{t}}\geq\sqrt[3]{36\pi\text{Vol}\oo{\Sigma_{0} }}>0.
\]
Next assume (for the sake of contradiction) that $\tilde{f}(\tilde{\Sigma})$
has a compact component, say $\mathcal{D}$. The properness of $\tilde{f}$
implies that $\tilde{f}^{-1}\oo{\mathcal{D}}\subseteq\tilde{\Sigma}$ is also
compact. We infer from Lemma $\ref{BlowupLemma1}$ that
$\mathcal{D}=\tilde{f}(\tilde{\Sigma})$, so that $\tilde{f}(\tilde{\Sigma})$
must consist of a single compact component. Hence
\begin{equation}
|\tilde{f}(\tilde{\Sigma})|=\lim_{j\to\infty}\mu_{j}\oo{\Sigma_t}<\infty.\label{BlowupTheorem3,1}
\end{equation}
We next use the definition of the sequence of immersions $\left\{f_{j}\right\}$ to compute the measure of the blowup.
Firstly, a quick computation gives
\[
g\Big|_{t=t_{j}}=r_{j}^{2}g_{j}\Big|_{t=0}
\]
where $g_{j}$ denotes the metric induced by the immersion $f_{j}$. The measure of the blowup can then be calculated from the formula
\begin{equation*}
\mu\oo{\Sigma_t}\Big|_{t=t_{j}}=\int_{\Sigma}{\,d\mu}\Big|_{t=t_{j}}=r_{j}^{2}\int_{\Sigma}{\,d\mu_{j}}\Big|_{t=0}=r_{j}^{2}\mu_{j}\oo{\Sigma_t}\Big|_{t=0}.
\end{equation*}
Thus by our choice of $r_{j}$, we know that
\begin{equation}
\mu\oo{\Sigma_t}\Big|_{t=T}=\lim_{j\to\infty}r_{j}^{2}\mu_{j}\oo{\Sigma_t}\Big|_{t=0}=0.\label{BlowupTheorem3,2}
\end{equation}
This of course contradicts our statement above regarding the enclosed area
being strictly positive. Thus we conclude that $\tilde{f}(\tilde{\Sigma})$ has
no compact components.
Recall that the locally smooth convergence rules out non-smooth combinations of
pieces of spheres and planes.
Now Lemma $\ref{BlowupLemma1}$ tells us that there must be a component of
$\tilde{f}(\tilde{\Sigma})$ with nonzero curvature, ruling out flat planes; the
only umbilics with unbounded area.
This tells us that the component above is noncompact and nonumbillic, which is
what we wished to show. 
\end{proof}
\end{section}

\begin{section}{Global analysis of the flow}
We begin by proving that a flow \eqref{trilaplacianIntro1} with initial data satisfying \eqref{Initialsmallness} exists for all time.
\begin{proposition}\label{BlowupProposition1}
Suppose $f:\Sigma\times\left[0,T\right)\rightarrow\mathbb{R}^{3}$ satisfies \eqref{trilaplacianIntro1} and \eqref{Initialsmallness}.
Then $T=\infty$.
\end{proposition}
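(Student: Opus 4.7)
The plan is to argue by contradiction: suppose the maximal time $T<\infty$. Then by the Lifespan Theorem \ref{LifespanTheorem} the curvature must concentrate as $t\nearrow T$, i.e.\ for every $\rho>0$ there exists $t<T$ and $x\in\mathbb{R}^3$ with
\[
\int_{f^{-1}(B_\rho(x))}|A|^2\,d\mu\Big|_t>\varepsilon_1,
\]
where $\varepsilon_1=\varepsilon_0/C$ is the constant from Section 9. This puts us exactly in the hypothesis of the blowup construction of that section, so we may select $r_j\searrow0$, $t_j\nearrow T$, $x_j\in\mathbb{R}^3$ and form the parabolically rescaled flows $f_j$.

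Next I would collect the properties of the resulting blowup
$\tilde f:\tilde\Sigma\rightarrow\mathbb{R}^3$ already proven in Section 9: the
compactness theorem \ref{BlowupTheorem1} (applicable thanks to the interior
estimates \eqref{Blowup4} and the area bound \eqref{Blowup8}) furnishes
$\tilde f$ as a proper $C^\infty$ immersion; Theorem \ref{BlowupTheorem2} gives
$\Delta^2 H\equiv0$ on $\tilde f$; and Theorem \ref{BlowupTheorem3} guarantees
that no component of $\tilde f(\tilde\Sigma)$ is compact while at least one
component is non-umbilic. I would then verify the total tracefree curvature
smallness hypothesis of the Gap Lemma on the blowup: since $\int|A^o|^2d\mu$ is
invariant under the parabolic rescaling and, by Theorem
\ref{Preservedsmallnesstheorem1}, non-increasing along the flow,
\[
\int_{\tilde\Sigma(R)}|A^o|^2\,d\tilde\mu
 \le\liminf_{j\to\infty}\int_{\Sigma}|A^o_j|^2\,d\mu_j
 \le\intM{\norm{A^o}^2}\Big|_{t=0}\le\varepsilon_0<8\pi
\]
for every $R>0$, by local smooth convergence and lower semicontinuity; letting $R\to\infty$ yields the required global bound.

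With stationarity, properness, $C^6_{\mathrm{loc}}$ regularity, and smallness
of $\vn{A^o}_2^2$ in hand, I would invoke the Gap Lemma \ref{GapTheorem1}
componentwise on $\tilde f$ (using the remark permitting mixed compact/noncompact
components) to conclude that every component of $\tilde f(\tilde\Sigma)$ is a
round sphere or a flat plane. Theorem \ref{BlowupTheorem3} rules out all
compact components, so every component must be a flat plane, directly
contradicting Lemma \ref{BlowupLemma1} (equivalently, contradicting the
non-umbilic component guaranteed by Theorem \ref{BlowupTheorem3}). This
contradiction forces $T=\infty$. The only real bookkeeping is choosing the
absolute constant $\varepsilon_0$ at the outset small enough to simultaneously
trigger Theorem \ref{Preservedsmallnesstheorem1} and the Gap Lemma on the
blowup; everything else is a direct assembly of results already established in
earlier sections.
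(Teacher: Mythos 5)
Your proof is correct and follows essentially the same route as the paper: contradict $T<\infty$ via the Lifespan Theorem, construct the blowup of Section 9, invoke its stationarity and noncompact/nonumbilic structure (Theorems \ref{BlowupTheorem2}, \ref{BlowupTheorem3}, Lemma \ref{BlowupLemma1}), and apply the Gap Lemma \ref{GapTheorem1} to reach a contradiction. The one worthwhile addition you make is the explicit verification of the smallness hypothesis \eqref{GLA1} on the blowup via scale invariance of $\vn{A^o}_2^2$, the monotonicity from Theorem \ref{Preservedsmallnesstheorem1}, and lower semicontinuity under local smooth convergence; the paper's proof asserts this but does not spell it out.
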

\begin{proof}
Suppose that $T<\infty$.
Then Theorem $\ref{LifespanTheorem}$ implies that curvature must
at time $T$.
Next, by Theorem $\ref{Preservedsmallnesstheorem1}$ we know that $A^{o}$
remains small in $L^{2}$ on the time interval $\left[0,T\right)$.
We construct a blowup $\tilde{f}$ as in Section 9.
We proved that the blowup $\tilde{f}$ satisfies the hypothesis of Theorem
$\ref{GapTheorem1}$, and so we conclude that $\tilde{f}(\tilde{\Sigma})$ is
either a plane or a sphere.
But Theorem $\ref{BlowupTheorem3}$ tells us that $\tilde{f}(\tilde{\Sigma})$ is
noncompact with a nonumbilic component, and so we have reached a contradiction.
We conclude that $T=\infty$.
\end{proof}

Let us now identify a subsequence of times along which the flow is asymptotic to a round sphere.

\begin{lemma}\label{ConvergenceLemma1}
Let $f:\Sigma\times\left[0,T\right)\rightarrow\mathbb{R}^{3}$ satisfy \eqref{trilaplacianIntro1}
and $\oo{\ref{Initialsmallness}}$.
Then for any sequence $t_{j}\nearrow\infty$ we may choose
$x_{j}\in\mathbb{R}^{3},\phi_{j}\in\text{Diff}\oo{\Sigma,\mathbb{R}^{3}}$ such
that after passing to a subsequence, the immersions $f\oo{\phi_{j},t}-x_{j}$
converge in $C^\infty$ to an embedded round sphere.
\end{lemma}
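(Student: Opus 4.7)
The plan is to combine monotonicity of $\vn{A^o}_2^2$ with the blowup/compactness machinery of Section 9 and the Gap Lemma to extract, from the given times $t_j$, a smooth subsequential limit that must be an embedded round sphere. First, integrating the monotonicity of Theorem \ref{Preservedsmallnesstheorem1} over $[0,\infty)$ gives $\int_0^\infty \vn{\nabla\Delta H}_2^2 \, dt \le 2\vn{A^o}_2^2|_{t=0} < \infty$. Since $V_0>0$ is preserved by \eqref{EQvolevo}, the isoperimetric inequality provides a uniform positive lower bound on $|\Sigma_t|$, while \eqref{EQareaevo} gives the uniform upper bound $|\Sigma_t| \le |\Sigma_0|$; Gauss--Bonnet together with Theorem \ref{Preservedsmallnesstheorem1} additionally yields a uniform bound on $\vn{A}_2^2$ on $[0,\infty)$.

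The key preparatory step is to establish uniform non-concentration of curvature: there exists $\rho_0 > 0$ so that $\sup_{x \in \R^3,\, t \ge 0} \int_{f^{-1}(B_{\rho_0}(x))} |A|^2 \, d\mu \le \varepsilon_1$, where $\varepsilon_1 = \varepsilon_0/C$ is the constant from Theorem \ref{LifespanTheorem}. Suppose this fails; then there are $r_n \searrow 0$, points $x_n$, and times $t_n$ with concentration exceeding $\varepsilon_1$, and smoothness of each $f\oo{\cdot, s}$ at finite times forces $t_n \to \infty$ along a subsequence. Running the construction of Section 9 on $(r_n, x_n, t_n)$ yields a stationary (Theorem \ref{BlowupTheorem2}), noncompact, nonumbilic (Theorem \ref{BlowupTheorem3} and Lemma \ref{BlowupLemma1}) limit immersion with small $\vn{A^o}_2^2$ satisfying $\Delta^2 H \equiv 0$. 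This contradicts the Gap Lemma \ref{GapTheorem1}, which forces such a limit to be a flat plane, hence umbilic. With uniform non-concentration in hand, Theorem \ref{InteriorEstimatesTheorem1} applied on time intervals of length $c\rho_0^6$ gives $\vn{\nabla_{(k)}A}_\infty \le c_k$ uniformly on $\Sigma$ for all $t \ge c\rho_0^6$ and all $k \ge 0$.

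Finally, given $t_j \nearrow \infty$, pick a fixed $p_0 \in \Sigma$ and set $x_j := f(p_0, t_j)$. Consider the translated reparametrised flows $f_j(p, \tau) := f(p, t_j + \tau) - x_j$ on $\Sigma \times [-1,1]$. These inherit uniform bounds on all space derivatives (and, via the flow equation, on time derivatives), and Simon's monotonicity formula combined with the Willmore bound gives uniform $\mu_j(\Sigma_j(R)) \le c(R)$ for each $R>0$. Applying Theorem \ref{BlowupTheorem1} to $f_j(\cdot,0)$ yields, after extracting a subsequence, diffeomorphisms $\phi_j$ and an immersion $\tilde f_0$ with $f_j(\phi_j, 0) \to \tilde f_0$ locally smoothly; the derivative bounds upgrade this to local smooth convergence of the full reparametrised flows $f_j(\phi_j,\tau) \to \tilde f(\cdot,\tau)$ solving \eqref{trilaplacianIntro1}. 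The tail bound $\int_{t_j-1}^{t_j+1}\vn{\nabla\Delta H}_2^2 \, dt \to 0$ forces $\nabla\Delta H \equiv 0$ on the limit flow, hence $\Delta^2 H \equiv 0$ on $\tilde f_0$; moreover $\vn{A^o(\tilde f_0)}_2^2 \le \lim_j \vn{A^o}_2^2(t_j) < \varepsilon_0$. The Gap Lemma then identifies $\tilde f_0$ as a plane or sphere, and the positive lower bound on $|\Sigma_{t_j}|$ excludes the plane, so $\tilde f_0$ is an embedded round sphere. The main obstacle I anticipate is the uniform non-concentration step, which requires rerunning the Section 9 blowup construction in the asymptotic-time setting (with $T=\infty$) and verifying that Theorems \ref{BlowupTheorem2} and \ref{BlowupTheorem3} still produce the noncompact, nonumbilic, stationary limit needed for the Gap-Lemma contradiction.
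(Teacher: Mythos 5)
Your proposal follows essentially the same route as the paper: translate to $x_j = f(p_0,t_j)$, establish uniform curvature and area bounds, apply the compactness Theorem \ref{BlowupTheorem1} to extract a subsequential limit immersion, upgrade to local smooth convergence of the translated flows, use the monotonicity of $\vn{A^o}_2^2$ to conclude $\Delta^2 H\equiv 0$ on the limit, and finally invoke the Gap Lemma (Theorem \ref{GapTheorem1}). The one place where you are more careful than the paper is the intermediate step of uniform-in-time non-concentration: a single $\rho_0>0$ such that $\sup_{x\in\R^3,\,t\ge 0}\int_{f^{-1}(B_{\rho_0}(x))}|A|^2\,d\mu\le\varepsilon_1$. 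The paper simply states the conclusion of Theorem \ref{InteriorEstimatesTheorem1} ``for every $t_j$'' with a $j$-independent constant $c(k)$, which implicitly presupposes exactly such a uniform $\rho_0$; Proposition \ref{BlowupProposition1} only establishes $T=\infty$ and does not by itself rule out concentration at $t=\infty$. Your proposed fix --- rerun the Section 9 blowup construction in the asymptotic-time setting and derive a contradiction with the Gap Lemma as in Proposition \ref{BlowupProposition1} --- is the natural continuation of the paper's own method and does close the gap; Theorems \ref{BlowupTheorem2} and \ref{BlowupTheorem3} are all phrased and proved in a way that remains valid with $T=\infty$ (the monotonicity formula, the rescaling, and the area argument via the isoperimetric inequality are unaffected).

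One small correction: at the end you write that ``the positive lower bound on $|\Sigma_{t_j}|$ excludes the plane.'' It is the \emph{upper} bound $|\Sigma_{t_j}|\le|\Sigma_0|$ that excludes the plane, since a flat plane has infinite area while the limit surface inherits area at most $|\Sigma_0|$ on every ball from the compactness theorem. The positive lower bound on area (via the fixed enclosed volume and the isoperimetric inequality) is what rules out a degenerate limit --- though in fact nondegeneracy of $\tilde\Sigma$ is already guaranteed because $0 = f_j(p_0,0)\in f_j(\Sigma)$ for every $j$, so $0\in\tilde f(\tilde\Sigma)$. This is precisely the dichotomy the paper invokes when it asserts that $\tilde f$ is compact before applying the Gap Lemma.
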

\begin{proof}
Proposition $\ref{BlowupProposition1}$ implies that $T=\infty$.
Let $p\in\Sigma$ and set $x_{j}=f\oo{p,t_{j}}$.
By Theorem $\ref{InteriorEstimatesTheorem1}$, we conclude that for every $t_{j}$ we have
\[
\llll{\nabla_{\oo{k}}A}_{\infty}\Big|_{t=t_{j}}\leq c\oo{k}.
\]
By \eqref{EQareaevo}, we know that if $\mu\oo{\Sigma_{0}}$ it remains so, that is:
\begin{equation}
\mu\oo{\Sigma}\Big|_{t=t_{j}}\leq \mu\oo{\Sigma_{0}}<\infty\,.
\label{ConvergenceLemma1,2}
\end{equation}
In particular, if we consider the sequence of immersions $f_{j}:\Sigma\to\mathbb{R}^{3}$ given by $f_{j}\oo{p,t}=f\oo{p,t_{j}}-x_{j}$ then Theorem $\ref{BlowupTheorem1}$ guarantees the existence of a proper immersion $\tilde{f}:\tilde{\Sigma}\rightarrow\mathbb{R}^{3}$ (where, of course, $\tilde{\Sigma}$ is a surface without boundary) and a sequence $\phi_{j}\in\text{Diff}\oo{\Sigma,\mathbb{R}^{3}}$ such that
\begin{equation}
f_{j}\oo{\phi_{j},t}=f\oo{\phi_{j},t_{j}}-x_{j}\rightarrow\tilde{f}\text{  as  }j\nearrow\infty.\label{ConvergenceLemma1,3}
\end{equation}
Here the convergence is in the smooth topology.
We define a new sequence of flows
$h_{j}:\tilde{\Sigma}\oo{j}\times\left[-t_{j},\infty\right)\rightarrow\mathbb{R}^{3}$
defined by
\[
h_{j}\oo{p,t}=f\oo{\phi_{j}\oo{p},t_{j}+t}-x_{j}.
\]
Then each $h_{j}$ also satisfies the interior estimates and bounded area hypothesis of Theorem $\ref{InteriorEstimatesTheorem1}$, and from $\oo{\ref{ConvergenceLemma1,3}}$ we conclude that
\[
h_{j}\oo{p,0}=f\oo{\phi_{j}\oo{p},t_{j}}-x_{j}\rightarrow\tilde{f}\text{  as  }j\nearrow\infty.
\]
That is to say, at initial time our sequence $h_{j}$ converges locally in $C^{\infty}$ to $\tilde{f}$.
Following the same line of argument as in the proof of Theorem $\ref{BlowupTheorem2}$ we conclude that 
\begin{equation*}
\int_{t_{j}}^{t_{j+1}}{\intM{\norm{\nabla\Delta H}^{2}}\,d\tau}\leq\intM{\norm{A}^{2}}\Big|_{t=t_{j}}-\intM{\norm{A}^{2}}\Big|_{t=t_{j+1}}\searrow0\text{  as  }j\nearrow\infty,
\end{equation*}
which tells us that $\tilde{f}$ satisfies $\Delta^2H \equiv 0$.
Noting that $\tilde{f}$ here is compact, Theorem $\ref{GapTheorem1}$ then
implies that $\tilde{f}$ must be a sphere.
\end{proof}

We now use a standard argument to obtain exponentially fast convergence of the family of immersions $f$ in the $C^\infty$ topology.

\begin{proposition}\label{ConvergenceProposition1}
Suppose $f:\Sigma\times\left[0,T\right)\rightarrow\mathbb{R}^{3}$ satisfies
\eqref{trilaplacianIntro1} and $\oo{\ref{Initialsmallness}}$.
Then for all $t$ sufficiently large
\[
\llll{\nabla_{\oo{k}}A}_{\infty}\leq c_{k}e^{-\xi t}\text{  and  }\llll{A^{o}}_{\infty}\leq c_{0}e^{-\xi t},
\] 
where $k\in\mathbb{N}$.
\end{proposition}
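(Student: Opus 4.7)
The plan is to first establish $\vn{A^{o}}_{2}\to 0$, then upgrade to exponential decay of $\vn{A^{o}}_{2}^{2}$ via a Poincar\'e-type inequality, and finally bootstrap to pointwise exponential decay of all derivatives of $A$. For the initial step, Lemma \ref{ConvergenceLemma1} supplies a sequence of times along which $\Sigma_{t_{j}}$ subconverges smoothly (up to translation and reparametrisation) to a round sphere; combined with the monotonicity of $\vn{A^{o}}_{2}^{2}$ from Theorem \ref{Preservedsmallnesstheorem1} and its vanishing on the subsequential limit, this forces $\vn{A^{o}}_{2}^{2}\to 0$ along the entire family. Applying Theorem \ref{Tracefreetheorem1OLD} with the trivial cutoff $\gamma\equiv 1$ (so $\cgam=0$, valid since $\Sigma_{t}$ is closed), together with uniform boundedness of $\vn{\nabla\Delta H}_{2}^{2}$ coming from the interior estimates of Theorem \ref{InteriorEstimatesTheorem1} and the area-monotonicity \eqref{EQareaevo}, further yields $\llll{A^{o}}_{\infty}\to 0$. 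In particular $\Sigma_{t}$ becomes and remains $C^{2}$-close to a fixed round sphere for all sufficiently large $t$.

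The central technical step is the Poincar\'e-type inequality
\[
\intM{\norm{A^{o}}^{2}}\le\beta^{-1}\intM{\norm{\nabla\Delta H}^{2}}
\]
for a uniform constant $\beta>0$ once $\Sigma_{t}$ is close to a sphere. Testing Simons' identity \eqref{TracefreeLemma1,1} against $A^{o}$, integrating by parts using \eqref{EQabove10}, and rearranging, gives
\[
\intM{\norm{\nabla A^{o}}^{2}}=\tfrac{1}{2}\intM{\norm{\nabla H}^{2}}-\tfrac{1}{2}\intM{H^{2}\norm{A^{o}}^{2}}+\intM{\norm{A^{o}}^{4}}.
\]
Invoking the Codazzi bound \eqref{Prelimiaries2} and absorbing the quartic term via smallness of $\llll{A^{o}}_{\infty}$ yields both $\intM{\norm{\nabla A^{o}}^{2}}\le C\intM{\norm{\nabla H}^{2}}$ and, using the lower bound $H^{2}\ge c_{1}>0$ on surfaces close to a sphere, $\intM{\norm{A^{o}}^{2}}\le C\intM{\norm{\nabla A^{o}}^{2}}$. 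Two applications of the scalar Poincar\'e inequality, to the zero-mean functions $H-\bar H$ and $\Delta H$ on $\Sigma_{t}$, then produce $\intM{\norm{\nabla H}^{2}}\le\lambda_{1}(\Sigma_{t})^{-2}\intM{\norm{\nabla\Delta H}^{2}}$, with $\lambda_{1}(\Sigma_{t})\ge c_{2}>0$ uniformly by spectral continuity from the limit sphere. Chaining these three sub-estimates and substituting into the monotonicity estimate from Theorem \ref{Preservedsmallnesstheorem1}, Gronwall's inequality delivers $\vn{A^{o}}_{2}^{2}\le C_{0}e^{-\beta t/2}$.

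For the pointwise decay, a further application of Theorem \ref{Tracefreetheorem1OLD} with $\gamma\equiv 1$ gives $\llll{A^{o}}_{\infty}^{6}\le c\vn{A^{o}}_{2}^{4}\vn{\nabla\Delta H}_{2}^{2}$, which combined with the exponential decay of $\vn{A^{o}}_{2}^{2}$ and the uniform bound on $\vn{\nabla\Delta H}_{2}^{2}$ gives $\llll{A^{o}}_{\infty}\le c_{0}e^{-\xi t}$ for $\xi>0$. For $\llll{\nabla_{\oo{k}}A}_{\infty}$ with $k\ge 1$, the bound \eqref{Prelimiaries2} reduces the task to controlling $\llll{\nabla_{\oo{k}}A^{o}}_{\infty}$, and Theorem \ref{EvolutionTheorem1} applied to $u=\nabla_{\oo{k}}A^{o}$ interpolates between the exponentially decaying $\vn{\nabla_{\oo{k}}A^{o}}_{2}$ (inherited inductively from $\vn{A^{o}}_{2}$ through Lemma \ref{EvolutionLemma3} and the uniform higher-derivative bounds from Theorem \ref{InteriorEstimatesTheorem1}) and the uniformly bounded $\vn{\nabla_{\oo{k+1}}A^{o}}_{2}+\vn{H\nabla_{\oo{k}}A^{o}}_{2}$, yielding exponential decay at each derivative level with a possibly degraded rate $\xi_{k}>0$. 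The hardest point will be the uniform lower bound $\lambda_{1}(\Sigma_{t})\ge c_{2}>0$ in the Poincar\'e step: this rests on uniform (not merely subsequential) $C^{2}$-closeness of $\Sigma_{t}$ to a single limit sphere, which is obtained from the bootstrap in the first paragraph together with the classical continuity of the Laplace-Beltrami spectrum under smooth perturbations of the surface.
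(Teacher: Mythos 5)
Your approach is genuinely different from the paper's, which linearises the radial graph formulation of \eqref{trilaplacianIntro1} around the stationary sphere of radius $\rho_{\infty}$, computes the spectrum of $\mathcal{L}\eta=\rho_{\infty}^{-6}(\sDelta^{3}\eta+2\sDelta^{2}\eta)$, quotients out the finite-dimensional null space ($l=0,1$), and invokes the quasilinear centre-manifold machinery of Simonett and Lunardi to conclude exponential convergence. Your energy--Poincar\'e--Gronwall route is more hands-on and avoids the dynamical-systems blackbox, and your algebra (the integrated Simons' identity $\intM{\norm{\nabla A^{o}}^{2}}=\tfrac12\intM{\norm{\nabla H}^{2}}-\tfrac12\intM{H^{2}\norm{A^{o}}^{2}}+\intM{\norm{A^{o}}^{4}}$, the chained spectral estimate $\intM{\norm{\nabla\Delta H}^{2}}\geq\lambda_{1}^{2}\intM{\norm{\nabla H}^{2}}$, and the absorption using $H^{2}\geq c_{1}$ and $\llll{A^{o}}_{\infty}$ small) is correct, so the strategy is viable in principle. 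It is noteworthy that the $l=0,1$ zero modes of $\mathcal{L}$, which the paper must quotient out, are invisible to your Poincar\'e chain because they correspond to first-order umbilic perturbations with $A^{o}=0$, so the chain is consistent with the spectral picture.

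However, there are genuine gaps. First, the assertion ``In particular $\Sigma_{t}$ becomes and remains $C^{2}$-close to a fixed round sphere'' is not a consequence of $\llll{A^{o}}_{\infty}\to0$ alone: one needs the volume constraint to pin the radius and an accumulation argument (every $t_{j}\nearrow\infty$ has a subsubsequence converging via Theorem \ref{BlowupTheorem1} to a $\rho_{\infty}$-sphere, hence the whole family eventually lies in any $C^{k}$-neighbourhood of the round metric) before one can invoke spectral continuity to get $\lambda_{1}(\Sigma_{t})\geq c_{2}>0$ and $H^{2}\geq c_{1}>0$ uniformly. You flag this as ``the hardest point,'' correctly, but the sketch you give there does not actually establish it. Second, Theorem \ref{EvolutionTheorem1} requires $p>n=2$, so it cannot be applied directly with $p=2$ as your bootstrap suggests; the paper's own use of it routes through $p=4$ and then Lemma \ref{Tracefreelemma2}-type control of the quartic terms, and your interpolation step for $\llll{\nabla_{(k)}A}_{\infty}$ needs the same adjustment. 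Third, elliptic interpolation from $\vn{A^{o}}_{2}$ to $\vn{\nabla_{(k)}A^{o}}_{2}$ produces rates $\xi_{k}\searrow0$, whereas the statement asserts a single $\xi$; to recover a uniform rate you should instead use parabolic smoothing (integrate the local energy inequality of Proposition \ref{EvolutionProposition5} on unit time intervals $[t-1,t]$ and feed in the exponentially small $\vn{A^{o}}_{2}^{2}(t-1)$), or simply, once eventual $C^{\infty}$-closeness to the sphere is established, fall back on the paper's linearised stability step to get the sharp uniform rate.
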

\begin{proof}
Because the flow \eqref{trilaplacianIntro1} preserves volume, the radius of the limiting sphere is
\[
\rho_{\infty}=\sqrt[3]{\frac{3 \mbox{Vol}\oo{\Sigma_{0} }}{4\pi}}.
\]
By continuity, there is a time after which $f\left( \cdot, t\right)$ remains a radial graph; we linearise the radial graph evolution corresponding to \eqref{trilaplacianIntro1} over the limiting stationary immersion and apply the principle of linearised stability, along with a result of Lunardi \cite{Lunardi1} to obtain exponential convergence to the sphere.
Specifically, for  $t$ sufficiently large, we write our surface as a radial graph over the unit $2-$sphere:
\[
f\oo{z,t}=\rho\oo{z,t}z,
\]
for $z\in\mathbb{S}^{2}$, adding a tangential diffeomorphism to
\eqref{trilaplacianIntro1} such that the parametrisation is preserved. The
components of the induced metric $g$ are
\[
g_{ij}=\rho^{2}\sigma_{ij}+\snabla_{i}\rho\snabla_{j}\rho.
\]
Here $\sigma$ is the metric on $\mathbb{S}^{2}$. We have used the identities
$z\perp\snabla_{i}z$ and $\norm{z}^{2}=1$.
We compute (see \cite{McCoy1} for details)
\[
A_{ij}=-\Phi\oo{\rho}^{-\frac{1}{2}}\oo{\rho\snabla_{ij}\rho-2\snabla_{i}\rho\snabla_{j}\rho-\sigma_{ij}\rho^{2}} \mbox{ and } g^{ij}=\rho^{-2}\oo{\sigma^{ij}-\Phi\oo{\rho}^{-1}\snabla^{i}\rho\snabla^{j}\rho},
\]
where
\[
\Phi\oo{\rho}:=\rho^{2}+\norm{\snabla\rho}^{2}
\]
and $\sDelta$ denotes the Laplace-Beltrami operator on $\mathbb{S}^{2}$. Hence
\begin{equation}
H\oo{\rho}=-\rho^{-1}\Phi\oo{\rho}^{-\frac{1}{2}}\sDelta\rho+\rho^{-1}\Phi\oo{\rho}^{-\frac{3}{2}}\snabla^{i}\rho\snabla^{j}\rho\snabla_{ij}\rho+2\Phi\oo{\rho}^{-\frac{1}{2}}+\Phi\oo{\rho}^{-\frac{3}{2}}\norm{\snabla\rho}^{2},\label{1}
\end{equation}
Now consider a variation of $\rho$ centred around the stationary spherical solution. That is, 
\begin{equation}
\rho\mapsto\rho_{\epsilon}=\rho_{\infty}+\epsilon\eta.\label{2}
\end{equation}
Taking repeated covariant derivatives of our expression for the mean curvature $H$ in $\oo{\ref{1}}$ yields
\begin{equation}
\Delta^{2}H\oo{\rho_{\epsilon}}=-\rho_{\epsilon}^{-4}\oo{\rho_{\epsilon}^{-1}\Phi\oo{\rho_{\epsilon}}^{-\frac{1}{2}}\sDelta^{3}\rho_{\epsilon}+\Phi\oo{\rho_{\epsilon}}^{-\frac{3}{2}}\sDelta^{2}\Phi\oo{\rho_{\epsilon}
}}+Q(\rho_\epsilon,\eta,\epsilon)\,,\label{3}
\end{equation}
where $Q$ satisfies $\frac{d}{d\epsilon}Q\Big|_{\epsilon=0}=0$.
Hence
\begin{align*}
\frac{d}{d\epsilon}\Delta^{2}H\oo{\rho_{\epsilon}}\Big|_{\epsilon=0}&=-\frac{d}{d\epsilon}\rho_{\epsilon}^{-4}\oo{\rho_{\epsilon}^{-1}\Phi\oo{\rho_{\epsilon}}^{-\frac{1}{2}}\sDelta^{3}\rho_{\epsilon}+\Phi\oo{\rho_{\epsilon}}^{-\frac{3}{2}}\sDelta^{2}\Phi\oo{\rho_{\epsilon} }}\Big|_{\epsilon=0}\\
&=-\rho_{\epsilon}^{-5}\Phi\oo{\rho_{\epsilon}}^{-\frac{1}{2}}\sDelta^{3}\eta\Big|_{\epsilon=0}-2\rho_{\epsilon}^{-3}\Phi\oo{\rho_{\epsilon}}^{-\frac{3}{2}}\sDelta^{2}\eta\Big|_{\epsilon=0}\\
&=-\rho_{\infty}^{-6}\oo{\sDelta^{3}\eta+2\sDelta^{2}\eta}.
\end{align*}
Hence the linearisation of \eqref{trilaplacianIntro1} about the stationary sphere solution with radius $\rho_{\infty}$ is
\begin{equation}
\frac{\partial\eta}{\partial t}=\rho_{\infty}^{-6}\oo{\sDelta^{3}\eta+2\sDelta^{2}\eta}=:\mathcal{L}\eta.\label{4}
\end{equation}
It is well-known (see \cite{Chavel} for example) that the set of eigenvalues of
the Laplacian $\sDelta$ on $\mathbb{S}^{2}$ is
\[
\sigma\oo{\sDelta}=\left\{\lambda_{l}:l\in\mathbb{N}_{0}\right\}=\left\{-l\oo{l+1}:l\in\mathbb{N}_{0}\right\}\subset\left(-\infty,0\right],
\]
with the algebraic multiplicity of each eigenvalue $\lambda_{l}$ being equal to
the dimension of the space of homogenous, harmonic polynomials of degree $l$ on
the sphere.
In particular, the multiplicities of $\lambda_{0}$ and $\lambda_{1}$ are $1$
and $3$ respectively. It is also well-known that the eigenvalue of the
$p$-times iterated Laplacian $\sDelta^{p}$ ($p\in\mathbb{N}$), is given by
\begin{equation*}
\sigma\oo{\sDelta^{p}}=\left\{\lambda_{l}^{p}:\lambda_{l}\in\sigma\oo{\sDelta}\right\}.
\end{equation*}
Hence it follows that the set of eigenvalues of the operator $\mathcal{L}$ in \eqref{4} is
\begin{equation*}
\sigma\oo{\mathcal{L}}=\left\{-\rho_{\infty}^{-6}\oo{l+2}\oo{l+1}^{2}l^{2}\oo{l-1}:l\in\mathbb{N}_{0}\right\}.\label{5}
\end{equation*}
The zero eigenvalue (corresponding to $l=0,1$) of $\mathcal{L}$ has algebraic
multiplicity $4$.
One may follow a set of steps completely analogous to \cite[pp. 1428--1430]{Escher1},
quotienting out the zero eigenvalues and proving that \cite{Simonett1} can be
applied to conclude exponentially fast convergence to $\rho_\infty$ in the
$C^{\infty}$ topology.
Converting the norms on the derivatives of our immersion function into
covariant derivatives of curvature allows us to establish the desired
exponential convergence result of the theorem.
\end{proof}

\end{section}

\end{document}